\newcommand*{\mailto}[1]{\href{mailto:#1}{\nolinkurl{#1}}}
\newcommand{\arxiv}[1]{\href{http://arxiv.org/abs/#1}{arXiv:#1}}
\newcommand{\msc}[1]{\href{http://www.ams.org/msc/msc2010.html?t=&s=#1}{#1}}
\newtheorem{theorem}{Theorem}[section]
\newtheorem{lemma}[theorem]{Lemma}
\newtheorem{corollary}[theorem]{Corollary}
\newtheorem{remark}[theorem]{Remark}
\newcommand{\id}{{\mathbbm 1}}
\newcommand{\R}{{\mathbb R}}
\newcommand{\Z}{{\mathbb Z}}
\newcommand{\C}{{\mathbb C}}
\newcommand{\Qr}{\mathsf{q}}
\newcommand{\PPr}{\mathsf{p}}
\newcommand{\Vr}{\mathsf{v}}
\newcommand{\Wr}{\mathsf{w}}
\newcommand{\I}{\mathrm{i}}
\newcommand{\tr}{\mathrm{tr}}
\newcommand{\supp}{\mathrm{supp}}
\newcommand{\loc}{\mathrm{loc}}
\newcommand{\cc}{\mathrm{c}}
\newcommand{\sign}{\mathrm{sgn}}
\newcommand{\be}{\begin{equation}}
\newcommand{\ee}{\end{equation}}
\newcommand{\ti}{\tilde}
\newcommand{\wt}{\widetilde}
\newcommand{\OO}{\mathcal{O}}
\newcommand{\cM}{\mathcal{M}}
\newcommand{\ledot}{\,\cdot\,}
\newcommand{\redot}{\cdot\,}
\newcommand{\NL}{(0-)}
\newcommand{\NLz}{(z,0-)}
\newcommand{\dip}{\upsilon}
\newcommand{\cL}{\mathcal{L}}
\numberwithin{equation}{section}
\begin{document}

\title[The Moment Problem and Indefinite Strings]{The Classical Moment Problem\\ and  Generalized Indefinite Strings}

\author[J.\ Eckhardt]{Jonathan Eckhardt}
\address{Faculty of Mathematics\\ University of Vienna\\ 
Oskar-Morgenstern-Platz 1\\ 1090 Wien\\ Austria}
\email{\mailto{Jonathan.Eckhardt@univie.ac.at}}
\urladdr{\url{http://www.mat.univie.ac.at/jonathan.eckhardt/}}

\author[A.\ Kostenko]{Aleksey Kostenko}
\address{Faculty of Mathematics and Physics\\ University of Ljubljana\\ Jadranska ul.\ 19\\ 1000 Ljubljana\\ Slovenia\\ and Faculty of Mathematics\\ University of Vienna\\ 
Oskar-Morgenstern-Platz 1\\ 1090 Wien\\ Austria\\ and RUDN University\\ Miklukho-Maklaya Str. 6\\ 
117198 Moscow\\Russia}
\email{\mailto{Aleksey.Kostenko@fmf.uni-lj.si};\ \mailto{Oleksiy.Kostenko@univie.ac.at}}
\urladdr{\url{http://www.mat.univie.ac.at/~kostenko/}}

\thanks{{\it Research supported by the Austrian Science Fund (FWF) under Grants No.\ P29299 (J.E.) and P28807 (A.K.) as well as by the Ministry of Education and Science of the Russian Federation under grant No.~02.A03.21.0008 (A.K.).}}

\keywords{Hamburger moment problem, generalized indefinite strings}
\subjclass[2010]{\msc{44A60}, \msc{34L05}; Secondary \msc{34B20}, \msc{34B07}}

\begin{abstract}
We show that the classical Hamburger moment problem can be included in the spectral theory of generalized indefinite strings. Namely, we introduce the class of Krein--Langer strings and show that there is a bijective correspondence between moment sequences and this class of generalized indefinite strings. This result can be viewed as a complement to the classical results of M.\ G.\ Krein on the connection between the Stieltjes moment problem and Krein--Stieltjes strings and I.\ S.\ Kac on the connection between the Hamburger moment problem and $2\times 2$ canonical systems with Hamburger Hamiltonians.
\end{abstract}

\maketitle

\section{Introduction}

Let $\{s_k\}_{k\ge 0}$ be a sequence of real numbers. 
 The classical {\em Hamburger moment problem} is to find a positive Borel measure $\rho$ on $\R$ such that the numbers $s_k$ are its moments of order $k$, that is, such that 
\be\label{eq:Hamburger}
s_k=\int_{\R} \lambda^k\, \rho(d†\lambda),\quad k\ge 0.
\ee
 Every positive Borel measure $\rho$ on $\R$ that satisfies \eqref{eq:Hamburger}, is called a solution of the Hamburger moment problem with data $\{s_k\}_{k\ge 0}$. 
 Similarly, the {\em Stieltjes moment problem} is to find a positive Borel measure $\rho$ on $\R_{\ge 0}$ such that the numbers $s_k$ are its moments of order $k$, that is, such that 
\be\label{eq:Stieltjes}
s_k=\int_{\R_{\ge 0}} \lambda^k\, \rho(d\lambda),\quad k\ge0.
\ee
There are two principal questions: 
\begin{enumerate}[label=(\roman*), ref=(\roman*), leftmargin=*, widest=iii]
\item
For which sequences $\{s_k\}_{k\ge 0}$ are the moment problems solvable? 
\item
 Are solutions unique? If not, how to describe the set of all solutions?
\end{enumerate}
We are neither going to provide comprehensive historical details nor a complete discussion of solutions to both of these problems here. 
Instead, let us only refer to the book by N.\ I.\ Akhiezer \cite{Akh} (see also \cite{sim98}). 

It is widely known that the Hamburger moment problem is closely connected with the spectral theory of symmetric Jacobi (tri-diagonal) matrices. 
On the other side, it was discovered by M.\ G.\ Krein \cite{kr52} that the Stieltjes moment problem is closely connected with the  spectral theory of strings (now known as Krein strings), that is, spectral problems of the form
\be\label{eq:KSP}
 - f'' = z f \omega 
\ee
on an interval $[0,L)$, 
where $L\in (0,\infty]$ and $\omega$ is a positive Borel measure on $[0,L)$. 
The quantities $L$ and $\omega$ are usually referred to as the length and the mass density of the string, respectively. 
Both objects, Jacobi matrices and Krein strings, serve as certain canonical models for operators with simple spectra (for a nice account on canonical representations of self-adjoint operators we refer to a lecture by M.\ G.\ Krein \cite{krIII}). 
Another such model for operators with simple spectra is a $2\times 2$ canonical system \cite{ka83, wi14} and it was shown by I.\ S.\ Kac \cite{ka99a,ka99b} that the Hamburger moment problem can be included in the spectral theory of canonical systems with a special class of Hamiltonian functions termed {\em Hamburger Hamiltonians}. 

Motivated by the study of the indefinite moment problem, in \cite{krla79} (see also \cite{la76} and \cite{lawi98}) M.\ G.\ Krein and H.\ Langer introduced a new kind of spectral problem of the form
\be\label{eq:ISP}
 - f'' = z f \omega + z^2 f \dip
\ee
on an interval $[0,L)$, in which the spectral parameter enters in a nonlinear way. Here, the coefficient $\omega$ is a real-valued Borel measure on $[0,L)$ and $\dip$ is a positive Borel measure on $[0,L)$ supported on finitely many points. 
It turned out that spectral problems of the form \eqref{eq:ISP} also serve as a canonical model for operators with simple spectrum. 
More precisely, it was shown in \cite{IndefiniteString} that there is a one-to-one correspondence between spectral problems \eqref{eq:ISP} and canonical systems. 
In particular, this entails that every Herglotz--Nevanlinna function can be identified with the Weyl--Titchmarsh function of a unique spectral problem \eqref{eq:ISP}, however, for this, the assumptions on the coefficients have to be relaxed to allow $\omega$ to be a real-valued distribution in $H^{-1}_{\loc}([0,L))$ and $\dip$ to be a positive Borel measure on $[0,L)$. 
Similarly to Krein strings, we shall call such a triple $(L,\omega,\dip)$ a {\em generalized indefinite string}; see \cite{IndefiniteString}. 
In this respect, let us also mention briefly that a lot of the interest in spectral problems of the form~\eqref{eq:ISP} stems from the fact that they arise as isospectral problems for the conservative Camassa--Holm flow \cite{chlizh06, coiv08, ConservCH, CH2Real, ConservMP, CHString, hoiv11}. 

Our main aim here is to establish a connection between the Hamburger moment problem and the spectral theory of  generalized indefinite strings. More precisely, we will show that there is a one-to-one correspondence between moment sequences and a special class of generalized indefinite strings (which we decided to call {\em Krein--Langer strings}). This can be done in various ways. For example, one can use the results of I.\ S.\ Kac \cite{ka99a,ka99b} in conjunction with the correspondence between canonical systems and generalized indefinite strings (see Appendix \ref{sec:appA}). On the other hand, one can also prove this result by identifying moment sequences with (formal) continued fractions of the form  
\be\label{eq:StiCFmodinfinite}
  \cfrac{1}{-l_0\,z +  \cfrac{1}{\omega_0 + \dip_0\,z + \cfrac{1}{-l_1\,z + \cfrac{1}{\omega_1+\dip_1\,z+\cfrac{1}{\,\ddots\,}}}}}\,,
\ee 
which is the approach that we will follow here.
Notice that this kind of continued fraction is a slight modification of the one studied by T.-J.\ Stieltjes in \cite{sti} and subsequently applied to solve the Stieltjes moment problem. 
In fact, this generalization allows one to deal with the full (Hamburger) moment problem (as an alternative to employing the continued fractions used by H.\ Hamburger \cite{ha20}).
Continued fractions of the form~\eqref{eq:StiCFmodinfinite} naturally lead to spectral problems of the form~\eqref{eq:ISP} with coefficients $\omega$ and $\dip$ supported on a discrete set; Krein--Langer strings.  

A significant part of this article is of preliminary character. In Sections~\ref{sec:02} and~\ref{sec:03}, we first collect basic notions and  facts on Hamburger as well as Stieltjes moment problems and describe their respective relations to Jacobi matrices and Krein strings. Section~\ref{sec:04} then contains necessary information on canonical systems, Hamburger Hamiltonians and their connection with the Hamburger moment problem.  
After these preparations, we proceed to introduce the class of Krein--Langer strings in Section~\ref{sec:06} and subsequently prove our main result, which establishes a one-to-one correspondence between moment sequences and Krein--Langer strings.   

{\bf Notation.}  
 For any $a\in \R$, we set $\R_{>a}:= (a,\infty)$ and $\R_{\ge a}:=[a,\infty)$ as well as $\Z_{>a}:= \Z\cap \R_{>a}$ and $\Z_{\ge a}:= \Z\cap \R_{\ge a}$.   Moreover,  we will denote the canonical basis in $\ell^2(\Z_{\ge a})$ with $\{e_k\}_{k\ge a}$.

 If $I\subset \R$ is an interval, then we let $\cM(I)$ be the set of all real-valued Borel measures on $I$ and $\cM^+(I)$ the set of all positive Borel measures on $I$. In particular, we will use $\delta_x \in \cM^+(I)$ for the Dirac delta measure centered at $x\in I$. Finally, we will denote the characteristic function of a set $\Omega\subset \R$ with $\id_{\Omega}$.

\section{The Hamburger moment problem and Jacobi matrices}\label{sec:02}

The moment sequence $\{s_k\}_{k\ge 0}$  is called {\em positive} ({\em strictly positive}) if the Hankel determinants 
\be\label{eq:Delta0}
\Delta_{0,n} := \begin{vmatrix} s_0 & s_1 & \dots &s_{n} \\ s_1 & s_2 & \dots & s_{n+1} \\ \vdots & \vdots & \ddots & \vdots \\ s_{n} & s_{n+1} & \dots & s_{2n} \end{vmatrix}
\ee
 are non-negative (positive) for all $n\ge 0$. 

\begin{theorem}[H.\ Hamburger]\label{th:hamburger}
 There is a solution $\rho\in \cM^+(\R)$ to the Hamburger moment problem \eqref{eq:Hamburger} if and only if the sequence $\{s_k\}_{k\ge 0}$ is positive. 
 \end{theorem}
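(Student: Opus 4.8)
The plan is to prove both directions separately, with the necessity of positivity being the easy observation and the sufficiency requiring the real work.

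\textbf{Necessity.} Suppose $\rho\in\cM^+(\R)$ solves \eqref{eq:Hamburger}. For any $(c_0,\dots,c_n)\in\R^{n+1}$ one has
\[
\sum_{j,k=0}^n c_j c_k\, s_{j+k} = \sum_{j,k=0}^n c_j c_k \int_\R \lambda^{j+k}\,\rho(d\lambda) = \int_\R \Bigl|\sum_{j=0}^n c_j \lambda^j\Bigr|^2 \rho(d\lambda)\ge 0,
\]
so the Hankel matrix $(s_{j+k})_{j,k=0}^n$ is positive semidefinite; in particular its determinant $\Delta_{0,n}$ is non-negative. Since $n$ was arbitrary, $\{s_k\}_{k\ge0}$ is positive. (One should be mildly careful that the integrals defining $s_k$ converge, but this is part of the hypothesis that $\rho$ is a solution.)

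\textbf{Sufficiency.} This is the substantive part. The natural route is the Hilbert-space/operator-theoretic construction. Assume $\{s_k\}_{k\ge0}$ is positive, i.e.\ all Hankel forms $S_n:=(s_{j+k})_{j,k=0}^n$ are positive semidefinite. On the space $\C[\lambda]$ of polynomials define the (possibly degenerate) inner product $\spr{\lambda^j}{\lambda^k}:=s_{j+k}$, extended sesquilinearly; positivity of the $S_n$ guarantees $\spr{p}{p}\ge 0$ for all $p$. Quotient out the null space $\{p:\spr{p}{p}=0\}$ (Cauchy--Schwarz shows it is a subspace) and complete to obtain a Hilbert space $\cH$. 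Multiplication by $\lambda$ descends to a symmetric, densely defined operator $A$ on $\cH$ (symmetry is immediate from $\spr{\lambda p}{q}=\spr{p}{\lambda q}$, both equal to the form applied to the shifted coefficients). Then $A$ admits a self-adjoint extension $\wt A$ — either on $\cH$ itself if the deficiency indices are $(0,0)$, or, if they are $(1,1)$, on a possibly larger space by von Neumann's theory (or simply pass to $\wt A = A^*$ restricted appropriately; in the indeterminate case a parameter's worth of extensions appears, which is exactly the non-uniqueness of item (ii) in the introduction). Let $E$ be the projection-valued spectral measure of $\wt A$ and set $\rho(\cdot):=\spr{E(\cdot)\mathbbm{1}}{\mathbbm{1}}$, where $\mathbbm{1}\in\cH$ is the class of the constant polynomial $1$. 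Then
\[
\int_\R \lambda^k\,\rho(d\lambda) = \spr{\wt A^{\,k}\mathbbm{1}}{\mathbbm{1}} = \spr{\lambda^k}{1} = s_k,
\]
using that powers of the constant $1$ stay in $\C[\lambda]\subset\cH$ and that $\wt A$ agrees with multiplication by $\lambda$ there. This exhibits a solution $\rho\in\cM^+(\R)$.

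\textbf{Main obstacle.} The delicate point is the spectral-theoretic step: $A$ need not be essentially self-adjoint, so one must either invoke von Neumann's extension theory (checking that the deficiency indices are equal, which holds because $A$ commutes with the natural complex conjugation on $\cH$ and is therefore real, or because $\overline{\lambda}=\lambda$), or equivalently pass through the associated Jacobi matrix obtained by Gram--Schmidt on $\{1,\lambda,\lambda^2,\dots\}$ and quote the corresponding result for Jacobi operators. One should also handle the degenerate case where the form has finite rank (only finitely many $\Delta_{0,n}$ nonzero), in which $\cH$ is finite-dimensional, $\wt A$ is a self-adjoint matrix, and $\rho$ is a finitely-supported atomic measure — this case is easy but must not be forgotten. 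All remaining verifications (that the quotient is well-defined, that $\wt A^k\mathbbm{1}$ lies in the correct domain so the moment identity is legitimate) are routine.
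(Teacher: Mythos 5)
The paper does not prove Theorem~\ref{th:hamburger} at all: it is quoted as a classical result and delegated to \cite{Akh}. Your argument is the standard operator-theoretic proof and is essentially correct. Necessity via positive semidefiniteness of the Hankel forms is fine; for sufficiency, the GNS-type construction, the equality of deficiency indices because the symmetric operator commutes with complex conjugation, the existence of a self-adjoint extension, and the identity $\int_\R \lambda^k\,\rho(d\lambda)=\spr{\wt A^k\id}{\id}=s_k$ (legitimate because $A\subset\wt A$ gives $\dom(A^k)\subset\dom(\wt A^k)$) all go through, and you correctly isolate the finite-rank case. Note, though, that this is a genuinely different route from the one the surrounding text of the paper suggests: the machinery of Section~\ref{sec:02} (orthogonal polynomials, the asymptotics \eqref{eq:assPade}, and the limits \eqref{eq:WTjacobi}--\eqref{eq:WTjacobiIntRep}) produces a solution as the measure in the Herglotz representation of a (subsequential) limit of the rational functions $m_n$, i.e.\ via a compactness/Helly-selection argument; that approach is more elementary in that it avoids unbounded self-adjoint extension theory, while yours makes the determinate/indeterminate dichotomy (deficiency indices $(0,0)$ versus $(1,1)$) structurally transparent. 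One small remark on the $(1,1)$ case: since the indices are equal, self-adjoint extensions already exist in $\cH$ itself, so the appeal to a larger space is unnecessary.

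One point does deserve explicit care. The paper defines positivity of $\{s_k\}_{k\ge0}$ through non-negativity of the determinants \eqref{eq:Delta0}, whereas your sufficiency proof opens with ``positive, i.e.\ all Hankel forms $S_n$ are positive semidefinite.'' For strictly positive sequences the two are equivalent by Sylvester's criterion, but in the degenerate case non-negativity of all $\Delta_{0,n}$ does not by itself imply positive semidefiniteness of the forms (a symmetric matrix whose leading principal minors are merely non-negative need not be positive semidefinite), so the ``i.e.'' is not automatic. Your construction genuinely uses form positivity, so you should either adopt the convention of \cite{Akh}, where positivity is defined via the quadratic forms, or invoke the additional structure of degenerate moment sequences described in Remark~\ref{rem:positive} to bridge the two formulations; as written, this identification is the only unjustified step in an otherwise complete argument.
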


If a moment problem has a unique solution, then it is called {\em determinate}. Other\-wise it is called {\em indeterminate} and there are infinitely many solutions.

\begin{remark}\label{rem:positive}
 If the moment sequence $\{s_k\}_{k\ge 0}$ is positive but not strictly positive, then the Hamburger moment problem \eqref{eq:Hamburger} is determinate. 
 The unique solution $\rho$ is then supported on a finite set and one has $\Delta_{0,n}>0$ for all $n\in \{0,\dots, N-1\}$, where $N=\#\supp(\rho)$, and 
$\Delta_{0,n} = 0$ for all $n\ge N$.
\end{remark}

Suppose now that $\{s_{k}\}_{k\ge 0}$ is a strictly positive sequence and that $\rho$ is a solution to the Hamburger moment problem \eqref{eq:Hamburger}. Without loss of generality, we can assume that $s_0=1$, which means that $\rho$ is a probability measure. First, let us define the polynomials of the first kind associated with the measure $\rho$:
\begin{align}\label{eq:P_n}
P_0(z) & \equiv \frac{1}{\sqrt{s_0}}=1, & P_n(z) & := \frac{1}{\sqrt{\Delta_{0,n-1}\Delta_{0,n}}} 
\begin{vmatrix} 
s_0 & s_1 & \dots &s_{n} \\ 
s_1 & s_2 & \dots & s_{n+1} \\ 
\vdots & \vdots & \ddots & \vdots \\ 
s_{n-1} & s_n & \dots & s_{2n-1} \\
1 & z &  \dots & z^n \end{vmatrix},\quad n \geq 1.
\end{align}
Clearly, we have the asymptotics 
\be
P_n(z) = \sqrt{\frac{\Delta_{0,n-1}}{\Delta_{0,n}}} z^n - \frac{\Delta_{0,n-1}'}{\sqrt{\Delta_{0,n-1}\Delta_{0,n}}} z^{n-1} +\OO(z^{n-2}), \qquad z\rightarrow\infty. 
\ee
Here we set $\Delta_{0,-1}:=1$, $\Delta_{0,-1}':=0$, $ \Delta_{0,0}':=s_1$, and 
\be \label{eq:Delta0'}
\Delta_{0,n}':= \begin{vmatrix} 
s_0 & s_1 & \dots &s_{n-1} & s_{n+1} \\ 
s_1 & s_2 & \dots & s_{n} & s_{n+2} \\ 
\vdots & \vdots & \ddots & \vdots & \vdots\\ 
s_{n-1} & s_n & \dots & s_{2n-2} & s_{2n}\\
s_n & s_{n+1} &  \dots & s_{2n-1} & s_{2n+1} \end{vmatrix},\quad n\ge 1.
\ee 
The family $\{P_n\}_{n\ge 0}$ is orthonormal with respect to the measure $\rho$, however, it does not necessarily form a basis in $L^2(\R;\rho)$ (if the moment sequence is positive but not strictly positive, \eqref{eq:P_n} allows to define exactly $N$ polynomials $\{P_n\}_{n=0}^{N-1}$ and these polynomials serve as an orthonormal basis in $L^2(\R;\rho)$). Moreover, the polynomials $P_n$ satisfy the three-term recurrence relations
\be\label{eq:rec1}
b_{n-1} P_{n-1}(z) + a_n P_n(z) + b_{n}P_{n+1}(z) = z P_n(z),\quad n\ge 0,
\ee   
upon setting $P_{-1}\equiv 0$ for notational simplicity. Hereby, the coefficients in \eqref{eq:rec1} are given by $b_{-1}=0$ and 
\be\label{eq:b_n}
b_n=\int_{\R} \lambda P_n(\lambda) P_{n+1}(\lambda)\,\rho(d\lambda)= \frac{\sqrt{\Delta_{0,n-1}\Delta_{0,n+1}}}{\Delta_{0,n}},\quad n\ge 0, 
\ee
as well as by 
\be\label{eq:a_n}
 a_n = \int_{\R}\lambda P_n(\lambda)^2\,\rho(d\lambda)= \frac{\Delta_{0,n}'}{\Delta_{0,n}} - \frac{\Delta_{0,n-1}'}{\Delta_{0,n-1}},\quad n\ge 0.
\ee
The recurrence relations \eqref{eq:rec1} naturally generate the following Jacobi (tri-diagonal) matrix
\be\label{eq:Jacobi}
J:=J(a,b)=\begin{pmatrix} a_0 & b_0 & 0 & \dots \\ b_0 & a_1 & b_1 & \ddots \\ 0 & b_1 & a_2 & \ddots \\ \vdots & \ddots & \ddots & \ddots \end{pmatrix},
\ee
which defines a minimal (closed) symmetric operator in $\ell^2(\Z_{\ge 0})$. This operator is either self-adjoint or has deficiency indices $(1,1)$. In the former case, the matrix is said to be in the {\em limit point case} and in the latter it is said to be in the {\em limit circle case}. The next result is well known (see \cite{Akh} for example).

\begin{theorem}\label{th:HambCorresp}
The map 
\be\label{eq:PsiJ}
\Psi_{J}\colon \{s_k\}_{k\ge 0} \mapsto J(a,b),
\ee
 where $J(a,b)$ is the Jacobi matrix \eqref{eq:Jacobi} with coefficients defined by \eqref{eq:b_n} and \eqref{eq:a_n}, establishes a one-to-one correspondence between the set of strictly positive sequences with $s_0=1$ and the set of semi-infinite symmetric Jacobi matrices normalized by the condition $b_n>0$ for all $n\ge 0$. 
\end{theorem}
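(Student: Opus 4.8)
The plan is to establish the two directions of the claimed bijection separately, and the key observation is that $\Psi_J$ already has an explicit candidate inverse via orthonormal polynomials. Concretely, given a normalized semi-infinite symmetric Jacobi matrix $J(a,b)$ with $b_n>0$ for all $n\ge 0$, one defines polynomials $P_n$ recursively by $P_{-1}\equiv 0$, $P_0\equiv 1$, and the three-term recurrence \eqref{eq:rec1}; an induction on $n$ shows that $P_n$ has degree exactly $n$ with positive leading coefficient (here the hypothesis $b_n>0$ is used). Since $J$ is symmetric with deficiency indices $(1,1)$ or $(0,0)$, it admits at least one self-adjoint extension $\wt J$ in $\ell^2(\Z_{\ge 0})$; setting $\rho$ to be the spectral measure of $\wt J$ associated with the vector $e_0$, one has $s_k:=\int_\R \lambda^k\,\rho(d\lambda)=\spr{\wt J^k e_0}{e_0}$, which is finite (as $e_0$ lies in the domain of every power of $\wt J$, $e_0$ being a vector with finitely many nonzero entries acted on by a tridiagonal matrix) and clearly strictly positive with $s_0=1$. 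This defines a map $\Phi$ from normalized Jacobi matrices to strictly positive sequences with $s_0=1$.

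The core of the proof is then to verify $\Phi\circ\Psi_J=\mathrm{id}$ and $\Psi_J\circ\Phi=\mathrm{id}$. For $\Phi\circ\Psi_J=\mathrm{id}$: starting from a strictly positive sequence $\{s_k\}$ with $s_0=1$, pick any solution $\rho$ (which exists by Theorem~\ref{th:hamburger}); the polynomials $P_n$ from \eqref{eq:P_n} are orthonormal in $L^2(\R;\rho)$ and satisfy \eqref{eq:rec1} with the coefficients \eqref{eq:b_n}, \eqref{eq:a_n}. The subtlety is that $\rho$ need not be the spectral measure of the associated operator, and in the indeterminate case it is not unique; however, $\int_\R\lambda^k\,\rho(d\lambda)$ equals $\int_\R \lambda^k\,\mu_0(d\lambda)$ for every N-extremal solution and in particular is determined by the $P_n$'s — more directly, one computes $\spr{J^k e_0}{e_0}$ by expanding in the recurrence and checks it reproduces $s_k$, using that $J^k e_0=\sum_j c_{k,j}e_j$ with the same coefficients that appear when expanding $z^k P_0$ in the basis $\{P_n\}$, whence $s_k=\int z^k P_0\,\overline{P_0}\,d\rho$ matches. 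For $\Psi_J\circ\Phi=\mathrm{id}$: one must recover $(a,b)$ from the moments $s_k=\spr{\wt J^k e_0}{e_0}$ via \eqref{eq:b_n}--\eqref{eq:a_n}. This follows because the Gram–Schmidt orthonormalization of $1,\lambda,\lambda^2,\dots$ in $L^2(\R;\rho)$ produces exactly the $P_n$ (the determinantal formulas \eqref{eq:P_n} are precisely Gram–Schmidt), and the multiplication operator by $\lambda$ in this basis has matrix $J(a,b)$ with the stated coefficients; since multiplication by $\lambda$ on the cyclic subspace generated by $e_0$ under $\wt J$ is unitarily equivalent (via $e_k\mapsto P_k$) to multiplication by $\lambda$ on $L^2(\R;\rho)$, the recovered matrix is the original $J$.

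I would then note the standard bookkeeping points: that the Hankel determinants $\Delta_{0,n}$ are strictly positive exactly when $\rho$ has infinite support, so that \emph{all} coefficients $a_n,b_n$ are well-defined and finite; that $s_0=1$ corresponds to the normalization $P_0\equiv 1$ and is what pins down $\rho$ as a probability measure; and that the condition $b_n>0$ on the Jacobi side is the exact image of the sign convention built into \eqref{eq:P_n} (positive leading coefficients), so the correspondence is genuinely onto the normalized matrices and not merely into them. The main obstacle, and the place requiring real care rather than routine computation, is the indeterminate case: one must argue that although the solution $\rho$ to the moment problem is non-unique, the Jacobi matrix $J(a,b)$ produced by \eqref{eq:b_n}--\eqref{eq:a_n} depends only on the moments $\{s_k\}$ and not on the chosen $\rho$ — equivalently, that the determinantal formulas \eqref{eq:P_n} for the coefficients, which manifestly involve only the $s_k$, agree with the integral formulas for every solution $\rho$. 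This is a consequence of the fact that $\int_\R \lambda^j P_n(\lambda)P_m(\lambda)\,\rho(d\lambda)$ for $j+ n+m\le 2\max(n,m)$ is a fixed polynomial in $s_0,\dots,s_{2\max(n,m)}$, so once one checks the identity on, say, the Friedrichs (N-extremal) solution, it holds universally. For the write-up I would simply cite \cite{Akh} for this classical fact and keep the argument at the level of the plan above. All of this is standard and I would present it compactly, with \cite{Akh} carrying the details of the indeterminate case.
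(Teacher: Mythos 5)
The paper gives no proof of this theorem---it is stated as well known with a reference to \cite{Akh}---and your argument is exactly the classical one from that source: the formulas \eqref{eq:b_n}--\eqref{eq:a_n} (whose determinantal forms show the coefficients depend only on the moments, settling the indeterminate-case worry directly) define the forward map, while the spectral measure of a self-adjoint extension of $J$ together with the isometry $e_n\mapsto P_n$ onto the closure of the polynomials in $L^2(\R;\rho)$ furnishes the inverse and the two composition identities. Your plan is correct; the only step stated too quickly is that the sequence $s_k=\spr{\wt J^k e_0}{e_0}$ is \emph{strictly} positive, which requires the one-line observation that $b_n>0$ forces $p(J)e_0$ to have a nonzero component on $e_{\deg p}$ for every nonzero polynomial $p$, whence $\int_\R |p|^2\,d\rho=\|p(\wt J)e_0\|^2>0$ and all Hankel determinants are positive.
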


\begin{remark}\label{rem:scaling}
A few remarks are in order.
\begin{enumerate}[label=(\roman*), ref=(\roman*), leftmargin=*, widest=iii]
\item An analog of Theorem \ref{th:HambCorresp} for positive moment sequences which are not strictly positive also holds true. 
\item Clearly, if $\{s_k\}_{k\ge 0}$ is a positive moment sequence  and $c>0$, then the new sequence $\{\ti{s}_k\}_{k\ge 0}$ with $\ti{s}_k:=cs_k$ for all $k\ge 0$ is  positive as well. Moreover, it follows readily that $\ti{a}_n = a_n$ and $\ti{b}_n=b_n$ for all $n\ge 0$ in this case.
\end{enumerate}
\end{remark}

Let us next introduce the polynomials of the second kind:
\begin{align}\label{eq:Qn}
Q_0(z) & \equiv 0, 
 & Q_n(z) & :=\int_{\R} \frac{P_n(\lambda) - P_n(z)}{\lambda - z}\,\rho(d\lambda),\quad n\ge1.
\end{align}
Notice that the polynomials $Q_n$ do not actually depend on the choice of $\rho$ if the moment problem is indeterminate. Using \eqref{eq:P_n}, the polynomials $Q_n$ can be expressed via the moment sequence through 
\be\label{eq:QnMoments}
Q_n(z) =  \frac{1}{\sqrt{\Delta_{0,n-1}\Delta_{0,n}}} 
\begin{vmatrix} 
s_0 & s_1 & \dots &s_{n} \\ 
s_1 & s_2 & \dots & s_{n+1} \\ 
\vdots & \vdots & \ddots & \vdots \\ 
s_{n-1} & s_n & \dots & s_{2n-1} \\
R_{n,0}(z) & R_{n,1}(z) &  \dots & R_{n,n}(z)\end{vmatrix}, \quad n\ge 0, 
\ee
where $R_{n,0}\equiv 0$ and 
\be
R_{n,k}(z) = \sum_{m=0}^{k-1} s_{k-1-m}z^m
\ee
for all $k\in\{1,\dots,n\}$.  It is not difficult to check that
\be\label{eq:JacWronsk}
  P_n(z)Q_{n+1}(z) - P_{n+1}(z) Q_n(z) \equiv \frac{1}{b_n}, \quad n\ge 0. 
\ee

 One can characterize determinate Hamburger moment problems in terms of the Jacobi coefficients as well as the orthogonal polynomials (see \cite{Akh} for example).

\begin{theorem}\label{th:Akh}
Let $\{s_k\}_{k\ge 0}$ be a strictly positive sequence with $s_0=1$. Then the following conditions are equivalent:
\begin{enumerate}[label=(\roman*), ref=(\roman*), leftmargin=*, widest=iiii]
\item The Hamburger moment problem \eqref{eq:Hamburger} is determinate.
\item The Jacobi matrix $J(a,b)$ is in the limit point case.
\item The series $\sum_{n\ge 0} |P_n(0)|^2 + |Q_n(0)|^2$ diverges.
\item There exists $\lambda\in\R$ such that the series $\sum_{n\ge 0} |P_n(\lambda)|^2$ diverges.
\end{enumerate}
\end{theorem}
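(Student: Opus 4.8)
The plan is to establish all four equivalences by routing everything through condition~$(ii)$, using the Jacobi matrix $J=J(a,b)$ from Theorem~\ref{th:HambCorresp} as the bridge and relying on two classical ingredients as workhorses: the description of the solutions of \eqref{eq:Hamburger} via self-adjoint extensions of $J$, and Weyl's limit point/limit circle dichotomy for the three-term recurrence $(\tau u)_n=b_{n-1}u_{n-1}+a_nu_n+b_nu_{n+1}=zu_n$.

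For $(i)\Leftrightarrow(ii)$ I would simply appeal to Akhiezer's theory: the solutions $\rho\in\cM^+(\R)$ of \eqref{eq:Hamburger} are in bijection with the self-adjoint extensions of the minimal operator generated by $J$ in $\ell^2(\Z_{\ge 0})$ (the Cauchy transform $z\mapsto\int_{\R}(\lambda-z)^{-1}\rho(d\lambda)$ running through the associated Weyl functions; see \cite{Akh}). In the limit point case $J$ is essentially self-adjoint, the extension is unique and so is $\rho$, so the problem is determinate; in the limit circle case the deficiency indices are $(1,1)$, there is a one-parameter family of self-adjoint extensions whose spectral measures are pairwise distinct solutions, and the problem is indeterminate. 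I would not reproduce the Nevanlinna parametrization here.

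For the analytic part I would use two facts. First, by \eqref{eq:rec1} (with the convention $P_{-1}\equiv0$, so that the $n=0$ relation is the boundary relation $a_0u_0+b_0u_1=zu_0$) together with \eqref{eq:JacWronsk}, the sequences $(P_n(z))_{n\ge0}$ and $(Q_n(z))_{n\ge0}$ are solutions of $\tau u=zu$ with constant nonzero Wronskian, hence a fundamental system for each $z\in\C$. Second, Weyl's theorem: $J$ is in the limit point case iff for every $z\in\C$ some solution of $\tau u=zu$ is not in $\ell^2(\Z_{\ge 0})$, and in the limit circle case iff for every $z\in\C$ every solution lies in $\ell^2(\Z_{\ge 0})$ — in particular the alternative does not depend on $z$; see \cite{Akh}. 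Granting these, $(ii)\Rightarrow(iii)$ is immediate, since in the limit point case already at $z=0$ some solution of $\tau u=0$ fails to be square-summable, which is impossible if both $(P_n(0))_n$ and $(Q_n(0))_n$ lie in $\ell^2(\Z_{\ge 0})$. Conversely, if $J$ is in the limit circle case then, by Weyl's theorem applied at $z=0$, every solution of $\tau u=0$ is square-summable, in particular $(P_n(0))_n$ and $(Q_n(0))_n$, so the series in $(iii)$ converges; this gives $(iii)\Rightarrow(ii)$. Likewise, if $J$ is in the limit circle case then $(P_n(\lambda))_n\in\ell^2(\Z_{\ge 0})$ for every $\lambda\in\R$, which is $\neg(iv)$, so $(iv)\Rightarrow(ii)$. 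Finally, for $(ii)\Rightarrow(iv)$: in the limit point case $J$ is essentially self-adjoint, so for real $\lambda$ the space $\ker(J_{\max}-\lambda)$ equals the eigenspace of the self-adjoint realization; since $(P_n(\lambda))_n$ satisfies the full recurrence \emph{including} the boundary relation at $n=0$, it is square-summable if and only if $\lambda$ is an eigenvalue. As the point spectrum of a self-adjoint operator on a separable space is countable, one may pick $\lambda\in\R$ that is not an eigenvalue, and then $\sum_{n\ge0}|P_n(\lambda)|^2$ diverges, which is $(iv)$.

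The main obstacle lies entirely in the two inputs borrowed from \cite{Akh}. On the moment-problem side it is the equivalence $(i)\Leftrightarrow(ii)$ itself, i.e.\ the identification of \emph{all} solutions of \eqref{eq:Hamburger} with the self-adjoint extensions of $J$. On the analytic side it is the $z$-independence in Weyl's dichotomy — equivalently, that square-summability of \emph{every} solution of $\tau u=z_0u$ for a single $z_0\in\C$ forces the same for every $z$ — which one proves by representing $P_n(z)$ and $Q_n(z)$ through a variation-of-parameters formula in terms of $P_n(z_0)$, $Q_n(z_0)$ and a finite sum, and then closing a Gronwall-type estimate that is uniform on compact sets of $z$. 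Once these are in place, the manipulations above with \eqref{eq:rec1} and \eqref{eq:JacWronsk} are routine bookkeeping.
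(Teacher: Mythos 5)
The paper offers no proof of this theorem at all: it is stated as a classical fact with a pointer to Akhiezer's book, so there is no internal argument to compare yours against. Your sketch is the standard operator-theoretic proof and its overall structure is sound: routing $(iii)$ and $(iv)$ through the limit point/limit circle dichotomy, using that $P_\ledot(z)$ and $Q_\ledot(z)$ form a fundamental system of the difference equation (via the constant Wronskian \eqref{eq:JacWronsk}; note only that $Q$ satisfies the recurrence for $n\ge1$ and not the boundary relation at $n=0$, which is harmless since the Weyl alternative concerns the two-sided recurrence), invoking the $z$-independence of the alternative at $z=0$ for $(ii)\Leftrightarrow(iii)$, and using countability of the point spectrum of the unique self-adjoint realization for $(ii)\Rightarrow(iv)$; all of this is correct.

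The one point you should state more carefully is the classical input behind $(i)\Leftrightarrow(ii)$. It is \emph{not} true that the solutions of \eqref{eq:Hamburger} are in bijection with the self-adjoint extensions of the minimal Jacobi operator \emph{in} $\ell^2(\Z_{\ge0})$: in the indeterminate case those extensions produce only the von Neumann (N-extremal) solutions, a proper subfamily of the full Nevanlinna parametrization. This matters for the direction ``limit point $\Rightarrow$ determinate'': a priori a solution $\rho$ need not come from an extension inside $\ell^2(\Z_{\ge0})$; one realizes $\rho$ as the spectral measure, localized at the cyclic vector, of a self-adjoint extension acting in the possibly larger space $L^2(\R;\rho)$, and then uses that for an essentially self-adjoint operator all such exit-space extensions have the same compressed resolvent (Naimark), so that $\int_\R(\lambda-z)^{-1}\rho(d\lambda)$ is forced and Stieltjes inversion gives uniqueness. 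The converse direction is fine as you argue it, since distinct canonical extensions in the limit circle case do yield distinct solutions. With the cited correspondence corrected in this way (which is exactly the form it takes in \cite{Akh} and \cite{sim98}), your proof goes through.
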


 Theorem \ref{th:HambCorresp} above establishes a connection between strictly positive sequences $\{s_k\}_{k\ge 0}$ with $s_0=1$ and semi-infinite Jacobi matrices. 
 This correspondence can also be described in another important way. Upon denoting 
\be\label{eq:JacobiN}
J_n=J_n(a,b) := 
\begin{pmatrix} a_0 & b_0 & 0 & \dots & 0\\ b_0 & a_1 & b_1 & \ddots &\vdots \\ 0 & b_1 & \ddots & \ddots & 0 \\ \vdots & \ddots & \ddots & a_{n-1} & b_{n-1}\\ 0 & \dots & 0 & b_{n-1} & a_n \end{pmatrix}
\ee
for all $n\ge 0$, let us define the function $m_n$ on $\C\backslash\R$ by 
\be\label{eq:m_nJacobi}  
m_n(z) := -\frac{Q_n(z)}{P_n(z)} = \big((J_{n-1} - z)^{-1}e_0,e_0\big), \quad z\in\C\backslash\R.
\ee
The rational function $m_n$ is a Herglotz--Nevanlinna function, that is, it is analytic, maps the upper complex half-plane into the closure of the upper complex half-plane and satisfies the symmetry relation
 \begin{align}\label{eqnHNsym}
  m_n(z)^\ast = m_n(z^\ast), \quad z\in\C\backslash\R. 
 \end{align}
Let us mention in this context that \eqref{eq:a_n} implies the following identity
\be
 \tr\, J_n = \sum_{k=0}^n a_k = \frac{\Delta_{0,n}'}{\Delta_{0,n}}.
\ee
Furthermore, this trace is equal to the sum of the roots of the polynomial $P_{n+1}$. 

The function $m_n$ admits the following asymptotic expansion (see \cite[\S I.4]{Akh}) 
\be\label{eq:assPade}
m_n(z) = -\frac{Q_n(z)}{P_n(z)} = -\frac{s_0}{z} - \frac{s_1}{z^2} - \dots  - \frac{s_{2n-1}}{z^{2n}} + \OO(z^{-2n-1}), \qquad z\rightarrow\infty.
\ee
Since the polynomials of the first and the second kind satisfy the recurrence relations in \eqref{eq:rec1}, $m_n$ admits the continued fraction expansion \cite[\S I.4.2]{Akh}: 
\be\label{eq:CFjacobi}
m_n(z) = \cfrac{1}{a_0 - z - \cfrac{b_0^2}{a_1 -z - \cfrac{b_1^2}{\,\ddots\,-\cfrac{b_{n-2}^2}{a_{n-1}-z}}}}\,, \quad z\in\C\backslash\R.
\ee 
For this reason, it makes sense to identify any semi-infinite Jacobi matrix \eqref{eq:Jacobi} with  
the formal continued fraction 
\be\label{eq:CFjacobiInf}
\cfrac{1}{a_0 - z - \cfrac{b_0^2}{a_1 -z - \cfrac{b_1^2}{a_2 - z - \cfrac{b_2^2}{\ddots}}}}\, .
\ee 
Clearly, its $n$-th order convergent is precisely the rational function $m_n$ having the asymptotic expansion \eqref{eq:assPade}. 

It remains to notice that the following limit 
\be\label{eq:WTjacobi}
m(z) := \lim_{n\to \infty}m_n(z) = \lim_{n\to \infty} -\frac{Q_n(z)}{P_n(z)},
\ee
exists for all $z\in\C\backslash\R$ if the Hamburger moment problem \eqref{eq:Hamburger} is determinate. The function $m$ is a Herglotz--Nevanlinna function and called the {\em Weyl--Titchmarsh function} of the Jacobi matrix \eqref{eq:Jacobi}.  
 Otherwise, when the moment problem is indeterminate, one can always find a convergent subsequence (see \cite[Chapter II.1]{Akh})
 \be
  m(z) := \lim_{k\to \infty}m_{n_k}(z) = \lim_{k\to \infty} -\frac{Q_{n_k}(z)}{P_{n_k}(z)}, \quad z\in\C\backslash\R.
 \ee
In any case, the limit function $m$ admits an integral representation of the form
\be\label{eq:WTjacobiIntRep}
m(z) = \int_{\R} \frac{\rho_0(d\lambda)}{\lambda-z},\quad z\in\C\backslash\R,
\ee
for some finite positive Borel measure $\rho_0$ on $\R$. 
The asymptotic expansion in~\eqref{eq:assPade} entails that the measure $\rho_0$ obtained in this way solves the Hamburger moment problem \eqref{eq:Hamburger}.

\section{The Stieltjes moment problem and Krein--Stieltjes strings}\label{sec:03}

 A (strictly) positive sequence $\{s_k\}_{k\ge 0}$ is called {\em (strictly) double positive}\footnote{This terminology does not seem to be standard, however, see \cite{gp}.} if the sequence $\{s_{k+1}\}_{k\ge 0}$ is (strictly) positive as well. 
 Clearly, the sequence $\{s_{k+1}\}_{k\ge 0}$ is (strictly) positive if and only if the determinants
\be\label{eq:Delta1}
\Delta_{1,n} := \begin{vmatrix} s_1 & s_2 & \dots &s_{n} \\ s_2 & s_3 & \dots & s_{n+1} \\ \vdots & \vdots & \ddots & \vdots \\ s_{n} & s_{n+1} & \dots & s_{2n-1} \end{vmatrix}
\ee
are non-negative  (positive) for all $n\ge 1$. In the following, we will also set $\Delta_{1,0}:=1$ for notational simplicity.

\begin{theorem}[T.-J.\ Stieltjes]\label{th:stieltjes}
 There is a solution $\rho\in \cM^+(\R_{\ge 0})$ to the Stieltjes moment problem \eqref{eq:Stieltjes} if and only if the sequence $\{s_k\}_{k\ge 0}$ is double positive.
\end{theorem}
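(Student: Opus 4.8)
The plan is to reduce both directions of the equivalence to Hamburger's theorem (Theorem~\ref{th:hamburger}), which may be used freely. The necessity is immediate: if $\rho\in\cM^+(\R_{\ge0})$ solves \eqref{eq:Stieltjes}, then, extending $\rho$ by zero outside $\R_{\ge0}$, it is in particular a positive Borel measure on $\R$ solving \eqref{eq:Hamburger}, so $\{s_k\}_{k\ge0}$ is positive by Theorem~\ref{th:hamburger}. Moreover, since $\lambda\ge0$ on $\supp(\rho)$, the measure $\mu(d\lambda):=\lambda\,\rho(d\lambda)$ again lies in $\cM^+(\R)$ and has moments $\int_\R\lambda^k\,\mu(d\lambda)=s_{k+1}$; hence $\{s_{k+1}\}_{k\ge0}$ is positive, again by Theorem~\ref{th:hamburger}, and therefore $\{s_k\}_{k\ge0}$ is double positive.

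For the sufficiency the plan is to use the classical substitution $\lambda\mapsto\lambda^2$. Assuming $\{s_k\}_{k\ge0}$ double positive, define the auxiliary sequence $\{t_k\}_{k\ge0}$ by $t_{2k}:=s_k$ and $t_{2k+1}:=0$. The first step is to check that $\{t_k\}_{k\ge0}$ is positive: grouping the even-indexed rows and columns separately from the odd-indexed ones exhibits each Hankel matrix $(t_{i+j})_{0\le i,j\le n}$, after a simultaneous permutation of rows and columns, as a block-diagonal matrix whose two blocks are Hankel matrices formed from $\{s_k\}_{k\ge0}$ and from $\{s_{k+1}\}_{k\ge0}$; consequently the Hankel determinant $\Delta^{t}_{0,n}$ factors as a product of one determinant of the type $\Delta_{0,m}$ and one of the type $\Delta_{1,m}$ (with the convention $\Delta_{1,0}=1$), and these are all non-negative by hypothesis. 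By Theorem~\ref{th:hamburger} there is then a measure $\nu\in\cM^+(\R)$ with $\int_\R\lambda^k\,\nu(d\lambda)=t_k$ for all $k\ge0$.

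The next step is a symmetrization. Let $\nu^{-}\in\cM^+(\R)$ be the reflection of $\nu$, i.e.\ $\nu^{-}(\Omega):=\nu(-\Omega)$; its moments are $(-1)^k t_k$, which equals $t_k$ for every $k$ because $t_k=0$ whenever $k$ is odd. Thus $\nu^{-}$ is also a solution of the Hamburger problem with data $\{t_k\}_{k\ge0}$, hence so is $\wt\nu:=\tfrac12(\nu+\nu^{-})$, and by construction $\wt\nu$ is invariant under $\lambda\mapsto-\lambda$. Finally, take $\rho\in\cM^+(\R_{\ge0})$ to be the push-forward of $\wt\nu$ under the map $\lambda\mapsto\lambda^2$; then $\int_{\R_{\ge0}}\lambda^k\,\rho(d\lambda)=\int_\R\lambda^{2k}\,\wt\nu(d\lambda)=t_{2k}=s_k$ for all $k\ge0$, so $\rho$ solves \eqref{eq:Stieltjes}.

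I expect the only genuinely non-routine point to be the block-decomposition argument for the Hankel matrices of $\{t_k\}_{k\ge0}$ and the resulting factorization of $\Delta^{t}_{0,n}$; the rest is bookkeeping with measures (in particular all measures above automatically have finite absolute moments of every order, and the degenerate case $s_0=0$ is subsumed). If one prefers to bypass this linear-algebra computation, an operator-theoretic route works equally well: double positivity says exactly that multiplication by $\lambda$ on the (possibly degenerate) inner-product space of polynomials equipped with $\langle\lambda^i,\lambda^j\rangle:=s_{i+j}$ is a non-negative symmetric operator, and the spectral measure of any non-negative self-adjoint extension of it (for instance the Friedrichs extension), taken with respect to the class of the constant polynomial, is a solution of \eqref{eq:Stieltjes} supported on $\R_{\ge0}$.
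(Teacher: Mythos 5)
Your proof is correct, but it is not the paper's route: the paper states Stieltjes' theorem without proof, as classical background, and its surrounding machinery (the Stieltjes continued fraction \eqref{eq:StiContdFrac}, the factorization \eqref{eq:KrSti01}--\eqref{eq:KrSti02} and Krein--Stieltjes strings) reflects the original continued-fraction approach of Stieltjes rather than a reduction to Hamburger. What you do instead is the standard "even extension'' trick: necessity by passing to $\lambda\,\rho(d\lambda)$, sufficiency by interlacing zeros, $t_{2k}=s_k$, $t_{2k+1}=0$, solving the Hamburger problem for $\{t_k\}$ and pushing forward under $\lambda\mapsto\lambda^2$. The key computation checks out: after the simultaneous permutation putting even indices first, the Hankel matrix $(t_{i+j})_{0\le i,j\le n}$ is genuinely block diagonal (all cross entries are odd-indexed $t$'s, hence zero), and $\Delta^{t}_{0,n}=\Delta_{0,\lfloor n/2\rfloor}\,\Delta_{1,\lceil n/2\rceil}\ge0$, so Theorem~\ref{th:hamburger} applies; the final push-forward has moments $t_{2k}=s_k$ and is supported on $\R_{\ge0}$. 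Two small remarks. First, the symmetrization step is superfluous: the push-forward of \emph{any} Hamburger solution $\nu$ for $\{t_k\}$ already has the right moments, since only even moments of $\nu$ enter. Second, the determinant-based formulation of positivity you rely on is inherited from the paper's statement of Theorem~\ref{th:hamburger}, which is slightly loose in degenerate cases (nonnegativity of the Hankel determinants is in general weaker than positive semidefiniteness of the Hankel forms); your argument is robust to this, because the same block decomposition shows that if the Hankel forms of $\{s_k\}$ and $\{s_{k+1}\}$ are positive semidefinite then so are those of $\{t_k\}$, so the reduction works verbatim under the sharper hypothesis. Compared with the classical route, your proof buys brevity and a clean conceptual reduction of Stieltjes to Hamburger, whereas the continued-fraction/string approach developed in the paper additionally produces the explicit coefficients \eqref{eq:KrSti02} that the rest of the paper needs.
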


If the moment sequence $\{s_k\}_{k\ge 0}$ is strictly double positive, then from the definition of the polynomials of the first kind in \eqref{eq:P_n} we have 
\begin{align}\label{eq:3.02}
P_n(0) & = (-1)^{n}\frac{\Delta_{1,n}}{\sqrt{\Delta_{0,n-1}\Delta_{0,n}}} \neq 0, & \sign(P_n(0)) & = (-1)^n,
\end{align}
for all $n\ge 0$. Hence, upon setting
\begin{align}
l_{n} & :=  |P_n(0)|^2, & \omega_n & := \frac{Q_n(0)}{P_{n}(0)} - \frac{Q_{n+1}(0)}{P_{n+1}(0)}= \frac{-1}{b_{n}P_{n}(0)P_{n+1}(0)},
\end{align}
for all $n\ge 0$  and using the recurrence relations \eqref{eq:rec1} with $z=0$, we conclude that  the coefficients of the Jacobi matrix \eqref{eq:Jacobi} admit the representation 
\begin{align}\label{eq:KrSti01}
a_n & =\frac{1}{l_{n}}\biggl(\frac{1}{\omega_{n-1}}+\frac{1}{\omega_{n}}\biggr), & b_n & =\frac{1}{\omega_{n}\sqrt{l_{n}l_{n+1}}},
\end{align}
for all $n\ge 0$, where we set $\frac{1}{\omega_{-1}}:=0$ for notational simplicity. Notice that by \eqref{eq:3.02} and \eqref{eq:b_n} we then have 
\begin{align}\label{eq:KrSti02}
l_{n} & = \frac{\Delta_{1,n}^2}{\Delta_{0,n-1}\Delta_{0,n}}, &  \omega_n & =\frac{\Delta_{0,n}^2}{\Delta_{1,n}\Delta_{1,n+1}},
\end{align}
for all $n\ge0$. 
Moreover, in this case the rational function $m_n$ defined in \eqref{eq:m_nJacobi} admits the Stieltjes continued fraction expansion
\be\label{eq:StiContdFrac}
m_{n}(z) = \cfrac{1}{-l_0\,z +  \cfrac{1}{\omega_0 + \cfrac{1}{\ddots +\cfrac{1}{-l_{n-1}\,z +\cfrac{1}{\omega_{n-1}}}}}}\, ,\quad z\in\C\backslash\R.
\ee 
Let us also mention the following formulas of Stieltjes \cite[(II.8), (II.11)]{sti}:
\begin{align}\label{eq:sumsStieltjes}
\sum_{k=0}^n l_k & = \sum_{k=0}^n |P_k(0)|^2 = \frac{\Delta_{2,n}}{\Delta_{0,n}}, & 
\sum_{k=0}^{n-1} \omega_k  & = -\frac{Q_n(0)}{P_n(0)} = - \frac{\Delta_{-1,n}}{\Delta_{1,n}},
\end{align}
where we defined the additional determinants
\begin{align}\label{eq:Delta-1}
\Delta_{-1,0} & = 0, & \Delta_{-1,n} & = \begin{vmatrix} 0 & s_0 & \dots &s_{n-1} \\ s_0 & s_1 & \dots & s_{n} \\ \vdots & \vdots & \ddots & \vdots \\ s_{n-1} & s_{n} & \dots & s_{2n-1} \end{vmatrix},\quad n\ge 1,
\end{align}
as well as  
\begin{align}\label{eq:Delta2}
\Delta_{2,0} & = 1, & \Delta_{2,n} & = \begin{vmatrix} s_2 & s_3 & \dots &s_{n+1} \\ s_3 & s_4 & \dots & s_{n+2} \\ \vdots & \vdots & \ddots & \vdots \\ s_{n+1} & s_{n+2} & \dots & s_{2n} \end{vmatrix},\quad n\ge 1.
\end{align} 

\begin{remark}\label{rem:scaling2}
If $\{s_k\}_{k\ge 0}$ is a double positive moment sequence and $c>0$, then so is the sequence $\{\ti{s}_k\}_{k\ge 0}$ defined by $\ti{s}_k:=cs_k$ for all $k\ge 0$. From \eqref{eq:KrSti02}, we then get
\begin{align}
\ti{\omega}_n & 
 = c\,\omega_n, & \ti{l}_n & 
  = c^{-1}\, l_n.
\end{align}
Notice that this transformation does not change the coefficients $a_n$ and $b_n$ in~\eqref{eq:KrSti01} (cf.\ Remark \ref{rem:scaling}), however, it does change the  continued fraction expansion~\eqref{eq:StiContdFrac}.
\end{remark}

It was observed by M.\ G.\ Krein \cite{kr52} (see also \cite[Appendix]{Akh}, \cite[\S 13]{kakr74}) that in the double positive case, the corresponding Jacobi matrix \eqref{eq:Jacobi} admits a mechanical interpretation. To this end, let us consider a string of length $L\in(0,\infty]$ carrying only point masses $\{\omega_n\}_{n=0}^{N-1}$  at the positions $\{x_n\}_{n=0}^{N-1}$ respectively, where we assume that $N\in\Z_{\ge0}\cup\{\infty\}$ and 
\begin{align}\label{eq:omegaKrSti1}
0 & =:x_{-1}<x_0 < x_1 < \dots <L, & l_n & := x_{n}-x_{n-1}>0.
\end{align}
In the case of infinitely many masses, we shall assume that $\lim_{n\to \infty}x_n=L$. Otherwise, if the string carries only finitely many point masses $N<\infty$, then we shall assume that $x_{N-1}<x_N:=L$, that is, there is no point mass at the right endpoint. 

If the ends of this string are fixed and it is stretched by a unit force, then small oscillations are described by the spectral problem associated with the corresponding Jacobi matrix $J$ given by \eqref{eq:Jacobi}, \eqref{eq:KrSti01}. Upon setting 
\be\label{eq:omegaKrSti2}
\omega:=\sum_{n=0}^{N} \omega_n\delta_{x_n} \in \cM^+([0,L)),
\ee
the corresponding difference equation \eqref{eq:rec1} can also be written as a spectral problem of the form (see \cite[\S 13]{kakr74})
\be\label{eq:KrString}
-f'' = z f\omega
\ee
on $[0,L)$. 
This differential equation has to be understood in a distributional sense and we postpone further details to Section \ref{sec:06}. Direct and inverse spectral theory for~\eqref{eq:KrString} with $\omega$ being an arbitrary positive Borel measure on $[0,L)$ has been developed by M.\ G.\ Krein in the 1950s (see \cite{kakr74, dymc76, kowa82}). Commonly, a pair $(L,\omega)$, where $L\in (0,\infty] $ and $\omega\in \cM^+([0,L))$ is called a {\em Krein string}. Such a Krein string $(L,\omega)$ is called {\em regular} if the length $L$ is finite and $\omega$ is a finite measure, that is,
\be\label{eq:KreinReg}
L+ \int_{[0,L)}\omega(dx) <\infty.
\ee
 Otherwise, the string is called {\em singular}. Strings of the particular form \eqref{eq:omegaKrSti1}, \eqref{eq:omegaKrSti2} are called {\em Stieltjes} or {\em Krein--Stieltjes strings}. 

With every Krein--Stieltjes string $(L,\omega)$ we can associate the following formal Stieltjes continued fraction
\be\label{eq:StiCFinfinite}
  \cfrac{1}{-l_0\,z +  \cfrac{1}{\omega_0 + \cfrac{1}{-l_1\,z + \cfrac{1}{\omega_1 + \cfrac{1}{\,\ddots\,}}}}}\,.
\ee 
If the number of point masses $N$ is finite, then \eqref{eq:StiCFinfinite} represents a rational Herglotz--Nevanlinna function.  
Otherwise, when there are infinitely many point masses, it was observed by Stieltjes that \eqref{eq:StiCFinfinite} converges for every $z\in \C\backslash\R_{\ge 0}$ if and only if at least one of the sums $\sum_{n\ge0}\omega_n= \omega([0,L))$ and $\sum_{n\ge0}l_n = L$ is infinite (or, equivalently, the Krein--Stieltjes string is singular). When the string $(L,\omega)$ is regular, the even order convergents $\Qr_{2n}(z)/\PPr_{2n}(z)$ and the odd order convergents $\Qr_{2n+1}(z)/\PPr_{2n+1}(z)$ of the continued fraction \eqref{eq:StiCFinfinite} still converge for every $z\in \C\backslash\R_{\ge 0}$, however, to different limits. Similar to the definition in Section~\ref{sec:02}, the limit 
\be\label{eq:mString}
m(z):= \lim_{n\to N} \frac{\Qr_{2n+1}(z)}{\PPr_{2n+1}(z)} =\lim_{n\to N}\cfrac{1}{-l_0\,z +  \cfrac{1}{\omega_0 + \cfrac{1}{\ddots +\cfrac{1}{\omega_{n-1} + \cfrac{1}{-l_n\,z} }}}}\, ,\quad z\in\C\backslash\R,
\ee    
will be called the {\em (principal) Weyl--Titchmarsh function} of the string $(L,\omega)$. Notice that it coincides with the {\em dynamical compliance} of the (dual) string (see \cite{kakr74, kawiwo07} for further details). 

\begin{theorem}[M.\ G.\ Krein]\label{th:StiCorresp} 
The map 
\be\label{eq:PsiS+}
\Psi_{\mathcal{S}}^+\colon \{s_k\}_{k\ge 0} \mapsto (L,\omega),
\ee
where $L\in (0,\infty]$ and $\omega\in \cM^+([0,L))$ are defined by \eqref{eq:KrSti02}, \eqref{eq:omegaKrSti1}, and  \eqref{eq:omegaKrSti2} establishes a one-to-one correspondence between the set of double positive sequences and the set of Krein--Stieltjes strings. 
\end{theorem}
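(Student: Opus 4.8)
The plan is to deduce the statement from Theorem~\ref{th:HambCorresp} together with the algebraic identities \eqref{eq:KrSti01}, \eqref{eq:KrSti02} (all of which have already been established above) and the continued fraction expansion \eqref{eq:StiContdFrac}. The conceptual point is that a Krein--Stieltjes string $(L,\omega)$ carries exactly one datum more than its associated Jacobi matrix, namely the first length $l_0$, and that this is precisely what encodes the normalization $s_0=1/l_0$ which Theorem~\ref{th:HambCorresp} forgets; compare Remark~\ref{rem:scaling} with Remark~\ref{rem:scaling2}.

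\emph{Well-definedness of $\Psi_{\mathcal{S}}^+$.} Let $\{s_k\}_{k\ge0}$ be double positive. If it is strictly double positive, then \eqref{eq:3.02} gives $P_n(0)\neq0$ with $\sign P_n(0)=(-1)^n$ for all $n$, so \eqref{eq:KrSti02} produces $l_n=|P_n(0)|^2>0$ and $\omega_n=\Delta_{0,n}^2/(\Delta_{1,n}\Delta_{1,n+1})>0$, the latter positivity being immediate because all the determinants $\Delta_{1,k}$ are positive by the definition of double positivity. Setting $x_n:=l_0+\dots+l_n\nearrow L:=\sum_{k\ge0}l_k\in(0,\infty]$ and $\omega:=\sum_{n\ge0}\omega_n\delta_{x_n}$ then yields a Krein--Stieltjes string with infinitely many point masses. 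If $\{s_k\}_{k\ge0}$ is double positive but not strictly so, its (unique, by Remark~\ref{rem:positive}) solution $\rho$ has finite support, the formulas \eqref{eq:KrSti02} terminate after finitely many steps, and one obtains a finite Krein--Stieltjes string---of finite length if $0\in\supp(\rho)$ and of infinite length otherwise. Finally, by Remark~\ref{rem:scaling2} a rescaling $s_k\mapsto cs_k$ acts on the string by $(l_n,\omega_n)\mapsto(c^{-1}l_n,c\omega_n)$, so distinct double positive sequences always produce distinct strings, even when they share the same Jacobi matrix.

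\emph{The inverse map and bijectivity.} Given a Krein--Stieltjes string with positive data $\{l_n,\omega_n\}$, the relations \eqref{eq:KrSti01} yield real numbers $a_n$ and $b_n=1/(\omega_n\sqrt{l_nl_{n+1}})>0$, hence a genuine (semi-infinite or finite) symmetric Jacobi matrix, to which Theorem~\ref{th:HambCorresp} associates a unique strictly positive sequence $\{\ti s_k\}_{k\ge0}$ with $\ti s_0=1$; one then sets $s_k:=\ti s_k/l_0$. That this inverts $\Psi_{\mathcal{S}}^+$ rests on two observations. First, \eqref{eq:KrSti01} can be solved recursively---$a_0=1/(l_0\omega_0)$ gives $\omega_0$ from $l_0$, then $b_0$ gives $l_1$, then $a_1$ gives $\omega_1$, and so on---so that $l_0$ together with the Jacobi matrix determines $\{l_n,\omega_n\}$ uniquely, while $l_0=|P_0(0)|^2=1/s_0$ recovers the lost normalization; keeping track of how scalings act (Remark~\ref{rem:scaling}, Remark~\ref{rem:scaling2}) one checks that $\Psi_{\mathcal{S}}^+$ and the map just described are mutually inverse. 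Second, the sequence $\{s_k\}_{k\ge0}$ obtained this way is again double positive: the principal Weyl--Titchmarsh function \eqref{eq:mString} of the string is a Stieltjes function (holomorphic on $\C\setminus\R_{\ge0}$, by the cited Krein--string theory), and evaluating its expansion at infinity through the convergents \eqref{eq:StiContdFrac} and the asymptotics \eqref{eq:assPade} identifies its representing measure as a solution of the Stieltjes moment problem with data $\{s_k\}_{k\ge0}$, whence Theorem~\ref{th:stieltjes} applies. The finite-string cases go through verbatim with a finite Jacobi matrix $J_N$ in place of $J(a,b)$.

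\emph{Main obstacle.} The algebra itself is routine; the two points that require genuine care are (i) showing that \emph{every} Krein--Stieltjes string, including the degenerate finite and infinite-length ones, really returns a double positive sequence---i.e.\ controlling the Stieltjes character and the behaviour at infinity of its Weyl--Titchmarsh function---and (ii) the combinatorics of the degenerate cases, where one must keep track of whether $\rho$ has an atom at the origin, equivalently whether the free tail $l_N$ of the resulting finite string is finite or infinite.
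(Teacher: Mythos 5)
The paper offers no proof of this theorem: it is presented as a classical result of M.~G.~Krein, with references to \cite{kr52}, the appendix of \cite{Akh} and \cite[\S 13]{kakr74}, so there is nothing in the text to compare your argument with line by line. That said, your route --- reduce to Theorem~\ref{th:HambCorresp} through the formulas \eqref{eq:KrSti01}, note that the string retains the one extra datum $l_0=1/s_0$ that the normalization $s_0=1$ discards, recover $\{l_n,\omega_n\}$ recursively from the Jacobi matrix together with $l_0$, and get double positivity of the recovered sequence from the Stieltjes character of the Weyl--Titchmarsh function \eqref{eq:mString} and the moment matching of its representing measure --- is precisely the mechanism the section assembles before stating the theorem and is, in substance, the classical proof; I find no error in it. Two steps should still be made explicit. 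First, in the surjectivity direction you invoke \eqref{eq:StiContdFrac} for data $\{l_n,\omega_n\}$ that do not yet come from a moment sequence; what is actually needed is the purely algebraic contraction identity showing that the Stieltjes fraction with arbitrary positive coefficients equals the Jacobi fraction with coefficients \eqref{eq:KrSti01}, since the paper derives \eqref{eq:StiContdFrac} only for data originating from a strictly double positive sequence. Second, for a string with infinitely many masses you need the recovered sequence to be \emph{strictly} double positive (otherwise \eqref{eq:KrSti02} cannot be applied to it to reproduce the given string), whereas Theorem~\ref{th:stieltjes} only yields double positivity; the strict version does follow from what you prove, because the representing measure then has infinite support in $\R_{\ge 0}$, so $\lambda\,\rho_0(d\lambda)$ has infinite support as well and all the determinants $\Delta_{1,n}$ are positive, but this half-line of argument is missing from your sketch. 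Your reading of the degenerate case (finite length exactly when $\rho$ has an atom at the origin) agrees with the remark following Theorem~\ref{th:StiCorresp}.
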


\begin{remark}
A few remarks are in order.
\begin{enumerate}[label=(\roman*), ref=(\roman*), leftmargin=*, widest=iiii]
\item The Weyl--Titchmarsh function \eqref{eq:mString} of a Krein--Stieltjes string $(L,\omega)$ admits an integral representation of the form  
\be 
 m(z) = \int_{\R_{\ge0}} \frac{\rho_0(d\lambda)}{\lambda-z}, \quad z\in\C\backslash\R_{\ge 0},
\ee
for some finite positive Borel measure $\rho_0$ on $\R_{\ge0}$, which is a solution of the corresponding Stieltjes moment problem. 
\item The length $L$ of the string is related to the behaviour of $m$ near zero:
\be
 - \lim_{\varepsilon\downarrow0} \I\varepsilon\, m(\I\varepsilon) = \rho_0(\{0\}) = \frac{1}{L},
\ee
 where the fraction on the right-hand side has to be interpreted as zero when $L$ is infinite (cf. \cite[\S 11]{kakr74} and also \cite[\S 5]{IndefiniteString}).
 In particular, when zero is an isolated singularity of $m$, then it is a pole if and only if $L$ is finite. 
\item
 If the moment sequence $\{s_k\}_{k\ge 0}$ is double positive but not strictly double positive, then the Stieltjes moment problem \eqref{eq:Stieltjes} is determinate. The unique solution $\rho$ is then supported on a finite set and upon setting $N:=\#\supp(\rho)$ one has $\Delta_{1,n}>0$ for all $n\in \{1,\dots, N\}$ and $\Delta_{1,n}=0$ for all $n>N$ if $L$ is infinite and $\Delta_{1,n}>0$ for all $n\in \{1,\dots, N-1\}$ and $\Delta_{1,n} = 0$ for all $n\ge N$ 
 if $L$ is finite.  In this case, the equations \eqref{eq:KrSti02} define precisely $N$ points $\{x_n\}_{n=0}^{N-1}$ and $N$ weights $\{\omega_n\}_{n=0}^{N-1}$, that is, the corresponding measure $\omega$ is supported on a finite set.
\item  Notice that the Jacobi matrix \eqref{eq:Jacobi} can be written in the form \eqref{eq:KrSti01} only if the polynomials of the first kind do not vanish at $z=0$, or, equivalently 
\[
\Delta_{1,n} \neq 0 
\]
for all $n\ge 0$. In particular, the principal Weyl--Titchmarsh function admits an expansion \eqref{eq:StiCFinfinite} only if the above condition holds true; compare \cite{fw17}. 
 \end{enumerate}
\end{remark}

As in the previous section, one is again able to characterize determinate Stieltjes moment problems in terms of the corresponding Krein--Stieltjes strings.

\begin{theorem}\label{th:KreinStieltjes}
  Let $\{s_k\}_{k\ge 0}$ be a strictly double positive sequence. Then the following conditions are equivalent:
\begin{enumerate}[label=(\roman*), ref=(\roman*), leftmargin=*, widest=iiii]
\item The Stieltjes moment problem \eqref{eq:Stieltjes} is determinate.
\item The Krein--Stieltjes string $(L,\omega)$ is singular.
\item The series $\sum_{n\ge 0} l_n + \omega_n$ diverges.
\end{enumerate}
\end{theorem}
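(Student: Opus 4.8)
The plan is to establish the two equivalences (ii)$\Leftrightarrow$(iii) and (i)$\Leftrightarrow$(ii) separately, the first being essentially a definition chase and the second the classical content. Since $\{s_k\}_{k\ge 0}$ is \emph{strictly} double positive, all the determinants $\Delta_{0,n}$ and $\Delta_{1,n}$ are positive, so by \eqref{eq:KrSti02} every $l_n$ and every $\omega_n$ is positive; in particular the Krein--Stieltjes string $(L,\omega)=\Psi_{\mathcal{S}}^+(\{s_k\})$ carries infinitely many point masses, and $L=\lim_{n\to\infty}x_n=\sum_{n\ge 0}l_n$ while $\omega([0,L))=\sum_{n\ge 0}\omega_n$. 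By the definition \eqref{eq:KreinReg}, the string is regular exactly when $L+\omega([0,L))=\sum_{n\ge 0}(l_n+\omega_n)<\infty$; taking contrapositives gives (ii)$\Leftrightarrow$(iii). (There is no loss of generality in assuming $s_0=1$, since by Remark \ref{rem:scaling2} a rescaling $s_k\mapsto cs_k$ with $c>0$ changes neither the determinacy of the moment problem nor the divergence of $\sum_{n\ge 0}(l_n+\omega_n)$.)

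For (i)$\Leftrightarrow$(ii) I would first identify the relevant convergents of \eqref{eq:StiCFinfinite}. By \eqref{eq:m_nJacobi} and \eqref{eq:StiContdFrac}, the $2n$-th (even) convergent of \eqref{eq:StiCFinfinite} is the rational Herglotz--Nevanlinna function $m_n=-Q_n/P_n$, which encodes the moments through the asymptotic expansion \eqref{eq:assPade}, whereas by \eqref{eq:mString} the odd convergents converge to the principal Weyl--Titchmarsh function $m$ of the string, and $m(z)=\int_{\R_{\ge 0}}\rho_0(d\lambda)/(\lambda-z)$ for a solution $\rho_0$ of \eqref{eq:Stieltjes} (the remark after Theorem \ref{th:StiCorresp}). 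Now I would combine two classical facts: the result of Stieltjes recalled just before Theorem \ref{th:StiCorresp}, namely that the string is singular if and only if the continued fraction \eqref{eq:StiCFinfinite} converges on all of $\C\setminus\R_{\ge 0}$, i.e.\ if and only if its even and odd convergents have a common limit; and the classical description of the solution set of the Stieltjes moment problem via the nested Weyl disks (resp.\ intervals on $\R_{<0}$), see \cite[Ch.\ II and Addenda]{Akh}, according to which the Cauchy transform of any $\sigma\in\cM^+(\R_{\ge 0})$ solving \eqref{eq:Stieltjes} lies, for every $n$, in the closed region bounded by the $n$-th even and $n$-th odd convergent, these regions shrinking to a single point precisely when those two limits agree. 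Putting these together, the Stieltjes moment problem is determinate if and only if the even and odd limits agree, that is, if and only if the string is singular.

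It is worth spelling out one direction concretely, because it makes visible where positivity of the support is used. If the string is regular, then the even convergents $m_n=-Q_n/P_n$ converge locally uniformly on $\C\setminus\R_{\ge 0}$ (by normality of Herglotz--Nevanlinna functions) to a limit $m_{\mathrm e}\ne m$; since each $m_n$ is the Cauchy transform of a probability measure supported on $\R_{>0}$ (the zeros of $P_n$ are positive, equivalently the truncated matrix $J_{n-1}$ in \eqref{eq:JacobiN} is positive definite), $m_{\mathrm e}$ is the Cauchy transform of a probability measure $\rho_{\mathrm e}\in\cM^+(\R_{\ge 0})$, and exactly as at the end of Section \ref{sec:02} the expansion \eqref{eq:assPade} forces $\rho_{\mathrm e}$ to solve \eqref{eq:Stieltjes}; as $\rho_{\mathrm e}\ne\rho_0$, the problem is indeterminate. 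In the opposite direction, if the string is singular and $\sum_{n\ge 0}l_n=\sum_{n\ge 0}|P_n(0)|^2$ diverges, then already the Hamburger problem, hence the Stieltjes problem, is determinate by Theorem \ref{th:Akh}(iv) with $\lambda=0$; the remaining subcase $\sum_{n\ge 0}l_n<\infty$, $\sum_{n\ge 0}\omega_n=\infty$ cannot be reduced to Theorem \ref{th:Akh} (Hamburger indeterminacy is genuinely possible there), and one disposes of it either by the Weyl-disk nesting above or by passing to the dual Krein--Stieltjes string (which essentially interchanges the sequences $\{l_n\}$ and $\{\omega_n\}$ and whose solution set is in bijection with that of $(L,\omega)$; cf.\ \cite{kakr74}), thereby returning to the divergent-$\sum l_n$ case. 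The main obstacle is exactly this last point: (i)$\Leftrightarrow$(ii) is not a formal corollary of its Hamburger analogue Theorem \ref{th:Akh}, since Stieltjes-determinate but Hamburger-indeterminate sequences exist, and handling them requires genuinely using that the solutions are supported on $\R_{\ge 0}$, via either the Stieltjes continued-fraction theory or the duality of Krein--Stieltjes strings.
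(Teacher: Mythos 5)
Your argument is correct in outline, but note that the paper does not prove Theorem \ref{th:KreinStieltjes} at all: it records it as a classical result, with the remark immediately following it attributing $(i)\Leftrightarrow(iii)$ to Stieltjes \cite{sti} and the string interpretation with $(i)\Leftrightarrow(ii)$ to Krein \cite{kr52}, \cite{kakr74}. What you have written is essentially a reconstruction of that classical proof. Your reduction of $(ii)\Leftrightarrow(iii)$ to the definition \eqref{eq:KreinReg} via $L=\sum_{n\ge0}l_n$ and $\omega([0,L))=\sum_{n\ge0}\omega_n$ is exactly right, as is the observation that strict double positivity forces infinitely many masses. For $(i)\Leftrightarrow(ii)$ your decomposition is the standard one and correctly locates the real difficulty: the regular case yields two distinct solutions from the even and odd limits of \eqref{eq:StiCFinfinite} (using \eqref{eq:assPade} as at the end of Section \ref{sec:02}, plus positivity of the zeros of $P_n$ to keep the limit measure on $\R_{\ge0}$), while in the singular case the subcase $\sum_n l_n=\infty$ follows from Theorem \ref{th:Akh}(iv) at $\lambda=0$, and the remaining subcase $\sum_n l_n<\infty$, $\sum_n\omega_n=\infty$ genuinely requires Stieltjes-specific input, since Hamburger indeterminacy is possible there. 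Two small caveats: the convergence of the full sequence of even convergents and the fact that the even and odd limits \emph{differ} in the regular case are themselves Stieltjes' theorem (quoted in Section \ref{sec:03} of the paper), not consequences of normality alone, so you should cite them as such; and your treatment of the hard singular subcase rests on the classical nesting of convergents and solution transforms on $\R_{<0}$ from \cite{Akh} (cited, not proved), with the dual-string alternative only gestured at. Given that the paper itself treats the theorem as known, leaning on these classical facts is acceptable, but they are the load-bearing steps of your proof rather than routine verifications.
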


\begin{remark}
The equivalence $(i)\Leftrightarrow (iii)$ is due to Stieltjes \cite{sti} and the connection with strings together with the equivalence $(i)\Leftrightarrow (ii)$ was observed by M.\ G.\ Krein \cite{kr52} (see also \cite{kakr74}). 
\end{remark}

\section{The Hamburger moment problem and Hamburger Hamiltonians}\label{sec:04}

\subsection{Canonical systems} Let us first briefly review some facts about canonical systems as far as they are needed in this section; for more details we refer the reader to \cite{dB68, ka83, ro14, wi14}. 
 In order to set the stage, let $H$ be a locally integrable, real, symmetric and non-negative definite $2\times2$ matrix function on $[0,\infty)$. 
 Furthermore, we shall assume that $H$ is trace normed, that is, 
 \begin{align}\label{eq:II.03}
  \tr\, H(x) = H_{11}(x)+ H_{22}(x)=1 
 \end{align}
  for almost all $x\in[0,\infty)$, and also exclude the cases when 
 \begin{align}\label{eq:H0}
  H(x) = H_0:=\begin{pmatrix} 1 & 0\\ 0 & 0 \end{pmatrix}
 \end{align}
 for almost all $x\in[0,\infty)$. 
 A matrix function $H$ with all these properties is called a {\em Hamiltonian} and associated with such a function is the canonical first order system  
 \begin{align}\label{eq:II.01}
  \begin{pmatrix} 0 & 1\\ -1 & 0 \end{pmatrix} F' = z H F,
 \end{align} 
 with a complex spectral parameter  $z$. 
 We introduce the fundamental matrix solution $U$ of the canonical system~\eqref{eq:II.01} as the unique solution of the integral equation 
 \begin{align}\label{eqnIEcansysSF}
  U(z,x) = \begin{pmatrix} 1 & 0 \\ 0 & 1 \end{pmatrix} - z\int_0^x \begin{pmatrix} 0 & 1 \\ -1 & 0 \end{pmatrix} H(t) U(z,t)dt, \quad x\in[0,\infty),~z\in\C. 
 \end{align} 
 The {\em Weyl--Titchmarsh function} $m$ of the canonical system~\eqref{eq:II.01} is now defined by 
 \begin{align}\label{eqnWTmcansys}
  m(z)=  \lim_{x\rightarrow \infty} \frac{U_{11}(z,x)}{U_{12}(z,x)}, \quad z\in \C\backslash\R.
 \end{align}
 As a Herglotz--Nevanlinna function, it admits an integral representation of the form  
 \begin{align}\label{eq:Herglotz}
  m(z) = c_1 z + c_2 +  \int_\R \frac{1}{\lambda-z} - \frac{\lambda}{1+\lambda^2}\, \rho(d\lambda), \quad z\in\C\backslash\R, 
 \end{align}
 for some constants $c_1$, $c_2\in\R$ with $c_1\geq0$ and a positive Borel measure $\rho$ on $\R$ with  
 \begin{align}\label{eq:Herglotz2} 
  \int_{\R}\frac{\rho(d\lambda)}{1+\lambda^2} < \infty.
 \end{align}
 Note that the coefficient $c_1$ of the linear term can be read off the Hamiltonian $H$ immediately (see \cite[Lemma~2.5]{wi14}); 
\begin{align}
  c_1 = \sup\left\lbrace x\in[0,\infty) \,\left|\, H(t)=H_0 \text{ for almost all }t\in[0,x)\right.\right\rbrace. 
\end{align}
 
 It is a fundamental result of L.\ de Branges \cite{dB68} (see also \cite[Theorem~2.4]{wi14}) that indeed every Herglotz--Nevanlinna function arises as the Weyl--Titchmarsh function of a unique canonical system~\eqref{eq:II.01}.

\begin{theorem}[L.\ de Branges]\label{th:dB01}
 For every Herglotz--Nevanlinna function $m$ there is a Hamiltonian $H$ such that $m$ is the Weyl--Titchmarsh function of the canonical system \eqref{eq:II.01}. 
 Upon identifying Hamiltonians which coincide almost everywhere on $[0,\infty)$, this correspondence is also one-to-one.   
\end{theorem}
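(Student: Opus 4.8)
The plan is to establish existence by approximation with rational Herglotz--Nevanlinna functions together with a compactness argument, and to treat the uniqueness statement as the genuinely hard part, for which I would invoke de Branges' structure theory. As a preliminary reduction, a direct computation with the integral equation~\eqref{eqnIEcansysSF} shows that prepending to a Hamiltonian $\widehat H$ an interval $[0,a)$ on which $H\equiv H_0$ (see~\eqref{eq:H0}) multiplies the fundamental matrix from the right by $\left(\begin{smallmatrix} 1 & 0\\ za & 1\end{smallmatrix}\right)$, and therefore replaces the Weyl--Titchmarsh function $\widehat m$ by $\widehat m(z)+az$; this is also consistent with the formula for $c_1$ displayed above. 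Choosing $a=c_1$, it thus suffices to realize the function $m(z)-c_1 z$, which is again a Herglotz--Nevanlinna function but now has vanishing linear term, so from here on I may assume $c_1=0$ in~\eqref{eq:Herglotz}.

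\textbf{Existence.} For $c_1=0$ one can select finitely supported positive measures $\rho_n$ (e.g.\ by discretizing $\rho|_{[-n,n]}$) and constants $c_{2,n}\in\R$ so that the rational Herglotz--Nevanlinna functions $m_n(z)=c_{2,n}+\int_\R\big(\frac{1}{\lambda-z}-\frac{\lambda}{1+\lambda^2}\big)\rho_n(d\lambda)$ converge to $m$ locally uniformly on $\C\backslash\R$. Every rational Herglotz--Nevanlinna function is the Weyl--Titchmarsh function of a canonical system with piecewise constant Hamiltonian consisting of finitely many indivisible intervals; this is the finite-dimensional situation, reducible to the continued-fraction calculus of Section~\ref{sec:02} by peeling off the poles one at a time. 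Extending each such Hamiltonian suitably to all of $[0,\infty)$, one obtains trace-normed Hamiltonians $H_n$ with $\|H_n(x)\|\le 1$ for all $x$; a diagonal argument then yields a subsequence converging weak-$\ast$ in $L^\infty([0,T];\R^{2\times 2})$ for every $T>0$ to a single $H$ that is trace-normed, symmetric and non-negative definite on $[0,\infty)$. Since $H$ enters~\eqref{eqnIEcansysSF} linearly, a Volterra--Gronwall estimate gives $U_n(z,x)\to U(z,x)$ for each fixed $z$ and $x$. The one remaining point is to interchange the limits $x\to\infty$ and $n\to\infty$ in~\eqref{eqnWTmcansys}; this follows from the nesting of Weyl disks, whose radii can be controlled uniformly in $n$ because $\{m_n\}$ together with the candidate limit form a normal family of Herglotz--Nevanlinna functions. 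The finitely many degenerate cases (where $\rho=0$ and $m$ is affine, e.g.\ $m\equiv 0$) are checked directly, and they are precisely the reason why the Hamiltonian $H\equiv H_0$ has to be excluded.

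\textbf{Uniqueness.} This I expect to be the main obstacle. It does not reduce to a Gronwall-type estimate but is exactly de Branges' uniqueness theorem, and in a self-contained treatment I would simply quote it from~\cite{dB68} (or~\cite[Theorem~2.4]{wi14}). Its proof attaches to $m$ a model space --- for instance a de Branges space built from the measure $\rho$ --- and to any realizing Hamiltonian $H$ the increasing chain of de Branges subspaces coming from the truncations of~\eqref{eq:II.01} on the intervals $[0,x)$. De Branges' ordering theorem shows that any two such maximal chains inside a fixed space are comparable, and in fact coincide once the parametrization is pinned down; the trace-normalization~\eqref{eq:II.03} removes exactly this reparametrization freedom, so that $H$ is determined by $m$ up to equality almost everywhere. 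Equivalently, one may regard this as the inverse spectral problem for canonical systems --- the analogue of Krein's inverse theorem for strings, cf.~\cite{kakr74} --- in which $m$ determines the transfer matrices $x\mapsto U(z,x)$ and the lengths of the indivisible intervals, with trace-normalization fixing the rest.
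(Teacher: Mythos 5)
The paper does not prove this theorem at all: it is quoted as a classical result of de Branges, with references to \cite{dB68} and \cite[Theorem~2.4]{wi14}, so there is no internal argument to compare yours against. Judged on its own terms, your outline follows the standard approximation route for existence (realize rational Herglotz--Nevanlinna functions by finitely many indivisible intervals, then pass to the limit by compactness of trace-normed Hamiltonians), and your preliminary reduction of the linear term is correct: prepending an $H_0$-interval of length $a$ multiplies $U$ on the right by $\left(\begin{smallmatrix}1 & 0\\ za & 1\end{smallmatrix}\right)$ and shifts $m$ by $az$, in agreement with the displayed formula for $c_1$. Since you ultimately quote de Branges (or Winkler) for uniqueness, your treatment of the hard half of the statement coincides with what the paper itself does, namely citation; the chain-of-de-Branges-subspaces sketch you give is indeed the idea behind that theorem, but it is not a proof.

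Two points in the existence part deserve correction. First, the interchange of the limits $n\to\infty$ and $x\to\infty$ is not justified by normality of $\{m_n\}$; the relevant mechanism is that a trace-normed Hamiltonian on $[0,\infty)$ automatically satisfies $\int_0^\infty \tr H(t)\,dt=\infty$ and is therefore in the limit point case, so the Weyl disks $D(z,x)$ of the limit Hamiltonian shrink to a single point as $x\to\infty$. One then argues that $m_n(z)$ lies in the $n$-th Weyl disk over $[0,x]$ for every $x$, that these disks converge (via $U_n(z,x)\to U(z,x)$, which itself needs an Arzel\`a--Ascoli/equicontinuity step on top of the weak-$\ast$ convergence of $H_n$, since products of weakly convergent sequences need not converge) to $D(z,x)$, and hence that $m(z)\in\bigcap_x D(z,x)=\{m_H(z)\}$. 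Without the limit-point observation the argument does not close. Second, your remark about the excluded case is slightly off: the Hamiltonian $H\equiv H_0$ corresponds to $U_{12}\equiv 0$, i.e.\ formally to $m\equiv\infty$, not to affine Herglotz--Nevanlinna functions; an affine $m(z)=c_1z+c_2$ is realized by $H_0$ on $[0,c_1)$ followed by the constant Hamiltonian $H_\theta$ with $\cot(\theta)=c_2$, so these cases are genuine Weyl--Titchmarsh functions and need no separate exclusion.
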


Let us also mention that in the case when 
$H(x) = H_0$ for almost all $ x\ge L$ with some $L\in (0,\infty)$, 
 straightforward calculations show that the corresponding Weyl--Titchmarsh function $m$ is given by
\be\label{eq:WTforCSfinite}
m(z) = \frac{U_{11}(z,L)}{U_{12}(z,L)},\quad z\in\C\backslash\R.
\ee
Hence, we can consider canonical systems \eqref{eq:II.01} on any finite interval $[0,L)$, the function \eqref{eq:WTforCSfinite} will be called the {\em principal Weyl--Titchmarsh function} and it coincides with the Weyl--Titchmarsh function of the canonical system whose Hamiltonian function is given by 
\be
\id_{[0,L)}(x)H(x) + \id_{[L,\infty)}(x) H_0,\quad x\in [0,\infty).
\ee
Hamiltonians on a finite interval are called {\em regular} and {\em singular} otherwise.

\subsection{Hamburger Hamiltonians}\label{ssecHH}
Following \cite{ka99a, ka99b}, let us now introduce a special class of Hamiltonians. 
To this end, fix some $L\in (0,\infty]$, an $N\in \Z_{\ge0} \cup \{\infty\}$ and let $\cL:=\{\ell_k\}_{k=0}^{N-1}$ and $\Theta:=\{\theta_k\}_{k=0}^{N}$ be real sequences such that $\theta_0=\frac{\pi}{2}$ and 
\begin{align}\label{eq:normal1}
 \ell_k & > 0, & \theta_k & <\theta_{k+1}<\theta_k+\pi,
\end{align}
for all $k\in \{0,\dots,N-1\} $.  We then set 
\begin{align}\label{eq:xlHamil}
x_{-1} & :=0; & x_{k} & :=x_{k-1}+\ell_k,\quad k\in \{0,\dots,N-1\}.
\end{align}
We also assume that $x_{N-1}<x_N:=L$ (so that $l_N:=x_N-x_{N-1}\in(0,\infty]$) and $\theta_N\not\in\pi\Z$ if $N<\infty$, and 
\be
L:=\lim_{k\to \infty} x_k = \sum_{k\ge 0} \ell_k,
\ee
in the case $N=\infty$. Next define the Hamiltonian function $H_{\cL,\Theta}\colon [0,L)\to \R^{2\times 2}$ by 
\be\label{eq:Hambhamil}
H_{\cL,\Theta}(x) :=  \sum_{k=0}^NH_{\theta_k} \id_{[x_{k-1},x_{k})}(x),\quad x\in [0,L), 
\ee
where the matrix $H_\theta$ is defined by 
\be\label{eq:Htheta}
 H_\theta:= \begin{pmatrix} \cos^2(\theta) & \cos(\theta)\sin(\theta) \\ \cos(\theta)\sin(\theta)  & \sin^2(\theta) \end{pmatrix}.
\ee
 Hamiltonians of the above form are called {\em Hamburger Hamiltonians} \cite{ka99a, ka99b}. 
 Notice that the requirement \eqref{eq:normal1} implies that every interval $(x_{k-1},x_k)$ is maximal $H$-indivisible of type $\theta_k$. 

Before we formulate the main results from \cite{ka99a, ka99b}, we need the following well-known fact. For every $n\in\{0,\dots,N\}$, denote by $H_{\cL,\Theta}^n$ the Hamiltonian defined on the interval $[x_{n-1},L)$ by 
\be
H_{\cL,\Theta}^n(x):=\sum_{k= n}^N H_{\theta_k} \id_{[x_{k-1},x_{k})}(x),\quad x\in [x_{n-1},L).
\ee
If $U^n$ is the fundamental matrix solution of the system
\be
  \begin{pmatrix} 0 & 1\\ -1 & 0 \end{pmatrix}F' = z H_{\cL,\Theta}^n F
\ee
on $[x_{n-1},L)$, then the corresponding Weyl--Titchmarsh function $\wt{m}_n$ is defined by 
\be
\wt{m}_n(z):= \lim_{x\to L}\frac{U_{11}^n(z,x)}{U_{12}^n(z,x)}, \quad z\in\C\backslash\R.
\ee

\begin{lemma}\label{lem:recursHn}
For every $n\in\{0,\dots,N-1\}$ one has 
\be
\wt{m}_n(z) = \ell_{n} z + \wt{m}_{n+1}(z), \quad z\in\C\backslash\R,
\ee
if $\theta_{n} \in \pi\Z$, and 
\be
\wt{m}_n(z) = \cot(\theta_{n}) + \cfrac{1}{-\ell_n \sin^2(\theta_{n}) z  + \cfrac{1}{-\cot(\theta_n) + \wt{m}_{n+1}(z)}}, \quad z\in\C\backslash\R,
\ee
whenever $\theta_{n}\notin\pi \Z$.
\end{lemma}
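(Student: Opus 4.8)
The plan is to integrate the canonical system explicitly across the leftmost interval $[x_{n-1},x_n)$ of the tail Hamiltonian $H_{\cL,\Theta}^n$ and then to splice the result onto $H_{\cL,\Theta}^{n+1}$. Writing $J:=\begin{pmatrix}0&1\\-1&0\end{pmatrix}$, I would first observe that on $[x_{n-1},x_n)$ the Hamiltonian $H_{\cL,\Theta}^n$ equals the constant matrix $H_{\theta_n}$ and that $JH_{\theta_n}$ has vanishing trace and vanishing determinant, so that $(JH_{\theta_n})^2=0$. Hence the successive-approximation (Peano) series for $U^n$ terminates after the linear term on $[x_{n-1},x_n)$, giving $U^n(z,x)=\begin{pmatrix}1&0\\0&1\end{pmatrix}-z(x-x_{n-1})JH_{\theta_n}$ there. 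Evaluating at $x=x_n$ then yields the transfer matrix
\[
T_n(z):=\begin{pmatrix}1-\ell_n z\cos(\theta_n)\sin(\theta_n) & -\ell_n z\sin^2(\theta_n)\\ \ell_n z\cos^2(\theta_n) & 1+\ell_n z\cos(\theta_n)\sin(\theta_n)\end{pmatrix},
\]
which is entire in $z$ and satisfies $\det T_n(z)=1$.

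Next, since $H_{\cL,\Theta}^n$ restricted to $[x_n,L)$ coincides with $H_{\cL,\Theta}^{n+1}$, uniqueness of solutions of the Volterra equation \eqref{eqnIEcansysSF} shows that $x\mapsto U^{n+1}(z,x)T_n(z)$ and $U^n(z,\cdot)$ solve the same integral equation on $[x_n,L)$ with the same value at $x=x_n$, whence $U^n(z,x)=U^{n+1}(z,x)T_n(z)$ for all $x\in[x_n,L)$. Reading off the first two columns of this identity and forming the quotient, $U^n_{11}(z,x)/U^n_{12}(z,x)$ becomes a linear-fractional function of $U^{n+1}_{11}(z,x)/U^{n+1}_{12}(z,x)$ with coefficients built from the entries of $T_n(z)$; passing to the limit $x\to L$ then gives
\[
\wt{m}_n(z)=\frac{T_{n,11}(z)\,\wt{m}_{n+1}(z)+T_{n,21}(z)}{T_{n,12}(z)\,\wt{m}_{n+1}(z)+T_{n,22}(z)},\qquad z\in\C\backslash\R.
\]
From here the two cases drop out by elementary algebra: if $\theta_n\in\pi\Z$ then $\sin(\theta_n)=0$ and $\cos^2(\theta_n)=1$, so $T_n(z)=\begin{pmatrix}1&0\\ \ell_n z&1\end{pmatrix}$ and the identity above reduces to $\wt{m}_n(z)=\ell_n z+\wt{m}_{n+1}(z)$; if $\theta_n\notin\pi\Z$, substituting $\cos(\theta_n)\sin(\theta_n)=\sin^2(\theta_n)\cot(\theta_n)$ and $\cos^2(\theta_n)=\sin^2(\theta_n)\cot^2(\theta_n)$ and rearranging identifies the linear-fractional expression with $\cot(\theta_n)+\bigl(-\ell_n\sin^2(\theta_n)z+(-\cot(\theta_n)+\wt{m}_{n+1}(z))^{-1}\bigr)^{-1}$, which is precisely the asserted continued fraction.

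Most of this is routine computation; I expect the only genuine obstacle to be the interchange of the limit $x\to L$ with the linear-fractional map in the second paragraph, i.e.\ the fact that the denominator $T_{n,12}(z)\wt{m}_{n+1}(z)+T_{n,22}(z)$ does not vanish for $z\in\C\backslash\R$. This is mild: numerator and denominator of the linear-fractional expression cannot vanish simultaneously because $\det T_n(z)=1$, and if only the denominator vanished then $\wt{m}_n$ would have a pole off the real axis, which is impossible for a Herglotz--Nevanlinna function. Alternatively one can bypass the limit altogether by invoking the standard transformation rule for Weyl--Titchmarsh functions of canonical systems under the insertion of an $H$-indivisible interval of type $\theta$ and length $\ell$ at the left endpoint, which is exactly the linear-fractional action of $\begin{pmatrix}1&0\\0&1\end{pmatrix}-z\ell\,JH_\theta$.
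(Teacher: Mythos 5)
Your proposal is correct and follows essentially the same route as the paper: compute the fundamental solution explicitly on the indivisible interval $[x_{n-1},x_n]$ (the paper states the same matrix you call $T_n(z)$, which your nilpotency observation $(JH_{\theta_n})^2=0$ justifies), use the transfer-matrix identity $U^n(z,x)=U^{n+1}(z,x)U^n(z,x_n)$, and read off the resulting linear-fractional relation between $\wt{m}_n$ and $\wt{m}_{n+1}$, which reduces to the two stated formulas by the same elementary algebra. Your extra remarks on the non-vanishing of the denominator (via $\det T_n=1$ and the Herglotz--Nevanlinna property) only make explicit a point the paper subsumes under ``straightforward calculations.''
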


\begin{proof}
Noting that the fundamental matrix-solution $U^n$ is given by
\[
\begin{split}
U^n(z,x) 
&=\begin{pmatrix} 1- z (x-x_{n-1}) \cos(\theta_n)\sin(\theta_n) &  - z (x-x_{n-1}) \sin^2(\theta_n) \\   z (x-x_{n-1}) \cos^2(\theta_n) & 1+   z (x-x_{n-1}) \cos(\theta_n)\sin(\theta_n)\end{pmatrix},
\end{split}
\]
for all $x\in [x_{n-1},x_n]$ and $z\in\C$, we get
\[
U^n(z,x) = U^{n+1}(z,x)U^n(z,x_n),\quad x\in [x_n,L),~z\in\C.
\]
Hence straightforward calculations show that
\[
\wt{m}_n(z) = \frac{\wt{m}_{n+1}(z) (1-z\ell_n\cos(\theta_n)\sin(\theta_n)) + z\ell_n\cos^2(\theta_n)}{-\wt{m}_{n+1}(z) z\ell_n\sin^2(\theta_n) + 1+z\ell_n\cos(\theta_n)\sin(\theta_n)}, \quad z\in\C\backslash\R,
\]
which readily establishes the claim.
\end{proof}

We define $\kappa$ as the number of nonzero (modulo $\pi$) elements of $\{\theta_k\}_{k=1}^N$. For every $j\in\{0,\ldots,\kappa\}$, let us denote by $k(j)$ the largest integer $k\in\{0,\ldots,N\}$ such that the number of nonzero (modulo $\pi$) elements of $\theta_0,\theta_1,\dots,\theta_{k-1}$ is exactly $j$. 
This definition is so that the sequence $\{\theta_{k(j)}\}_{j=0}^\kappa$ enumerates all nonzero (modulo $\pi$) members of the sequence $\{\theta_k\}_{k=0}^N$. 
In particular, we have $k(0)=0$ since $\theta_0=\pi/2$. 
Moreover, let us set 
\begin{align}\label{eq:l_nk}
l_j & := \ell_{k(j)}\sin^2(\theta_{k(j)}), & \omega_j & := \cot(\theta_{k(j+1)}) - \cot(\theta_{k(j)}),
\end{align}
as well as
\be\label{eq:dip_nk} 
\dip_j:= \begin{cases} 0, & k(j+1)-k(j)=1, \\ \ell_{k(j)+1}, & k(j+1)-k(j)=2.\end{cases}
\ee
In view of \eqref{eq:normal1}, the coefficients $\dip_j$ are well-defined for all $j\in\{0,\ldots,\kappa-1\}$.

\begin{corollary}\label{cor:HfinCFexpansion}
If  $N$ is finite,  then the Weyl--Titchmarsh function $m$ admits the continued fraction expansion 
\be\label{eq:mHfinCF}
m(z) = \cfrac{1}{-l_0\, z + \cfrac{1}{\omega_0 + \dip_0\, z + \cfrac{1}{\,\ddots\,  + \cfrac{1}{\omega_{\kappa-1} + \dip_{\kappa-1}\, z + \cfrac{1}{-l_{\kappa}\, z}}}}}\,, \quad z\in\C\backslash\R.
\ee
\end{corollary}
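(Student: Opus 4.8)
The plan is to derive \eqref{eq:mHfinCF} from Lemma~\ref{lem:recursHn} by iterating the two recursions along the sequence of indices $k(0)<k(1)<\dots<k(\kappa)$ at which the angles are nonzero modulo $\pi$. First I would observe that, since $N$ is finite, the last indivisible interval $(x_{N-1},x_N)$ is of type $\theta_N\notin\pi\Z$, so a short direct computation (using the explicit form of $U^N$ already displayed in the proof of Lemma~\ref{lem:recursHn}, together with the fact that $\wt m_{N+1}$ is interpreted as $\lim_{x\to L}U_{11}/U_{12}$ on a degenerate interval, i.e.\ $\infty$, or equivalently by applying the second formula of Lemma~\ref{lem:recursHn} with $\wt m_{N+1}=\infty$) yields the terminal identity
\[
\wt m_{k(\kappa)}(z)=\cot(\theta_{k(\kappa)})+\cfrac{1}{-\ell_{k(\kappa)}\sin^2(\theta_{k(\kappa)})\,z}=\cot(\theta_{k(\kappa)})+\cfrac{1}{-l_\kappa\,z}\,,
\]
where the last step uses the definition \eqref{eq:l_nk} of $l_\kappa$. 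This is the innermost layer of \eqref{eq:mHfinCF} up to the additive constant $\cot(\theta_{k(\kappa)})$, which will be absorbed in the next step.

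Next I would run a finite downward induction on $j=\kappa-1,\kappa-2,\dots,0$, showing that
\[
\wt m_{k(j)}(z)=\cot(\theta_{k(j)})+\cfrac{1}{-l_j\,z+\cfrac{1}{\omega_j+\dip_j\,z+\cfrac{1}{\,\ddots\,+\cfrac{1}{\omega_{\kappa-1}+\dip_{\kappa-1}\,z+\cfrac{1}{-l_\kappa\,z}}}}}\,.
\]
The inductive step splits according to whether $k(j+1)-k(j)=1$ or $k(j+1)-k(j)=2$. In the first case, applying the second formula of Lemma~\ref{lem:recursHn} at index $n=k(j)$ gives $\wt m_{k(j)}(z)=\cot(\theta_{k(j)})+\big(-\ell_{k(j)}\sin^2(\theta_{k(j)})z+(-\cot(\theta_{k(j)})+\wt m_{k(j)+1}(z))^{-1}\big)^{-1}$, and here $k(j)+1=k(j+1)$, so that $-\cot(\theta_{k(j)})+\wt m_{k(j+1)}(z)$ equals, by the induction hypothesis and \eqref{eq:l_nk}, exactly $\omega_j+(\text{inner continued fraction})$, while $\dip_j=0$ by \eqref{eq:dip_nk}; identifying $-\ell_{k(j)}\sin^2(\theta_{k(j)})z$ with $-l_j z$ completes this case. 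In the second case, $\theta_{k(j)+1}\in\pi\Z$, so the first formula of Lemma~\ref{lem:recursHn} gives $\wt m_{k(j)+1}(z)=\ell_{k(j)+1}z+\wt m_{k(j)+2}(z)$ with $k(j)+2=k(j+1)$; substituting this into the second formula at index $n=k(j)$, the quantity $-\cot(\theta_{k(j)})+\wt m_{k(j)+1}(z)$ becomes $\ell_{k(j)+1}z+(-\cot(\theta_{k(j)})+\wt m_{k(j+1)}(z))$, which by the induction hypothesis equals $\dip_j z+\omega_j+(\text{inner continued fraction})$ since $\dip_j=\ell_{k(j)+1}$ here; again identifying $-l_j z$ as above finishes the step.

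Finally, specializing the induction to $j=0$ and using $k(0)=0$ together with $\theta_0=\pi/2$, hence $\cot(\theta_0)=0$ and $m=\wt m_0$, yields precisely \eqref{eq:mHfinCF}. The only genuine subtlety is bookkeeping: one must check that the two cases in \eqref{eq:dip_nk} exhaust all possibilities for $k(j+1)-k(j)$, i.e.\ that between two consecutive nonzero angles there is at most one zero (modulo $\pi$) angle. This is exactly the content of the constraint $\theta_k<\theta_{k+1}<\theta_k+\pi$ in \eqref{eq:normal1}: if $\theta_k$ and $\theta_{k+2}$ were both in $\pi\Z$ with $\theta_{k+1}\in\pi\Z$ as well, the strict monotonicity together with the $<\theta_k+\pi$ bound would be violated over two steps; more directly, two consecutive elements of $\{\theta_k\}$ cannot both lie in $\pi\Z$ because that would force $\theta_{k+1}-\theta_k$ to be a nonzero multiple of $\pi$, contradicting $0<\theta_{k+1}-\theta_k<\pi$. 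Thus $k(j+1)-k(j)\in\{1,2\}$ always, the coefficients $\dip_j$ are well-defined as already noted after \eqref{eq:dip_nk}, and the induction closes. I expect the main obstacle to be purely notational — correctly matching up the finitely many applications of the two recursions with the indexing in \eqref{eq:l_nk}--\eqref{eq:dip_nk} — rather than any analytic difficulty, since all convergence issues are absent when $N$ is finite and every function in sight is a rational Herglotz--Nevanlinna function.
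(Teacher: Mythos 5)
Your argument is correct and takes essentially the same route as the paper: the paper's proof likewise computes the terminal function $\wt{m}_N(z)=\cot(\theta_N)-\tfrac{1}{\ell_N\sin^2(\theta_N)z}$ directly (using $\theta_N\notin\pi\Z$) and then applies Lemma~\ref{lem:recursHn} repeatedly, which is exactly your downward induction through the indices $k(j)$ with the two cases of \eqref{eq:dip_nk}. Your write-up simply makes explicit the bookkeeping (including that $k(j+1)-k(j)\in\{1,2\}$ by \eqref{eq:normal1}) that the paper leaves implicit.
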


\begin{proof}
It suffices to note that $m$ coincides with $\wt{m}_0$ and
\[
\wt{m}_N(z) = \cot(\theta_N) - \frac{1}{\ell_N \sin^2(\theta_N) z} = \cot(\theta_{n(\kappa)}) + \frac{1}{- l_\kappa z}, \quad z\in\C\backslash\R,
\]
 since by normalization $\theta_N\not\in\pi\Z$, and then apply Lemma \ref{lem:recursHn}.
\end{proof}

In particular, the Weyl--Titchmarsh function corresponding to a Hamburger Hamiltonian with finite $N$ is a rational function that vanishes at $\infty$. The converse holds true as well.

\begin{corollary}\label{cor:IPfinHamb}
Every rational Herglotz--Nevanlinna function that vanishes at $\infty$ is the Weyl--Titchmarsh function of a canonical system with a Hamburger Hamiltonian with finite $N$.
\end{corollary}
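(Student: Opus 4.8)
The plan is to start from a rational Herglotz--Nevanlinna function $m$ that vanishes at $\infty$ and produce, by successive extraction, a finite continued fraction of the shape \eqref{eq:mHfinCF}; then Corollary~\ref{cor:HfinCFexpansion} (read backwards) together with the definitions \eqref{eq:l_nk}--\eqref{eq:dip_nk} will let us reconstruct the finite data $(\cL,\Theta)$ and hence the Hamburger Hamiltonian. First I would record the structural facts about such $m$: since $m$ is rational, Herglotz--Nevanlinna and $m(z)\to 0$ as $z\to\infty$, it has the form $m(z)=\sum_{j} \frac{\alpha_j}{\lambda_j - z}$ with finitely many real, simple poles $\lambda_j$ and weights $\alpha_j>0$ (no linear or constant term, because of the behaviour at $\infty$). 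In particular $m$ is not identically zero and $1/m$ is a rational Herglotz--Nevanlinna function with a nonzero linear term at $\infty$; more precisely $m(z) = -\frac{c}{z} + \OO(z^{-2})$ with $c = \sum_j \alpha_j > 0$, so $-1/m(z) = \frac{1}{c}z + d + \oo(1)$ for some $d\in\R$, i.e. $-1/m(z) = l_0 z + (\text{a Herglotz--Nevanlinna function that tends to a real constant})$.

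The core is an induction on the number $P$ of poles of $m$. I would peel off one step of \eqref{eq:mHfinCF} at a time: given $m$ with $P\ge 1$ poles, write $-1/m(z) = l_0 z + r(z)$ where $l_0 = 1/c > 0$ and $r$ is Herglotz--Nevanlinna with $r(z)\to \omega_0 \in \R$ as $z\to\infty$ (here $\omega_0 := d$). Now consider $r(z) - \omega_0$: it is Herglotz--Nevanlinna and vanishes at $\infty$. If $r - \omega_0 \equiv 0$, we are at the bottom of the fraction and $m(z) = 1/(-l_0 z + \omega_0)$ — but a function vanishing at $\infty$ of this exact form has no finite poles unless... actually this terminal case should be $\wt m_N(z) = \cot\theta_N + 1/(-l_\kappa z)$, so I would instead arrange the recursion to stop when $-1/m$ is affine, i.e. $m(z) = 1/(-l z)$, matching the innermost layer of \eqref{eq:mHfinCF}. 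Otherwise $r - \omega_0$ is a nonzero Herglotz--Nevanlinna function vanishing at $\infty$, hence $-1/(r(z)-\omega_0) = \dip_0 z + m_1(z)$ with $\dip_0 \ge 0$ and $m_1$ again Herglotz--Nevanlinna vanishing at $\infty$; moreover $m_1$ is rational with strictly fewer poles than $m$ (each inversion-plus-linear-extraction strictly decreases the number of poles, since it cancels the pole structure forced by the $\OO(z^{-1})$ behaviour). Applying the induction hypothesis to $m_1$ gives a finite continued fraction, and assembling the two extracted layers $\bigl(-l_0 z + \tfrac{1}{\omega_0 + \dip_0 z + \,\cdots}\bigr)^{-1}$ reproduces exactly \eqref{eq:mHfinCF} with a finite $\kappa$.

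It then remains to translate the coefficients $\{l_j\}$, $\{\omega_j\}$, $\{\dip_j\}$ back into admissible Hamburger data. I would invert \eqref{eq:l_nk}--\eqref{eq:dip_nk}: define $\theta_{k(j)}$ recursively by $\cot\theta_{k(0)} = 0$ (so $\theta_{k(0)}=\pi/2$, as required, matching $\theta_0$) and $\cot\theta_{k(j+1)} = \cot\theta_{k(j)} + \omega_j$, choosing the branch so that \eqref{eq:normal1} holds; set $\ell_{k(j)} := l_j / \sin^2\theta_{k(j)} > 0$; and wherever $\dip_j > 0$ insert an intermediate index with $\theta \in \pi\Z$ and $\ell$-value $\dip_j$ (and wherever $\dip_j = 0$, no intermediate index), which is precisely the dichotomy \eqref{eq:dip_nk}. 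This produces finite sequences $\cL$, $\Theta$ meeting all the standing assumptions of Section~\ref{ssecHH}, and by Corollary~\ref{cor:HfinCFexpansion} the canonical system with Hamburger Hamiltonian $H_{\cL,\Theta}$ has Weyl--Titchmarsh function equal to our $m$. The main obstacle I anticipate is bookkeeping rather than conceptual: one must check that at each extraction the asymptotic constant $\omega_j$ is genuinely finite (i.e. that $r$ does not itself tend to $\infty$, equivalently that consecutive $l$'s are separated by a finite number of layers) and that the pole count strictly drops, so that the induction terminates at the affine base case; this is exactly the point where the hypothesis ``vanishes at $\infty$'' is used, and where one must be careful that a stray linear term $\dip_j z$ can appear but never two consecutive inversions without a finite-constant layer in between.
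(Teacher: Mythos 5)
Your overall strategy---a Euclidean-type successive extraction producing a finite continued fraction of the shape \eqref{eq:mHfinCF}, followed by reconstruction of the data $(\cL,\Theta)$ via \eqref{eq:l_nk}--\eqref{eq:dip_nk} and Corollary~\ref{cor:HfinCFexpansion}---is exactly the paper's, but your extraction step strips the constant at the wrong moment, and this is a genuine gap. After writing $-1/m(z)=l_0z+r(z)$ you remove $\omega_0:=\lim_{z\to\infty}r(z)$ \emph{before} inverting. Tracing your own identities, what your algorithm actually yields is
\begin{equation*}
 m(z)=\cfrac{1}{-l_0 z-\omega_0+\cfrac{1}{\dip_0 z+m_1(z)}},
\end{equation*}
with a constant sitting at the same level as $-l_0z$. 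This is not of the form \eqref{eq:mHfinCF}, whose levels alternate between pure terms $-l_jz$ and terms $\omega_j+\dip_jz$ (the absence of a constant in the first level reflects $\theta_0=\pi/2$), so your assembly claim that the extracted layers give $\bigl(-l_0z+\tfrac{1}{\omega_0+\dip_0z+\cdots}\bigr)^{-1}$ is false unless $\omega_0=0$. Concretely, for $m(z)=\frac{1}{1-z}$ one has $-1/m(z)=z-1$, so your scheme produces $l_0=1$, $\omega_0=-1$ and then stalls at your (already uneasy) base case, whereas the correct expansion is $m(z)=\bigl(-z+\tfrac{1}{1}\bigr)^{-1}$, i.e.\ the coefficient $\omega_0=+1$ appears one level deeper and is obtained only after a further inversion.

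A second, related defect: your claim that $m_1$, defined by $-1/(r-\omega_0)=\dip_0z+m_1$, ``is again Herglotz--Nevanlinna vanishing at $\infty$'' is false in general. Since $r-\omega_0$ vanishes at $\infty$, the function $-1/(r-\omega_0)$ always has a strictly positive linear coefficient, and after removing it the remainder tends to a finite but generically nonzero constant; thus the induction hypothesis does not apply to $m_1$, and your scheme forces $\dip_0>0$ at every step, so it can never reproduce the layers of \eqref{eq:mHfinCF} with $\dip_j=0$. The repair is precisely the ordering used in the paper: remove only the linear term $l_0z$ from $-1/m$, keep the (possibly nonzero) limiting constant inside the remainder $\wt m_0$, invert to get $m_1=-1/\wt m_0$, and only then strip \emph{both} a constant $\omega_0$ and a linear term $\dip_0z$ (at least one of them nonzero) so that the new remainder $\wt m_1$ genuinely vanishes at $\infty$ and has strictly smaller degree. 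This also resolves your base-case worries: the recursion stops either with $\wt m_0\equiv0$, giving the innermost layer $1/(-l_\kappa z)$, or with $\wt m_1\equiv0$, in which case the final reciprocal is absent (interpreted as $l_\kappa=\infty$, i.e.\ $\ell_N=\infty$). Your final translation of the coefficients into admissible Hamburger data via \eqref{eq:l_nk}--\eqref{eq:dip_nk} and Corollary~\ref{cor:HfinCFexpansion} is fine and coincides with the paper's conclusion.
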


\begin{proof}
 Taking into account \eqref{eq:l_nk}, \eqref{eq:dip_nk} and \eqref{eq:mHfinCF}, it suffices to show that every rational Herglotz--Nevanlinna function $m$ that vanishes at $\infty$ admits an expansion of the form in \eqref{eq:mHfinCF}. 
 The proof of this fact is constructive and it can be seen as an analog of the Euclidian algorithm for Herglotz--Nevanlinna functions. Indeed, the assumptions on $m$ imply that $m = \Qr/\PPr$, where the polynomials $\Qr$ and $\PPr$ do not have common zeros and $\deg(\PPr) = \deg(\Qr) +1 =: n$. Unless $m$ is identically zero, we may write 
\[
m(z) = \frac{1}{-m_0(z)}, \quad z\in\C\backslash\R,
\]
where $m_0 = -\PPr/\Qr$ is a rational Herglotz--Nevanlinna function. Since $\deg(\PPr) = \deg(\Qr) +1$, there is an $l_0>0$ such that $m_0(z) = l_0\,z + \wt{m}_0(z)$, where $\wt{m}_0$ is again a rational Herglotz--Nevanlinna function satisfying $\wt{m}_0=\wt{\PPr}/\Qr$, where $n-2\le \deg(\wt{\PPr}) \le \deg(\Qr) = n-1$. Hence, unless $\wt{m}_0$ is identically zero,  
\[
m(z) = \cfrac{1}{-l_0\, z + \cfrac{1}{m_1(z)}}, \quad z\in\C\backslash\R, 
\]
where $m_1:= -1/\wt{m}_0$ is a rational Herglotz--Nevanlinna function. Therefore, there are constants $\omega_0\in\R$ and $\dip_0\in\R_{\ge 0}$ such that $m_1(z) = \omega_0  + \dip_0\,z + \wt{m}_1(z)$, where $\wt{m}_1$ is a rational Herglotz--Nevanlinna function that vanishes at $\infty$. Since $\wt{m}_0$ is bounded near $\infty$, at least one of the coefficients $\omega_0$ or $\dip_0$ is non-zero. Moreover, we have $\wt{m}_1 = \wt{\Qr}/\wt{\PPr}$, where the polynomials $\wt{\Qr}$ and $\wt{\PPr}$ do not have common zeros and $\deg(\wt{\PPr}) = \deg(\wt{\Qr}) +1 \le n-1$.  Upon applying the same procedure to $\wt{m}_1$, we arrive at the representation \eqref{eq:mHfinCF} after finitely many iterations.
\end{proof}

\subsection{Connection with the moment problem} 
Notice that if $\theta_k\notin \pi\Z$ for all $k\in\{0,\ldots,N\}$, then it follows from~\eqref{eqnWTmcansys} and Corollary~\ref{cor:HfinCFexpansion} that the Weyl--Titchmarsh function $m$ admits the Stieltjes continued fraction expansion 
\be
 m(z) = \lim_{n\rightarrow N} \cfrac{1}{-l_0\, z + \cfrac{1}{\omega_0 + \cfrac{1}{\,\ddots\,  + \cfrac{1}{\omega_{n-1} + \cfrac{1}{-l_{n}\, z}}}}}\,, \quad z\in\C\backslash\R,
\ee
where the coefficients are given by 
\begin{align}
l_k & = \ell_k\sin^2(\theta_k),  & \omega_k & = \cot(\theta_{k+1})-\cot(\theta_{k}).
\end{align}
 This establishes a connection between canonical systems with such Hamburger Hamiltonians and continued fractions of the form~\eqref{eq:CFjacobiInf}, and thus also with Jacobi matrices. Indeed, taking \eqref{eq:KrSti01} into account, the corresponding Jacobi coefficients (after some calculations) are given by
\begin{align}\label{eq:Jac=CS}
a_n & = -\frac{\cot(\theta_{n+1}-\theta_n) + \cot(\theta_{n}-\theta_{n-1})}{\ell_n}, & b_n & = \frac{1}{\sin(\theta_{n+1}-\theta_n)\sqrt{\ell_{n+1}\ell_n}},
\end{align}
 where $\theta_{-1}:=0$ for notational simplicity. Moreover, the second formula in~\eqref{eq:sumsStieltjes} together with the first formula in~\eqref{eq:KrSti02} imply 
\begin{align}\label{eq:CSmoments}
\cot(\theta_{n}) & = - \frac{\Delta_{-1,n}}{\Delta_{1,n}}, & \ell_n & =  \frac{\Delta_{-1,n}^2+\Delta_{1,n}^2}{\Delta_{0,n-1}\Delta_{0,n}}.
\end{align}
 It was observed by I.\ S.\ Kac in \cite{ka99a,ka99b} that in fact \eqref{eq:Jac=CS} establishes a one-to-one correspondence between  Hamburger Hamiltonians with $N=\infty$ and $\ell_0=1$, semi-infinite Jacobi matrices and thus also strictly positive sequences $\{s_k\}_{k\ge0}$ with $s_0=1$. More precisely, to this end we only need to set 
\be\label{eq:CSmoments2}
\theta_n:= 0\ ({\rm mod}\;\pi)
\ee  
  in \eqref{eq:CSmoments} if $\Delta_{1,n}=0$. Note that  the lengths $\ell_n$ are indeed positive for all $n\ge 0$ since we have the inequality   
\be\label{eq:Deltas}
   \Delta_{1,n} \Delta_{-1,n+1} - \Delta_{1,n+1} \Delta_{-1,n} \not= 0, 
\ee
which follows upon evaluating~\eqref{eq:JacWronsk} at zero and using~\eqref{eq:P_n} as well as~\eqref{eq:QnMoments} to compute the values of $P_n$ and $Q_n$ at zero. 

\begin{theorem}[I.\ S.\ Kac]\label{th:kac}
The map
\be\label{eq:psiH}
\Psi_H\colon \{s_k\}_{k\ge 0}\mapsto H_{\cL,\Theta},
\ee
where $H_{\cL,\Theta}$ is the Hamburger Hamiltonian \eqref{eq:Hambhamil} defined by \eqref{eq:CSmoments}, \eqref{eq:CSmoments2} establishes a one-to-one correspondence between the set of positive sequences and the set of Hamburger Hamiltonians. 
\end{theorem}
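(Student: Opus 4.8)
The plan is to dispose of two cases separately, in each reducing the statement to a correspondence that has already been established. If $\{s_k\}_{k\ge0}$ is strictly positive, then (as the construction will show) the associated Hamiltonian has $N=\infty$, and everything is reduced to the bijection with Jacobi matrices of Theorem~\ref{th:HambCorresp} by means of the explicit formulas~\eqref{eq:Jac=CS}. If $\{s_k\}_{k\ge0}$ is positive but not strictly positive, then Remark~\ref{rem:positive} shows that its unique solution is finitely supported, and the statement is reduced to Corollaries~\ref{cor:HfinCFexpansion} and~\ref{cor:IPfinHamb} together with the uniqueness part of Theorem~\ref{th:dB01}. Throughout, one may assume $s_0>0$, since otherwise $\{s_k\}_{k\ge0}$ vanishes identically.

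First I would verify that~\eqref{eq:CSmoments}, \eqref{eq:CSmoments2} really produce admissible data in the sense of~\eqref{eq:normal1}. Positivity of the lengths $\ell_n$ is immediate from~\eqref{eq:CSmoments} once one knows $\Delta_{-1,n}^2+\Delta_{1,n}^2\ne0$, which follows from~\eqref{eq:Deltas}; the same inequality shows that $\Delta_{1,n}$ and $\Delta_{1,n+1}$ cannot vanish simultaneously (so the $\dip_j$ of~\eqref{eq:dip_nk} are well-defined) and that $\cot\theta_{n+1}\ne\cot\theta_n$. The monotonicity $\theta_n<\theta_{n+1}<\theta_n+\pi$ is then obtained by reconstructing the angles inductively from $\theta_0=\pi/2$: the prescribed value $\cot\theta_{n+1}=-\Delta_{-1,n+1}/\Delta_{1,n+1}$, or the stipulation $\theta_{n+1}\in\pi\Z$ from~\eqref{eq:CSmoments2} when $\Delta_{1,n+1}=0$, pins down a unique $\theta_{n+1}$ in the open interval $(\theta_n,\theta_n+\pi)$, because $\cot$ restricted to such an interval is a decreasing bijection onto $\R\setminus\{\cot\theta_n\}$. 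If $\{s_k\}_{k\ge0}$ is strictly positive all $\Delta_{0,n}$ are positive and this yields an infinite admissible sequence, i.e. a Hamburger Hamiltonian with $N=\infty$; otherwise the construction terminates after finitely many steps by Remark~\ref{rem:positive}, i.e. $N<\infty$.

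For the strictly positive case I would first normalize $s_0=1$: by Remark~\ref{rem:scaling2} the scaling $s_k\mapsto cs_k$ corresponds on the Hamiltonian side to $\cot\theta_n\mapsto c\cot\theta_n$ together with the induced change of the $\ell_n$, and for each fixed $c>0$ this is a bijection of the Hamburger Hamiltonians with $N=\infty$; since every strictly positive sequence is uniquely a multiple of one with $s_0=1$, and every such Hamiltonian is uniquely such a rescaling of one with $\ell_0=1$, it suffices to treat $s_0=1$. For $s_0=1$, Theorem~\ref{th:HambCorresp} gives the bijection onto semi-infinite Jacobi matrices with $b_n>0$, so it remains to observe that~\eqref{eq:Jac=CS} is invertible: starting from $\theta_{-1}=0$, $\theta_0=\pi/2$, $\ell_0=1$, the equation for $a_n$ determines $\theta_{n+1}-\theta_n\in(0,\pi)$ uniquely (as $\cot$ maps $(0,\pi)$ bijectively onto $\R$) and then the equation for $b_n$ determines $\ell_{n+1}>0$, so~\eqref{eq:Jac=CS} is a bijection between such Jacobi matrices and Hamburger Hamiltonians with $N=\infty$ and $\ell_0=1$. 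The one remaining point is the (routine but tedious) verification that the composition of $\Psi_H$ with this chain is the identity, i.e. that feeding the Jacobi matrix built from~\eqref{eq:b_n}, \eqref{eq:a_n} into the inverse of~\eqref{eq:Jac=CS} reproduces precisely~\eqref{eq:CSmoments}; this is essentially the computation recorded before the statement, combined with~\eqref{eq:KrSti01} and~\eqref{eq:KrSti02}.

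For the positive but not strictly positive case, the unique solution $\rho_0$ is supported on $N':=\#\supp(\rho_0)$ points, so that $m(z):=\int_\R\rho_0(d\lambda)/(\lambda-z)$ is a rational Herglotz--Nevanlinna function vanishing at $\infty$. By Corollary~\ref{cor:IPfinHamb} there is a finite-$N$ Hamburger Hamiltonian whose Weyl--Titchmarsh function is $m$, and by the uniqueness in Theorem~\ref{th:dB01} it is the only one; conversely, by Corollary~\ref{cor:HfinCFexpansion} every finite-$N$ Hamburger Hamiltonian has a rational Weyl--Titchmarsh function vanishing at $\infty$, hence arises from such a $\rho_0$, whose moments form a positive but not strictly positive sequence. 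It therefore only remains to check that the Hamiltonian attached to $\rho_0$ in this way is the one produced by~\eqref{eq:CSmoments}, \eqref{eq:CSmoments2}, which one does by comparing the continued fraction~\eqref{eq:mHfinCF} of Corollary~\ref{cor:HfinCFexpansion} with the asymptotic expansion~\eqref{eq:assPade} of the Pad\'e approximants of $m$: this forces the coefficients $l_j$, $\omega_j$, $\dip_j$ there to be given by~\eqref{eq:l_nk}, \eqref{eq:dip_nk} with the $\ell_k$, $\theta_k$ from~\eqref{eq:CSmoments}, \eqref{eq:CSmoments2}. I expect the main nuisance throughout to be the bookkeeping between the two index families — the moment index $n$ in~\eqref{eq:CSmoments} and the Hamiltonian index $j$ (through $k(j)$) in~\eqref{eq:l_nk}, \eqref{eq:dip_nk} differ exactly by the number of $\theta_k$ lying in $\pi\Z$, equivalently of vanishing $\Delta_{1,n}$ — and in particular matching this with the two degenerate cases $k(j+1)-k(j)\in\{1,2\}$ of~\eqref{eq:dip_nk}; in the case $N=\infty$ one additionally has to note that the continued fraction~\eqref{eq:StiCFmodinfinite} converges to the Weyl--Titchmarsh function~\eqref{eqnWTmcansys} of the corresponding canonical system (its convergents being, up to the contraction relating~\eqref{eq:StiCFmodinfinite} to~\eqref{eq:CFjacobiInf}, the Weyl--Titchmarsh functions of its regular truncations), which is already implicit in the existence of the limit~\eqref{eq:WTjacobi}.
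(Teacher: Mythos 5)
The paper itself offers no proof of Theorem~\ref{th:kac}: the result is quoted from Kac's papers, and the only justification recorded is the pair of formulas \eqref{eq:CSmoments}, \eqref{eq:CSmoments2} together with the observation that \eqref{eq:Deltas} keeps the lengths $\ell_n$ positive and forbids consecutive zeros of $\Delta_{1,n}$. Your reconstruction follows exactly the route the surrounding text suggests: identify $\Psi_H$, through \eqref{eq:Jac=CS}, with the Jacobi-matrix correspondence of Theorem~\ref{th:HambCorresp} for strictly positive sequences, and treat positive but not strictly positive sequences via the finite-$N$/rational machinery of Corollaries~\ref{cor:HfinCFexpansion} and~\ref{cor:IPfinHamb} combined with the uniqueness part of Theorem~\ref{th:dB01}. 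The architecture (admissibility of the angle data via \eqref{eq:Deltas}, invertibility of the recursion behind \eqref{eq:Jac=CS}, reduction to $s_0=1$, $\ell_0=1$ by the scaling action) is sound.

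The one step that would not go through as you cite it is the identification of $\Psi_H$ with this composite map, which is the actual content of the theorem and which you defer as ``essentially the computation recorded before the statement, combined with \eqref{eq:KrSti01} and \eqref{eq:KrSti02}.'' Those formulas presuppose $P_n(0)\neq0$, i.e.\ $\Delta_{1,n}\neq0$, for every $n$, so they only cover the Stieltjes-type situation in which no $\theta_n$ lies in $\pi\Z$ --- but allowing $\Delta_{1,n}=0$ is precisely what distinguishes Hamburger Hamiltonians. In the degenerate case the verification has to be run through \eqref{eq:thetal=PQ1}--\eqref{eq:thetal=PQ2} instead: writing $P_n(0)=\sqrt{\ell_n}\sin(\theta_n)$ and $Q_n(0)=-\sqrt{\ell_n}\cos(\theta_n)$, the formula for $b_n$ in \eqref{eq:Jac=CS} is exactly the Wronskian identity \eqref{eq:JacWronsk} evaluated at $z=0$, and the formula for $a_n$ follows from the three-term recurrence \eqref{eq:rec1} (applied to both $P_n$ and $Q_n$) at $z=0$; this argument works uniformly whether or not $\theta_n\in\pi\Z$ and is what should replace the appeal to \eqref{eq:KrSti01}--\eqref{eq:KrSti02}. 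Two smaller points: in the finite-$N$ case you never say how the data of the last interval (its length $l_N\in(0,\infty]$, equivalently regularity of the Hamiltonian) is read off from the moments --- it is encoded by whether the construction terminates with a vanishing $\Delta_{1,n}$ (an $H_0$-tail, so $L$ finite, corresponding to $0\in\supp(\rho_0)$) or with $\Delta_{1,n}\neq0$ (infinite last interval); and your closing appeal to convergence of \eqref{eq:StiCFmodinfinite} to the Weyl--Titchmarsh function, ``implicit in \eqref{eq:WTjacobi},'' is both unnecessary for the bijection (your strictly positive case is purely algebraic) and shaky as stated, since the limit \eqref{eq:WTjacobi} is only guaranteed when the moment problem is determinate.
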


Combining this result with Theorem \ref{th:HambCorresp}, we obtain a one-to-one correspondence between Hamburger Hamiltonians with infinite $N$ as well as $\ell_0=1$ and semi-infinite symmetric Jacobi matrices (cf. \eqref{eq:Jac=CS}). Notice that if $\{P_n\}_{n\ge 0}$ and $\{Q_n\}_{n\ge 0}$ are the corresponding orthogonal polynomials defined in \eqref{eq:P_n} and \eqref{eq:Qn}, then the formulas in \eqref{eq:CSmoments} read 
\begin{align}\label{eq:thetal=PQ1}
\cot(\theta_{n}) & = -\frac{Q_n(0)}{P_n(0)}, & \ell_n & = |P_n(0)|^2 + |Q_n(0)|^2,
\end{align}
and conversely 
\begin{align}\label{eq:thetal=PQ2}
P_n(0) & = \sqrt{\ell_n}\sin(\theta_n), & Q_n(0) & = - \sqrt{\ell_n}\cos(\theta_n).
\end{align}

\begin{remark}\label{rem:IPsystems}
Let $\{s_k\}_{k\ge 0}$ be a positive sequence and $H_{\cL,\Theta}$ the corresponding Hamburger Hamiltonian. Let us emphasize that the associated Weyl--Titchmarsh function \eqref{eq:WTforCSfinite} admits an integral representation of the form  
\be 
 m(z) = \int_{\R} \frac{\rho_0(d\lambda)}{\lambda-z}, \quad z\in\C\backslash\R,
\ee
for some finite positive Borel measure $\rho_0$ on $\R$, which is a solution of the corresponding Hamburger moment problem. 
\end{remark}

Again, we are able to characterize determinate Hamburger moment problems in terms of the corresponding Hamiltonian (see \cite{ka99a,ka99b}).

\begin{theorem}[I.\ S.\ Kac]\label{th:kac2}
  Let $\{s_k\}_{k\ge 0}$ be a strictly positive sequence. Then the following conditions are equivalent:
\begin{enumerate}[label=(\roman*), ref=(\roman*), leftmargin=*, widest=iiii]
\item The Hamburger moment problem \eqref{eq:Hamburger} is determinate.
\item The Hamburger Hamiltonian $H_{\cL,\Theta}$ is singular.
\item The series $\sum_{n\ge 0} \ell_n$ diverges.
\end{enumerate}
\end{theorem}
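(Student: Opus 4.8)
The plan is to establish the theorem by reducing it to the dictionary already assembled, chiefly Theorem~\ref{th:Akh} and the explicit description~\eqref{eq:thetal=PQ1} of the data of the Hamburger Hamiltonian $H_{\cL,\Theta}$ attached to $\{s_k\}_{k\ge0}$. First I would normalise so that $s_0=1$; one checks that determinacy of the moment problem, the divergence of $\sum_{n\ge0}\ell_n$, and the finiteness of the length $L$ are all unchanged by the substitution $s_k\mapsto c s_k$ (this is read off from the scaling behaviour of $P_n(0)$ and $Q_n(0)$, which in turn comes from that of the Hankel determinants). Since $\{s_k\}_{k\ge0}$ is strictly positive, every $\Delta_{0,n}$ is positive, so the quantities $\cot(\theta_n)$ and $\ell_n$ in~\eqref{eq:CSmoments} are defined for every $n\ge0$; hence the Hamburger Hamiltonian produced by $\Psi_H$ has $N=\infty$ and is defined on the interval $[0,L)$ with $L=\sum_{n\ge0}\ell_n$, and~\eqref{eq:thetal=PQ1} gives
\[
 \sum_{n\ge0}\ell_n=\sum_{n\ge0}\bigl(|P_n(0)|^2+|Q_n(0)|^2\bigr).
\]

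Armed with this identity, the equivalence $(i)\Leftrightarrow(iii)$ is immediate from the equivalence $(i)\Leftrightarrow(iii)$ of Theorem~\ref{th:Akh}: the Hamburger moment problem is determinate precisely when $\sum_{n\ge0}(|P_n(0)|^2+|Q_n(0)|^2)$ diverges, and by the displayed identity this series is exactly $\sum_{n\ge0}\ell_n$. For $(ii)\Leftrightarrow(iii)$ I would invoke directly the definition of a regular/singular Hamiltonian from Section~\ref{sec:04}: a Hamiltonian is regular exactly when it is defined on a finite interval, so $H_{\cL,\Theta}$ is regular if and only if $L=\sum_{n\ge0}\ell_n<\infty$, hence singular if and only if $\sum_{n\ge0}\ell_n$ diverges. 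Together these close the cycle $(i)\Leftrightarrow(iii)\Leftrightarrow(ii)$.

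I do not expect a real obstacle here: the substance is carried entirely by Theorems~\ref{th:Akh} and~\ref{th:kac} and by the identities~\eqref{eq:CSmoments}--\eqref{eq:thetal=PQ2}, and what is left is essentially the observation that the single series $\sum_{n\ge0}\ell_n$ simultaneously represents the total length of $H_{\cL,\Theta}$ and the series from item~(iii) of Theorem~\ref{th:Akh}. The only points deserving attention are the bookkeeping ones flagged above, namely that the normalisation $s_0=1$ is harmless and that a Hamburger Hamiltonian with $N=\infty$ but $L<\infty$ still counts as regular under the convention adopted in Section~\ref{sec:04} (it does, since there ``regular'' means precisely ``defined on a finite interval''). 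Alternatively one could route $(i)\Leftrightarrow(ii)$ through the canonical-systems picture and the correspondence of Appendix~\ref{sec:appA}, but the argument above is the shortest.
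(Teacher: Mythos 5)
Your argument is correct. The paper itself does not prove Theorem~\ref{th:kac2} (it is quoted from Kac's papers \cite{ka99a,ka99b}), but your route is precisely the one the paper's own formulas make available, and it is the same strategy the paper actually carries out for the analogous Theorem~\ref{th:determinacy} on Krein--Langer strings: the identity $\ell_n=|P_n(0)|^2+|Q_n(0)|^2$ from \eqref{eq:thetal=PQ1} turns condition (iii) of Theorem~\ref{th:Akh} into the divergence of $\sum_{n\ge0}\ell_n$, giving $(i)\Leftrightarrow(iii)$, while $(ii)\Leftrightarrow(iii)$ is immediate from the convention of Section~\ref{sec:04} that a Hamiltonian is regular exactly when it lives on a finite interval, since for a strictly positive sequence the Hamburger Hamiltonian has $N=\infty$ and total length $L=\sum_{n\ge0}\ell_n$. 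Two bookkeeping remarks. First, when $\Delta_{1,n}=0$ the first formula in \eqref{eq:CSmoments} does not define $\cot(\theta_n)$ and one must fall back on \eqref{eq:CSmoments2}; nevertheless $\ell_n$ is always well defined and positive by \eqref{eq:Deltas}, and the identity $\ell_n=|P_n(0)|^2+|Q_n(0)|^2$ holds in all cases via \eqref{eq:PQzero}, so your chain of equivalences is unaffected. Second, your normalisation step is indeed harmless: replacing $s_k$ by $cs_k$ sends $|P_n(0)|^2$ to $c^{-1}|P_n(0)|^2$ and $|Q_n(0)|^2$ to $c\,|Q_n(0)|^2$, so the new $\ell_n$ is comparable (up to the constants $c^{\pm1}$) to the old one, and neither determinacy nor the divergence of $\sum_{n\ge0}\ell_n$ is changed.
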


\section{The Hamburger moment problem and Krein--Langer strings}\label{sec:06}

\subsection{Generalized indefinite strings} Let us first briefly review some facts about generalized indefinite strings; for more details we refer the reader to \cite{IndefiniteString, CHPencil, CHString}. 
 To this end, fix some $L\in(0,\infty]$, let $\omega\in H^{-1}_{\loc}([0,L))$ be a real-valued distribution on $[0,L)$ and $\dip$ be a positive Borel measure on $[0,L)$.  
 We will first discuss the meaning of the differential equation 
 \begin{align}\label{eqnDEho}
  -f''  = z  f \omega + z^2 f \dip, 
 \end{align}
 where $z$ is a complex spectral parameter. 
 Of course, this equation has to be understood in a distributional sense:   
  A solution of~\eqref{eqnDEho} is a function $f\in H^1_{\loc}([0,L))$ such that 
 \begin{align}
  \Delta_f h(0) + \int_{0}^L f'(x) h'(x) dx = z\, \omega(fh) + z^2 \int_{[0,L)} f(t)h(t) \,\dip(dt), \quad h\in H^1_{\cc}([0,L)),
 \end{align}
 for some constant $\Delta_f\in\C$. 
 In this case, the constant $\Delta_f$ is uniquely determined and will henceforth always be denoted with $f'\NL$ for apparent reasons.  
 Of course, there are also several other ways of introducing the same notion of solutions. 
 Upon choosing particular test functions $h_x\in H^1_{\cc}([0,L))$ given by 
 \begin{align}
  h_x(t) = \begin{cases} x-t, & t\in[0,x), \\ 0, & t\in[x,L), \end{cases}
 \end{align}
 for every $x\in[0,L)$, one observes that a function $f\in H^1_{\loc}([0,L))$ is a solution of~\eqref{eqnDEho} if and only if one has 
 \begin{align}\label{eqnDEinhoInt}
   f(x) & = f(0) + f'\NL x - z\,\omega(fh_x) - z^2 \int_{[0,L)} f(t)h_x(t)\, \dip(dt), \quad x\in[0,L). 
 \end{align}
  Note that this formulation simply reduces to the usual integral equation (as used in, for example, \cite[\S1]{kakr74}, \cite[Section~1]{la76}, see also \cite{CHPencil}) if $\omega$ is a Borel measure:
  \be\label{eq:IntEqn}
  f(x) = f(0) + f'(0-)x - z\int_{[0,x)} (x-t) f(t)\,\omega(dt) - z^2 \int_{[0,x)} (x-t) f(t)\,\dip(dt).
  \ee
 
For every $z\in\C$, we introduce the fundamental system of solutions $c(z,\redot)$, $s(z,\redot)$ of the differential equation~\eqref{eqnDEho} satisfying the initial conditions
 \begin{align}\label{eq:csat0}
  c(z,0)& = s'\NLz =1, &  c'\NLz & = s(z,0) =0.
 \end{align}
  This allows us to define the Weyl--Titchmarsh function $m$ by
  \begin{align}\label{eq:mFalt}
    m(z) = \lim_{x\rightarrow L} -\frac{c(z,x)}{zs(z,x)}, \quad z\in\C\backslash\R.
  \end{align}
 As a Herglotz--Nevanlinna function (see \cite{IndefiniteString}), the function $m$ has an integral representation of the form  \eqref{eq:Herglotz}--\eqref{eq:Herglotz2} again.

Similarly to Krein strings, a triple $(L,\omega,\dip)$ such that $L\in(0,\infty]$, $\omega$ is a real-valued distribution in $H^{-1}_{\loc}([0,L))$ and $\dip$ is a positive Borel measure on $[0,L)$ is called a {\em generalized indefinite string}. Such a string $(L,\omega,\dip)$ is called {\em regular} if the length $L$ is finite, $\omega\in H^{-1}([0,L))$ and $\dip([0,L))<\infty$, that is, if 
\be\label{eq:defSing}
L+ \int_0^L \Wr(x)^2 dx + \int_{[0,L)} \dip(dx) < \infty, 
\ee
where $\Wr\in L^2_{\mathrm{loc}}([0,L))$ is the anti-derivative of $\omega$ specified by  
\be\label{eq:defAnti}
  \omega(h) = - \int_0^L \Wr(x) h'(x)dx, \quad h\in H^1_{\mathrm{c}}([0,L)).
\ee
Otherwise, the string is called {\em singular}.
Note that although the class of generalized indefinite strings contains the class of Krein strings, the notion of regularity does not coincide on this subset. However, the regularity of Krein strings corresponds to the indeterminacy of the Stieltjes moment problem whereas the regularity of generalized indefinite strings correlates with the indeterminacy of the Hamburger moment problem (see Theorem~\ref{th:determinacy} below).

 \begin{theorem}[\cite{IndefiniteString}]\label{thmIP}
  For every Herglotz--Nevanlinna function $m$ there is a unique generalized indefinite string $(L,\omega,\dip)$ which has $m$ as its Weyl--Titchmarsh function. 
\end{theorem}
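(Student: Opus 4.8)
The plan is to derive the theorem from de Branges' theorem (Theorem~\ref{th:dB01}) by establishing a bijection between generalized indefinite strings and trace-normed Hamiltonians not equal to $H_0$, under which the respective Weyl--Titchmarsh functions correspond.

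\emph{From a string to a Hamiltonian.} Given a generalized indefinite string $(L,\omega,\dip)$ with anti-derivative $\Wr\in L^2_{\loc}([0,L))$ of $\omega$ normalized by $\Wr(0)=0$, I would introduce for each solution $f$ of~\eqref{eqnDEho} the quasi-derivative $f^{[1]}:=f'+z\,\Wr f$ and show, starting from the integral equation~\eqref{eqnDEinhoInt}, that the rescaled vector $Y:=(f,-z^{-1}f^{[1]})^\top$ is locally absolutely continuous with respect to the measure $\sigma:=(1+\Wr^2)\,dx+\dip$ and satisfies
\[
 \begin{pmatrix} 0 & 1 \\ -1 & 0 \end{pmatrix} dY = z\left[ \begin{pmatrix} \Wr^2 & \Wr \\ \Wr & 1 \end{pmatrix} dx + \begin{pmatrix} 1 & 0 \\ 0 & 0 \end{pmatrix} d\dip \right] Y;
\]
the rescaling of the second component is precisely what absorbs the would-be quadratic dependence on $z$. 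Reparametrizing by the increasing right-continuous function $\xi(x):=\sigma([0,x))$ converts this into a canonical system~\eqref{eq:II.01} on $[0,\xi(L^-))$ whose Hamiltonian $H$ is, in the $\xi$-variable, the normalization $(1+\Wr^2+\dip_{\ac}')^{-1}\bigl(\begin{smallmatrix}\Wr^2+\dip_{\ac}' & \Wr\\ \Wr & 1\end{smallmatrix}\bigr)$ on the part where $\dip$ is absolutely continuous (with density $\dip_{\ac}'$), an $H_0$-indivisible interval of $\xi$-length $\dip(\{x_0\})$ in place of each atom $x_0$ of $\dip$, and $H_{\pi/2}$ wherever the string carries no mass. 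By construction $H$ is real, symmetric, non-negative definite and trace normed, and it is never identically $H_0$. When $\xi(L^-)<\infty$ one extends $H$ by $H_0$ to all of $[0,\infty)$. Since $\xi$ intertwines the fundamental systems and, in the $x$-variable, the first components of the two columns of the canonical fundamental matrix $U$ equal $c(z,\cdot)$ and $-z\,s(z,\cdot)$ (by~\eqref{eq:csat0} and the definitions of $\sigma$ and $Y$), comparing~\eqref{eq:mFalt} with~\eqref{eqnWTmcansys} and~\eqref{eq:WTforCSfinite} yields $U_{11}/U_{12}=-c/(zs)$, so the Weyl--Titchmarsh functions agree.

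\emph{Bijectivity and conclusion.} The construction can be reversed: from a trace-normed Hamiltonian $H\not\equiv H_0$ one strips off the maximal $H_0$-indivisible intervals (reading their $\xi$-lengths as atoms of $\dip$, and the initial one, if any, as an initial mass-free stretch of the string), inverts the reparametrization on the complement to recover the parametrization $x=x(\xi)$ and the length $L$, and reads $\Wr$ off the normalized entries of $H$; putting $\omega:=\Wr'$ then produces a generalized indefinite string whose associated Hamiltonian is again $H$. Hence $(L,\omega,\dip)\mapsto H$ is a bijection onto the trace-normed Hamiltonians distinct from $H_0$, intertwining Weyl--Titchmarsh functions. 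For a given Herglotz--Nevanlinna function $m$, Theorem~\ref{th:dB01} supplies a unique such $H$ with Weyl--Titchmarsh function $m$, and its preimage is then the unique generalized indefinite string with Weyl--Titchmarsh function $m$.

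\emph{Main obstacle.} The crux is making the first step rigorous in the distributional setting: $\omega$ is only an element of $H^{-1}_{\loc}([0,L))$ and $\dip$ may have a singular part, so the rewriting of~\eqref{eqnDEho} as a $\sigma$-integrated first-order system, the claim that $\xi$ is a homeomorphism off the atoms of $\dip$, and the verification that the resulting matrix function is locally integrable and genuinely trace normed all have to be carried out with care. Equally delicate is the bookkeeping of the boundary phenomena that the inverse construction must match: an initial $H_0$-stretch versus a mass-free initial segment of the string (equivalently, the linear term $c_1 z$ in~\eqref{eq:Herglotz}), interior $H_0$-intervals versus atoms of $\dip$, the alternative $\xi(L^-)<\infty$ versus $\xi(L^-)=\infty$ (equivalently, regularity versus singularity of the string, cf.~\eqref{eq:defSing}), and the behaviour of $m$ near $z=0$ and $z=\infty$. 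This matching of the two regularity notions is where the argument requires real work.
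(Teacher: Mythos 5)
Your overall strategy is the correct one and is in fact the strategy of the cited source: the paper does not prove Theorem~\ref{thmIP} but quotes it from \cite{IndefiniteString}, and its Appendix~\ref{sec:appA} records precisely the reparametrization \eqref{eqnIPsigmatrans} and the formula \eqref{eqnHamequ} that your first step produces. Your normalized matrix $(1+\Wr^2+\dip_{\ac}')^{-1}\bigl(\begin{smallmatrix}\Wr^2+\dip_{\ac}' & \Wr\\ \Wr & 1\end{smallmatrix}\bigr)$ agrees with \eqref{eqnHamequ}, your first-order system for $Y=(f,-z^{-1}f^{[1]})^\top$ checks out against \eqref{eqnDEinhoInt}, and the identifications $U_{11}=c$, $U_{12}=-zs$ are correct, so the forward construction and the reduction to Theorem~\ref{th:dB01} are sound in outline.

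There is, however, a concrete error in your dictionary for the inverse map, and it breaks the bijectivity argument as written. You read the \emph{initial} maximal $H_0$-indivisible interval as ``an initial mass-free stretch of the string'' and equate the coefficient $c_1$ in \eqref{eq:Herglotz} with such a stretch. In fact the initial $H_0$-interval corresponds to an atom of $\dip$ at $x=0$ of mass $c_1=\dip(\{0\})$: your own forward construction sends an atom of $\dip$ at a point $x_k$ to an $H_0$-indivisible interval of $\xi$-length $\dip(\{x_k\})$ (beginning at $s=0$ when $x_k=0$), whereas it sends a mass-free initial segment $[0,x_0)$ to an $H_{\pi/2}$-indivisible interval, cf.\ the displayed formula for $H(s)$ in Appendix~\ref{sec:appA} and \eqref{eq:Htheta}. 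Correspondingly, $m(z)\sim c_1 z$ at $\infty$ for an atom of $\dip$ at $0$, while $m(z)\sim -1/(l_0 z)$ for a mass-free stretch $[0,x_0)$ — these are incompatible. With your reading, the forward and inverse maps do not compose to the identity on Hamiltonians with an initial $H_0$-interval, so the claimed bijection (and hence uniqueness of the string) fails exactly there. Separately, note that what you defer as the ``main obstacle'' — justifying the $\sigma$-integrated first-order rewriting when $\omega$ is only in $H^{-1}_{\loc}$, and proving surjectivity onto all trace-normed Hamiltonians $\not\equiv H_0$ — is not a technicality but the actual substance of the proof in \cite{IndefiniteString}; as it stands your text is a reduction scheme with one wrong boundary identification rather than a complete proof.
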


\subsection{Krein--Langer strings}\label{ss:5.2}
Let us now introduce a special class of generalized indefinite strings carrying only point masses $\omega_j$ and dipoles $\dip_j$ located at points $x_j$ which can only accumulate at $L$. More precisely, let $L\in (0,\infty]$, $\kappa\in \Z_{\ge 0}\cup\{\infty\}$, $\{x_j\}_{j=0}^{\kappa-1}$ be a sequence of reals such that
\begin{align}\label{eq:omegaKrSti1b}
0 & =:x_{-1}<x_0 < x_1 < \dots <L, 
\end{align}
$\{\omega_j\}_{j=0}^{\kappa-1}$ be a sequence of reals and $\{\dip_j\}_{j=0}^{\kappa-1}$ be a sequence of non-negative reals. 
For definiteness, we shall assume that
\be
|\omega_j| + \dip_j>0
\ee
for all $j\in \{0,\dots,\kappa-1\}$. 
In the case of infinitely many masses and dipoles, we shall assume that $L=\lim_{j\to \infty}x_j$. If the string has only finitely many masses and dipoles, then we shall assume that $x_{\kappa-1}<x_\kappa:=L$, that is, there is neither a point mass nor a dipole at the right end. 
Next,  we define 
\be
 l_j  := x_{j}-x_{j-1}
 \ee
for all $j\in\{0,\ldots,\kappa\}$. Finally,  we set
\begin{align}\label{eq:omegadipKL}
\omega & :=\sum_{j=0}^{\kappa-1} \omega_j\delta_{x_j} \in \cM([0,L)), & \dip &:=\sum_{j=0}^{\kappa-1} \dip_j\delta_{x_j} \in \cM^+([0,L)).
\end{align}
A generalized indefinite string $(L,\omega,\dip)$ of the above form \eqref{eq:omegaKrSti1b}--\eqref{eq:omegadipKL} will be called a {\em Krein--Langer string} (due to its first appearance in the work of M.\ G.\ Krein and H.\ Langer \cite{krla79}; see also \cite{la76}). 

 Let us now consider the corresponding spectral problem \eqref{eqnDEho}. Since $\omega$ and $\dip$ are both measures, the differential equation~\eqref{eqnDEho}  reduces to the integral equation~\eqref{eq:IntEqn}, which is nothing but  
\be\label{eqLintEqnKL}
f(x) = f(0) + f'(0-)x - \sum_{x_j<x} (x-x_j) (z\,\omega_j+z^2\dip_j)\, f(x_j),\quad x\in [0,L).
\ee
Clearly, the solution $f$ is thus continuous and piece-wise linear. Moreover, evaluating $f$ at the points $x_j$, we get
\begin{align} \label{eq:recKL1}
 \begin{split}
f'(x_j+) - f'(x_{j}-) & = - (z\,\omega_j + z^2\dip_j) f(x_j), \\
f(x_{j}) - f(x_{j-1}) & = l_{j} f'(x_{j-1}+) = l_{j} f'(x_{j}-).
 \end{split}
\end{align}
 In particular, the representation~\eqref{eqLintEqnKL} shows that the functions $c(\ledot,x)$ and $s(\ledot,x)$ are polynomials for every $x\in[0,L)$. 
 Upon setting
\be
\wt{m}_j(z):= -\frac{c(z,x_j)}{z\, s(z,x_j)},\quad z\in\C\backslash\R,
\ee
it is not difficult to see using \eqref{eq:recKL1} that $\wt{m}_j$ admits the following continued fraction expansion
\be\label{eq:cfracKL}
\wt{m}_j(z) = \cfrac{1}{-l_0\,z +  \cfrac{1}{\omega_0 + \dip_0\, z + \cfrac{1}{\,\ddots\, + \cfrac{1}{\omega_{j-1}+ \dip_{j-1}\, z + \cfrac{1}{-l_j\, z}}}}}\, ,\quad z\in\C\backslash\R,
\ee
for all $j\in \{0,\dots,\kappa\}$. Comparing this representation with the discussion in Subsection~\ref{ssecHH} suggests that there is a one-to-one correspondence between canonical systems with Hamburger Hamiltonians and Krein--Langer strings. The next result is due to M.\ G.\ Krein and H.\ Langer \cite{krla79}. 

\begin{theorem}[M.\ G.\ Krein--H.\ Langer]\label{th:KL}
Let $m$ be a rational Herglotz--Nevanlinna function that vanishes at $\infty$. 
Then there exists a unique Krein--Langer string $(L,\omega,\dip)$ with only finitely many masses and dipoles such that the corresponding Weyl--Titchmarsh function coincides with $m$. 
\end{theorem}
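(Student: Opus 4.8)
The plan is to reduce the statement to the already-established connection between rational Herglotz--Nevanlinna functions vanishing at $\infty$ and Hamburger Hamiltonians with finite $N$ (Corollaries~\ref{cor:HfinCFexpansion} and~\ref{cor:IPfinHamb}), and then transport the continued fraction expansion~\eqref{eq:mHfinCF} into the Krein--Langer string picture via~\eqref{eq:cfracKL}. Concretely, given a rational Herglotz--Nevanlinna function $m$ vanishing at $\infty$, Corollary~\ref{cor:IPfinHamb} (more precisely its proof, the Euclidean-type algorithm) produces an expansion
\be
m(z) = \cfrac{1}{-l_0\, z + \cfrac{1}{\omega_0 + \dip_0\, z + \cfrac{1}{\,\ddots\,  + \cfrac{1}{\omega_{\kappa-1} + \dip_{\kappa-1}\, z + \cfrac{1}{-l_{\kappa}\, z}}}}}\,, \quad z\in\C\backslash\R,
\ee
with $l_0,\dots,l_\kappa>0$, $\omega_j\in\R$, $\dip_j\in\R_{\ge0}$, and $|\omega_j|+\dip_j>0$ for every $j$ (the last condition is exactly the ``at least one of $\omega_0$ or $\dip_0$ is nonzero'' remark in that proof). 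From this data I would build a Krein--Langer string by reversing the recipe of Subsection~\ref{ss:5.2}: set $x_{-1}:=0$, $x_j := x_{j-1}+l_j$ for $j\in\{0,\dots,\kappa-1\}$, and $L:=x_{\kappa-1}+l_\kappa$ (which is finite), then define $\omega$ and $\dip$ by~\eqref{eq:omegadipKL}. This $(L,\omega,\dip)$ is a Krein--Langer string with finitely many masses and dipoles by construction.

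Next I would verify that its Weyl--Titchmarsh function is $m$. By the discussion around~\eqref{eqLintEqnKL}--\eqref{eq:cfracKL}, the functions $c(\redot,x)$ and $s(\redot,x)$ are polynomials, and $\wt m_\kappa(z) = -c(z,x_{\kappa-1})/(z\,s(z,x_{\kappa-1}))$ is given by the finite continued fraction~\eqref{eq:cfracKL} with $j=\kappa$, which is literally the right-hand side of the displayed expansion for $m$. Since the string has no mass or dipole on $[x_{\kappa-1},L)$, the solutions are affine there, so the limit in~\eqref{eq:mFalt} equals $\wt m_\kappa(z)$; hence the Weyl--Titchmarsh function of $(L,\omega,\dip)$ coincides with $m$. (Alternatively, one invokes Theorem~\ref{thmIP} to know such a string exists and is unique among \emph{all} generalized indefinite strings, and then only needs to exhibit \emph{one} Krein--Langer string with this Weyl--Titchmarsh function, which is what the construction does.)

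It remains to prove uniqueness \emph{within the class of Krein--Langer strings with finitely many masses and dipoles}. Here I would argue that for such a string the Weyl--Titchmarsh function, being $\wt m_\kappa$ from~\eqref{eq:cfracKL}, determines the finite continued fraction $[-l_0 z;\ \omega_0+\dip_0 z;\ \dots;\ -l_\kappa z]$ uniquely, hence all the coefficients $l_j,\omega_j,\dip_j$, hence $L$ and $(\omega,\dip)$. The reading-off of coefficients is the same Euclidean-algorithm uniqueness used in the proof of Corollary~\ref{cor:IPfinHamb}: the residue/pole structure at $\infty$ of $-1/m$ forces $l_0$; subtracting $-l_0 z$ and inverting again forces the pair $(\omega_0,\dip_0)$ from the behaviour near $\infty$ (the linear coefficient gives $\dip_0$, the constant term gives $\omega_0$); iterating forces the remaining coefficients and terminates after $\kappa$ steps; the terminal block $-l_\kappa z$ is pinned down because the next ``$\wt m$'' vanishes identically. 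Finally, the constraint $|\omega_j|+\dip_j>0$ rules out any degenerate shortening of the fraction, and $L=\sum_{j=0}^{\kappa}l_j$ is then determined.

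The main obstacle I anticipate is not any single deep step but rather the bookkeeping in matching conventions: one must check carefully that the recursion~\eqref{eq:cfracKL} derived from~\eqref{eq:recKL1} produces exactly the continued fraction~\eqref{eq:mHfinCF} coming from Lemma~\ref{lem:recursHn} and Corollary~\ref{cor:HfinCFexpansion} (including the treatment of the terminal $-l_\kappa z$ term and the role of the condition $\theta_N\notin\pi\Z$), and that the limit~\eqref{eq:mFalt} genuinely stabilizes to $\wt m_\kappa$ past the last singularity. If one prefers to avoid re-deriving~\eqref{eq:cfracKL}, the cleanest route is to cite Theorem~\ref{thmIP} for existence/uniqueness among all generalized indefinite strings and then note that the string built above lies in the Krein--Langer subclass, so the only real work is (i) the construction and (ii) the verification that its Weyl--Titchmarsh function equals $m$; uniqueness within the subclass is then immediate from uniqueness in the larger class.
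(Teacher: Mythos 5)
Your proposal is correct and follows essentially the same route as the paper: the paper's proof simply invokes the constructive Euclidean-type expansion from the proof of Corollary~\ref{cor:IPfinHamb} to obtain the representation~\eqref{eq:mHfinCF} and build the string, with uniqueness coming from Theorem~\ref{thmIP}, exactly as in your ``cleanest route.'' The only nitpick is a harmless index slip ($\wt{m}_\kappa$ is taken at $x_\kappa=L$, i.e.\ as the limit in~\eqref{eq:mFalt}, not at $x_{\kappa-1}$), which does not affect the argument.
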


\begin{proof}
The proof of Corollary \ref{cor:IPfinHamb} shows that the function $m$ has a representation of the form~\eqref{eq:mHfinCF}, which allows to construct the desired Krein--Langer string. 
\end{proof}

\subsection{Connection with the moment problem}
Theorem~\ref{th:KL} shows that there is a one-to-one correspondence between the set of positive sequences which are not strictly positive and the set of Krein--Langer strings having only finitely many point masses and dipoles. 
In fact, the coefficients of the Krein--Langer string corresponding to such a sequence are given by~\eqref{eq:l_nk}--\eqref{eq:dip_nk} and~\eqref{eq:CSmoments}. 
Our next aim is to establish the full analog of Theorem~\ref{th:kac} for Krein--Langer strings. To this end, let us suppose that $\{s_k\}_{k\ge 0}$ is a strictly positive sequence. 
For every $j\ge0$, we define $k(j)$ as the largest integer $k\ge0$ such that the number of nonzero elements of $\Delta_{1,0},\Delta_{1,1},\dots,\Delta_{1,k-1}$ is exactly $j$. 
According to this definition, the sequence $\{\Delta_{1,k(j)}\}_{j=0}^\infty$ enumerates all nonzero members of the sequence $\{\Delta_{1,k}\}_{k=0}^\infty$. 
In particular, note that we have $k(0)=0$ since $\Delta_{1,0}=1$.  
Now let us define 
\begin{align}\label{eq:lnkStr}
l_j & := \frac{\Delta^2_{1,k(j)}}{\Delta_{0,k(j)-1}\Delta_{0,k(j)}}, & x_j & := \sum_{i=0}^j l_i, & L & := \sum_{i\ge0} l_i, 
\end{align}
for every $j\ge0$ 
as well as  
\begin{align}\label{eq:lnkStr2}
\omega_j & := \frac{\Delta^2_{0,k(j)}}{\Delta_{1,k(j)}\Delta_{1,k(j)+1}}, & \dip_j & := 0,
\end{align}
if $k(j+1)-k(j)=1$, and 
\begin{align}\label{eq:lnkStr3}
\omega_j & :=\frac{\Delta_{-1,k(j)}}{\Delta_{1,k(j)}} - \frac{\Delta_{-1,k(j+1)}}{\Delta_{1,k(j+1)}}, & 
\dip_j & := \frac{\Delta^2_{-1,k(j)+1}}{\Delta_{0,k(j)}\Delta_{0,k(j)+1}},  
\end{align}
if $k(j+1)-k(j)=2$. 
It follows from~\eqref{eq:Deltas} that there are no consecutive zeros within the sequence $\{\Delta_{1,k}\}_{k=0}^\infty$, which ensures that the above quantities are well-defined.  

\begin{theorem}\label{th:main}
The map
\be
\Psi_{\mathcal{S}}\colon \{s_k\}_{k\ge 0}\mapsto (L,\omega,\dip)
\ee
where $L\in (0,\infty]$, $\omega\in \cM([0,L))$ and $\dip\in\cM^+([0,L))$ are defined by \eqref{eq:omegaKrSti1b}--\eqref{eq:omegadipKL} and \eqref{eq:lnkStr}--\eqref{eq:lnkStr3} establishes a one-to-one correspondence between the set of positive sequences and the set of Krein--Langer strings. 
\end{theorem}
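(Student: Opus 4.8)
The plan is to split the argument into two parts: the strictly positive case and the non-strictly-positive case, the latter having been essentially handled already by Theorem~\ref{th:KL}. For the non-strictly-positive case, recall that by Remark~\ref{rem:positive} such a sequence is a moment sequence of a measure supported on finitely many points, hence its Weyl--Titchmarsh function $m(z)=\int_\R(\lambda-z)^{-1}\rho_0(d\lambda)$ (see Remark~\ref{rem:IPsystems}) is a rational Herglotz--Nevanlinna function vanishing at $\infty$, and Theorem~\ref{th:KL} produces the unique Krein--Langer string with finitely many masses and dipoles realizing it. Moreover one checks, via~\eqref{eq:l_nk}--\eqref{eq:dip_nk} and~\eqref{eq:CSmoments}, that these coefficients agree with those defined by~\eqref{eq:lnkStr}--\eqref{eq:lnkStr3}; here one uses that $\Delta_{1,n}=0$ for $n\ge N$, so the two-step jumps $k(j+1)-k(j)=2$ correspond exactly to the zero positions, and the formula for $\dip_j$ in~\eqref{eq:lnkStr3} reproduces $\ell_{k(j)+1}$ through~\eqref{eq:CSmoments}.

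For the strictly positive case, the key point is to identify the continued fraction~\eqref{eq:StiCFmodinfinite} built from the coefficients~\eqref{eq:lnkStr}--\eqref{eq:lnkStr3} with the continued fraction~\eqref{eq:CFjacobiInf} of the associated Jacobi matrix $J(a,b)$. First I would observe that~\eqref{eq:cfracKL} exhibits $\wt{m}_j$, the truncated Weyl--Titchmarsh function of the Krein--Langer string up to $x_{k(j)}$, as a finite continued fraction in the $l_i,\omega_i,\dip_i$. I would then show — by a direct block computation of the transfer matrices over the indivisible-type pieces, exactly as in Lemma~\ref{lem:recursHn} and its corollaries, or by invoking the Hamburger-Hamiltonian correspondence of Theorem~\ref{th:kac} together with the one-to-one correspondence between Hamburger Hamiltonians and Krein--Langer strings signalled after~\eqref{eq:cfracKL} — that the $j$-step convergent of~\eqref{eq:StiCFmodinfinite} equals $m_{k(j)}(z)=-Q_{k(j)}(z)/P_{k(j)}(z)$, the Padé-type approximant with the asymptotic expansion~\eqref{eq:assPade}. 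This gives $\Psi_{\mathcal S}=\Psi_S^+\circ(\text{interpolation})$ in spirit: the Krein--Langer string simply records the same data as the Jacobi matrix but with the zero positions of $\{\Delta_{1,k}\}$ turned into dipoles rather than being excluded. Injectivity of $\Psi_{\mathcal S}$ then follows from injectivity of $\Psi_J$ (Theorem~\ref{th:HambCorresp}): from the string one recovers all convergents $m_{k(j)}$, hence the expansion~\eqref{eq:assPade}, hence all moments $s_k$.

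For surjectivity onto the set of all Krein--Langer strings, I would argue in reverse: given an arbitrary Krein--Langer string $(L,\omega,\dip)$, form the convergents $\wt m_j$ of~\eqref{eq:cfracKL}; each is a rational Herglotz--Nevanlinna function vanishing at $\infty$, so it has an asymptotic expansion $-\sum_k s_k^{(j)} z^{-k-1}+\OO(z^{-2k(j)-1})$, and the nesting property of the continued fraction forces $s_k^{(j)}$ to be independent of $j$ for $k$ small, yielding a well-defined sequence $\{s_k\}_{k\ge0}$. One checks this sequence is positive because each $\wt m_j$, being Herglotz--Nevanlinna, has non-negative Hankel determinants in its expansion (this is the content of Theorem~\ref{th:hamburger} read backwards, or can be seen from the positivity of the quadratic form associated with a truncated Jacobi matrix). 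Finally one verifies $\Psi_{\mathcal S}(\{s_k\})=(L,\omega,\dip)$ by matching the coefficients, using~\eqref{eq:Deltas} to see that the pattern of zeros in $\{\Delta_{1,k}\}$ is precisely dictated by which $\dip_j$ are nonzero.

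The main obstacle I anticipate is the bookkeeping in the strictly positive case: one must carefully track how the two-step jumps $k(j+1)-k(j)=2$ (corresponding to $\Delta_{1,k(j)+1}=0$) encode a dipole of mass $\dip_j=\Delta_{-1,k(j)+1}^2/(\Delta_{0,k(j)}\Delta_{0,k(j)+1})$, and prove that the resulting continued fraction~\eqref{eq:StiCFmodinfinite} really collapses to~\eqref{eq:CFjacobiInf} after eliminating the intermediate levels — this is an algebraic identity among the determinants $\Delta_{-1,n},\Delta_{0,n},\Delta_{1,n}$ that has to be pinned down, most cleanly by going through the Hamburger Hamiltonian picture of Section~\ref{sec:04} where~\eqref{eq:Jac=CS}, \eqref{eq:CSmoments} and Lemma~\ref{lem:recursHn} do exactly this work. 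Convergence of the infinite continued fraction in the indeterminate (regular-string) case also needs a word, but this follows from the general existence of a limiting integral representation~\eqref{eq:Herglotz}--\eqref{eq:Herglotz2} for the Weyl--Titchmarsh function of a generalized indefinite string (Theorem~\ref{thmIP}) combined with the fact that the moments can be read off~\eqref{eq:assPade}.
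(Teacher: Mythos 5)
Your proposal is correct and follows essentially the same route as the paper: reduce the non-strictly-positive case to Theorem~\ref{th:KL}, and handle strictly positive sequences by identifying a Krein--Langer string with the formal continued fraction~\eqref{eq:StiCFmodinfinite} via the convergents~\eqref{eq:cfracKL} and invoking the correspondence between such continued fractions and strictly positive sequences through the coefficient formulas~\eqref{eq:lnkStr}--\eqref{eq:lnkStr3}. The paper's own proof is just a terser version of this argument (it only notes surjectivity needs checking and sends $j\to\infty$ in~\eqref{eq:cfracKL}), while the bookkeeping you flag as the main obstacle is exactly what the paper delegates to Sections~\ref{sec:02}--\ref{sec:04} and Appendix~\ref{sec:appA}.
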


\begin{proof}
In view of Theorem \ref{th:KL}, it suffices to prove  the claim only for strictly positive sequences. 
Moreover, we just need to show that the map is surjective.
 However, this follows from the continued fraction expansion \eqref{eq:cfracKL}. 
 Indeed, sending $j$ to infinity there, we see that every Krein--Langer string can be identified with a formal infinite continued fraction of this type. It remains to use the one-to-one correspondence between continued fractions of this type and strictly positive sequences as well as noting that the coefficients therein are related via~\eqref{eq:lnkStr}--\eqref{eq:lnkStr3}.
\end{proof}

\begin{remark}
One can also prove Theorem~\ref{th:main} by combining Kac's Theorem \ref{th:kac} with the transformation connecting canonical systems with generalized indefinite strings; see \cite[Section~6]{IndefiniteString} and Appendix~\ref{sec:appA}.
\end{remark}

\begin{corollary}\label{cor:KLstrOPL}
Let $\{s_k\}_{k\ge 0}$ be a strictly positive sequence  and $(L,\omega,\dip)$ the corresponding Krein--Langer string. If $\{P_n\}$ and $\{Q_n\}$ are the corresponding orthogonal polynomials, then
\begin{align}\label{eq:lnkStr4}
l_j & = |P_{k(j)}(0)|^2, & 
\omega_j & =  \frac{Q_{k(j)}(0)}{P_{k(j)}(0)} - \frac{Q_{k(j+1)}(0)}{P_{k(j+1)}(0)}, 
\end{align}
and 
\be\label{eq:lnkStr5}
\dip_j = \begin{cases} 0, & k(j+1) - k(j)=1, \\ |Q_{k(j)+1}(0)|^2, & k(j+1) - k(j)=2. \end{cases}
\ee
\end{corollary}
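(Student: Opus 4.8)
The plan is to reduce the statement to two explicit evaluations of the orthogonal polynomials at $z=0$ and then to match the result term by term against the defining formulas \eqref{eq:lnkStr}--\eqref{eq:lnkStr3} of the Krein--Langer string. The value of $P_n$ at zero is already recorded in \eqref{eq:3.02}, namely $P_n(0)=(-1)^n\Delta_{1,n}/\sqrt{\Delta_{0,n-1}\Delta_{0,n}}$, so that $|P_n(0)|^2=\Delta_{1,n}^2/(\Delta_{0,n-1}\Delta_{0,n})$. The one new computation is the analogous formula for $Q_n$: starting from the determinantal expression \eqref{eq:QnMoments}, one has $R_{n,0}(0)=0$ and $R_{n,k}(0)=s_{k-1}$ for $1\le k\le n$, so the last row of the $(n+1)\times(n+1)$ determinant becomes $(0,s_0,\dots,s_{n-1})$; cycling this row to the top at the cost of the factor $(-1)^n$ turns the determinant into $\Delta_{-1,n}$, whence $Q_n(0)=(-1)^n\Delta_{-1,n}/\sqrt{\Delta_{0,n-1}\Delta_{0,n}}$ and $|Q_n(0)|^2=\Delta_{-1,n}^2/(\Delta_{0,n-1}\Delta_{0,n})$. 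In particular, whenever $\Delta_{1,n}\neq0$ this yields the clean identity $Q_n(0)/P_n(0)=\Delta_{-1,n}/\Delta_{1,n}$.

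With these identities in hand the three claimed formulas follow essentially by inspection. Comparing $|P_{k(j)}(0)|^2$ with \eqref{eq:lnkStr} gives $l_j=|P_{k(j)}(0)|^2$. In the case $k(j+1)-k(j)=1$, formula \eqref{eq:lnkStr2} already sets $\dip_j=0$; in the case $k(j+1)-k(j)=2$, comparing $|Q_{k(j)+1}(0)|^2$ with \eqref{eq:lnkStr3} gives $\dip_j=|Q_{k(j)+1}(0)|^2$, so \eqref{eq:lnkStr5} holds (note that $Q_{k(j)+1}(0)\neq0$ here, since $\Delta_{1,k(j)+1}=0$ forces $\Delta_{-1,k(j)+1}\neq0$ by \eqref{eq:Deltas}). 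For $\omega_j$ in the case $k(j+1)-k(j)=2$, the identity $Q_n(0)/P_n(0)=\Delta_{-1,n}/\Delta_{1,n}$ applied at $n=k(j)$ and $n=k(j+1)$ turns \eqref{eq:lnkStr3} verbatim into $Q_{k(j)}(0)/P_{k(j)}(0)-Q_{k(j+1)}(0)/P_{k(j+1)}(0)$. For $\omega_j$ in the case $k(j+1)=k(j)+1$, I would instead evaluate the Wronskian identity \eqref{eq:JacWronsk} at $z=0$, which gives $Q_n(0)/P_n(0)-Q_{n+1}(0)/P_{n+1}(0)=-1/(b_nP_n(0)P_{n+1}(0))$, and then substitute \eqref{eq:3.02} and \eqref{eq:b_n} to see that the right-hand side equals $\Delta_{0,n}^2/(\Delta_{1,n}\Delta_{1,n+1})$, which is precisely \eqref{eq:lnkStr2}. (Equivalently, one reads off from this computation the Desnanot--Jacobi-type identity $\Delta_{-1,n}\Delta_{1,n+1}-\Delta_{-1,n+1}\Delta_{1,n}=\Delta_{0,n}^2$ and combines it with $Q_n(0)/P_n(0)=\Delta_{-1,n}/\Delta_{1,n}$.)

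I do not expect a genuine obstacle: the only real content is the evaluation of $Q_n$ at zero, and everything else is substitution together with bookkeeping for the reindexing $j\mapsto k(j)$ and the two-case split $k(j+1)-k(j)\in\{1,2\}$, where the bound $\le2$ is exactly the absence of consecutive zeros in $\{\Delta_{1,k}\}_{k\ge0}$, i.e.\ \eqref{eq:Deltas}. The only step that needs a short calculation rather than mere inspection is reconciling the two equivalent expressions for $\omega_j$ in the case $k(j+1)-k(j)=1$, and even that reduces immediately to \eqref{eq:JacWronsk} evaluated at zero together with \eqref{eq:b_n}.
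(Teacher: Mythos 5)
Your proposal is correct and follows essentially the same route as the paper: the paper's proof likewise rests on the evaluations $P_n(0)=(-1)^n\Delta_{1,n}/\sqrt{\Delta_{0,n-1}\Delta_{0,n}}$ and $Q_n(0)=(-1)^n\Delta_{-1,n}/\sqrt{\Delta_{0,n-1}\Delta_{0,n}}$ together with the identity $\Delta_{1,n+1}\Delta_{-1,n}-\Delta_{1,n}\Delta_{-1,n+1}=\Delta_{0,n}^2$ obtained by evaluating \eqref{eq:JacWronsk} at zero and using \eqref{eq:b_n}, and then compares with \eqref{eq:lnkStr}--\eqref{eq:lnkStr3}. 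Your explicit derivation of $Q_n(0)$ by row-cycling in \eqref{eq:QnMoments} just spells out a step the paper leaves implicit.
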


\begin{proof}
This follows readily upon comparing \eqref{eq:lnkStr}--\eqref{eq:lnkStr3} with 
\begin{align}\label{eq:PQzero}
 P_n(0) & = (-1)^n \frac{\Delta_{1,n}}{\sqrt{\Delta_{0,n-1}\Delta_{0,n}}}, & Q_n(0) & = (-1)^n \frac{\Delta_{-1,n}}{\sqrt{\Delta_{0,n-1}\Delta_{0,n}}},
\end{align}
also employing the relation 
\[
\Delta_{1,n+1}\Delta_{-1,n} - \Delta_{1,n}\Delta_{-1,n+1} = \Delta_{0,n}^2, \quad n\ge0,
\]
which follows from evaluating~\eqref{eq:JacWronsk}  at zero and using~\eqref{eq:b_n}.
\end{proof}

The next result extends Stieltjes' formulas \eqref{eq:sumsStieltjes} to the case of positive sequences and generalized indefinite strings.
 
\begin{corollary}\label{cor:StiExtended}
If $\{s_k\}_{k\ge 0}$ is a strictly positive sequence and $(L,\omega,\dip)$ is the corresponding Krein--Langer string, then 
\begin{align}\label{eq:tr01}
x_j = \sum_{i=0}^j l_i & = \frac{\Delta_{2,k(j)}}{\Delta_{0,k(j)}},\\
\omega([0,x_j)) =\sum_{i=0}^{j-1}\omega_i & = -\frac{\Delta_{-1,k(j)}}{\Delta_{1,k(j)}},\label{eq:tr02} \\
 \int_0^{x_j} \Wr(x)^2 dx + \sum_{i=0}^{j-1} \dip_i & = -\frac{\Delta_{-2,k(j)}}{\Delta_{0,k(j)}},  \label{eq:tr03}
\end{align}
where we defined the additional determinants 
\begin{align}\label{eq:Delta-2}
 \Delta_{-2,0} & := 0, & \Delta_{-2,n} & := \begin{vmatrix} 0 & 0 & s_0 & \dots &s_{n-1} \\ 0& s_0 & s_1 & \dots & s_{n}  \\  s_0 & s_1 & s_2 & \dots & s_{n+1}\\  \vdots & \vdots & \vdots & \ddots & \vdots \\ s_{n-1} & s_{n} & s_{n+1} & \dots & s_{2n} \end{vmatrix},\quad n\ge 1.
\end{align}
\end{corollary}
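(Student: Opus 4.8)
The plan is to establish the three formulas by viewing the partial Krein--Langer string truncated at $x_j$ as a regular generalized indefinite string, and then identifying the three quantities on the left-hand sides as spectral-theoretic invariants that can be read off the continued fraction \eqref{eq:cfracKL}. The first identity \eqref{eq:tr01} is the easiest: summing the definition \eqref{eq:lnkStr} of $l_i$ telescopes nothing directly, so instead I would invoke Corollary~\ref{cor:KLstrOPL}, which gives $l_i = |P_{k(i)}(0)|^2$, together with the fact that the partial sums $\sum_{i=0}^j |P_{k(i)}(0)|^2$ can be compared with Stieltjes' formula \eqref{eq:sumsStieltjes}. The subtlety is that the string indexing $j$ runs over the nonzero Hankel determinants $\Delta_{1,k}$ while the orthogonal-polynomial indexing $n$ runs over all of $\Z_{\ge 0}$; one must check that the contributions of the indices $n$ with $\Delta_{1,n}=0$ (i.e.\ the dipole positions) drop out of the sum $\sum_{k=0}^{n}|P_k(0)|^2$, or rather are absorbed. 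Concretely, $\Delta_{1,n}=0$ forces $P_n(0)=0$ by \eqref{eq:PQzero}, so those terms contribute zero to $\sum_k |P_k(0)|^2$, and hence $\sum_{i=0}^j l_i = \sum_{k=0}^{k(j)} |P_k(0)|^2 = \Delta_{2,k(j)}/\Delta_{0,k(j)}$ by the first formula in \eqref{eq:sumsStieltjes}. This yields \eqref{eq:tr01}.

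For \eqref{eq:tr02} the argument is parallel but now using the telescoping structure already visible in \eqref{eq:lnkStr4}: $\sum_{i=0}^{j-1}\omega_i = \frac{Q_0(0)}{P_0(0)} - \frac{Q_{k(j)}(0)}{P_{k(j)}(0)} = -\frac{Q_{k(j)}(0)}{P_{k(j)}(0)}$, since $Q_0\equiv 0$. Then the second formula in \eqref{eq:sumsStieltjes}, together with \eqref{eq:PQzero}, identifies this with $-\Delta_{-1,k(j)}/\Delta_{1,k(j)}$. One should double-check that the telescoping in \eqref{eq:lnkStr4} is genuinely valid across a dipole, i.e.\ when $k(j+1)-k(j)=2$; but \eqref{eq:lnkStr3} is precisely designed so that $\omega_j = \frac{Q_{k(j)}(0)}{P_{k(j)}(0)} - \frac{Q_{k(j+1)}(0)}{P_{k(j+1)}(0)}$ holds in that case as well (here one uses that $P_{k(j)+1}(0)=0$ and the Wronskian identity \eqref{eq:JacWronsk} at zero to see that the ``missing'' intermediate term is harmless), so the sum still telescopes cleanly. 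This gives \eqref{eq:tr02}.

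The genuinely new and hardest identity is \eqref{eq:tr03}, because its left-hand side mixes a continuous Dirichlet-type energy $\int_0^{x_j}\Wr(x)^2\,dx$ with the discrete dipole sum $\sum_{i=0}^{j-1}\dip_i$. The plan is: first compute $\Wr$ explicitly for a Krein--Langer string. Since $\omega = \sum \omega_i \delta_{x_i}$, its anti-derivative $\Wr$ specified by \eqref{eq:defAnti} is a step function, $\Wr(x) = -\sum_{x_i<x}\omega_i = \Delta_{-1,k(i)}/\Delta_{1,k(i)}$ on the interval $[x_i,x_{i+1})$ by \eqref{eq:tr02} already proved (up to the sign and indexing bookkeeping). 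Hence $\int_0^{x_j}\Wr(x)^2\,dx = \sum_{i=0}^{j-1} l_{i+1}\bigl(\sum_{m=0}^{i}\omega_m\bigr)^2$, a finite sum of squares of partial $\omega$-sums weighted by the gap lengths. Combining with $\dip_i$ from \eqref{eq:lnkStr5} (namely $\dip_i = |Q_{k(i)+1}(0)|^2$ when there is a dipole), the claim \eqref{eq:tr03} becomes a purely algebraic identity among the orthogonal-polynomial values $P_k(0)$, $Q_k(0)$ and the Hankel determinants. I expect the cleanest route is to recognize the right-hand side $-\Delta_{-2,k(j)}/\Delta_{0,k(j)}$ as the analogue, one level down, of the Stieltjes formula: just as $\sum l_i$ is $\Delta_{2,k(j)}/\Delta_{0,k(j)}$ and $\sum\omega_i$ is $-\Delta_{-1,k(j)}/\Delta_{1,k(j)}$, the quantity $-\Delta_{-2,k(j)}/\Delta_{0,k(j)}$ should equal $\sum_{k=0}^{k(j)}|Q_k(0)|^2$. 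One proves this last determinant identity either by a cofactor-expansion/induction argument directly on \eqref{eq:Delta-2}, or — more in the spirit of the paper — by evaluating at zero the Christoffel--Darboux-type summation formula for the $Q_k$, exactly as \eqref{eq:sumsStieltjes} for the $P_k$ comes from Stieltjes \cite{sti}. Then \eqref{eq:tr03} follows once one checks the bookkeeping: the terms $\sum_{k}|Q_k(0)|^2$ split into the indices where $\Delta_{1,k}\neq 0$ (contributing, via the step-function computation of $\int\Wr^2$, the weighted partial-sum-squares, after an Abel summation that converts $\sum l_{i+1}(\sum_{m\le i}\omega_m)^2$ into $\sum |Q_{k(i)}(0)|^2$ using \eqref{eq:thetal=PQ2}-type relations) and the indices immediately after a zero of $\Delta_{1,k}$ (contributing exactly the dipole terms $|Q_{k(i)+1}(0)|^2$). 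The main obstacle is precisely this last reconciliation — turning the Abel-summed continuous energy into the ``expected'' sum $\sum|Q_k(0)|^2$ and verifying it dovetails with the dipole contributions — together with proving the determinant identity $\sum_{k=0}^{n}|Q_k(0)|^2 = -\Delta_{-2,n}/\Delta_{0,n}$, which is the new combinatorial input and where I would expect to spend most of the effort.
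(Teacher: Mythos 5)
Your proposal follows essentially the same route as the paper: \eqref{eq:tr01} and \eqref{eq:tr02} via $l_i=|P_{k(i)}(0)|^2$, the vanishing of $P_n(0)$ at the dipole indices, the telescoping in \eqref{eq:lnkStr4} and Stieltjes' formulas \eqref{eq:sumsStieltjes}, and \eqref{eq:tr03} by identifying its left-hand side with $\sum_{n=0}^{k(j)}|Q_n(0)|^2$. The two points you flag as the main effort are in fact light: the reconciliation of the continuous energy needs no Abel summation, since term by term $l_i\,\omega([0,x_i))^2=|P_{k(i)}(0)|^2\bigl(Q_{k(i)}(0)/P_{k(i)}(0)\bigr)^2=|Q_{k(i)}(0)|^2$, and the determinant identity $\sum_{n=0}^{N}|Q_n(0)|^2=-\Delta_{-2,N}/\Delta_{0,N}$ is exactly what the paper disposes of by citing \cite[Theorem~A.6]{sim98} rather than proving it afresh.
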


\begin{proof}
Let $\{P_n\}$ and $\{Q_n\}$ be the orthogonal polynomials associated with the strictly positive sequence $\{s_k\}_{k\ge 0}$. Since 
\be\label{eq:sumP}
x_j = \sum_{i=0}^j l_i = \sum_{i=0}^j |P_{k(i)}(0)|^2 = \sum_{n=0}^{k(j)} |P_{n}(0)|^2 = \frac{\Delta_{2,k(j)}}{\Delta_{0,k(j)}}, 
\ee
we arrive at the first equality. The second equality follows from 
\[
    \omega([0,x_j)) =\sum_{i=0}^{j-1}\omega_i  =  - \frac{Q_{k(j)}(0)}{P_{k(j)}(0)} = -\frac{\Delta_{-1,k(j)}}{\Delta_{1,k(j)}},
\] 
where we used~\eqref{eq:lnkStr4} and~\eqref{eq:PQzero}.
For the last equality, on the one hand we have 
\[ 
  \int_0^{x_j} \Wr(x)^2 dx = \sum_{i=0}^{j} l_{i} \omega([0,x_i))^2 = \sum_{i=0}^{j} |Q_{k(i)}(0)|^2
\]
and on the other side also 
\[
\dip([0,x_j)) =  \sum_{i=0}^{j-1} \dip_i = \mathop{\sum_{i=0}^{j-1}}_{k(i+1)-k(i)=2} |Q_{k(i)+1}(0)|^2,
\]
which adds up to
\be\label{eq:sumQ}
  \int_0^{x_j} \Wr(x)^2 dx + \sum_{i=0}^{j-1} \dip_i = \sum_{n=0}^{k(j)} |Q_{n}(0)|^2
\ee
and thus we obtain~\eqref{eq:tr03} in view of \cite[Theorem~A.6]{sim98}. 
\end{proof}

Finally, we may again characterize determinate Hamburger moment problems in terms of the corresponding generalized indefinite string.

\begin{theorem}\label{th:determinacy}
  Let $\{s_k\}_{k\ge 0}$ be a strictly positive sequence. Then the following conditions are equivalent:
\begin{enumerate}[label=(\roman*), ref=(\roman*), leftmargin=*, widest=iiii]
\item The Hamburger moment problem \eqref{eq:Hamburger} is determinate.
\item The generalized indefinite string $(L,\omega,\dip)$ is singular.
\item The series below diverges; 
 \begin{align}
 \sum_{n\ge 0} l_{n+1} + l_{n+1} \Wr_{n}^2 + \dip_{n} & = \infty, & \Wr_n & :=\sum_{k=0}^n \omega_k.
 \end{align}
\end{enumerate}
\end{theorem}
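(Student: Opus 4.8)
The plan is to reduce everything to the classical characterization of determinacy recorded in Theorem~\ref{th:Akh}: for a strictly positive sequence $\{s_k\}_{k\ge0}$ with $s_0=1$ (which we may assume, since scaling $s_k\mapsto cs_k$ with $c>0$ affects none of the three conditions, cf.\ Remark~\ref{rem:scaling}), the Hamburger moment problem~\eqref{eq:Hamburger} is determinate if and only if the series $\sum_{n\ge0}\bigl(|P_n(0)|^2+|Q_n(0)|^2\bigr)$ diverges. The bridge to the generalized indefinite string $(L,\omega,\dip)=\Psi_{\mathcal S}(\{s_k\})$ is Corollary~\ref{cor:StiExtended}, which identifies the three quantities entering the regularity criterion~\eqref{eq:defSing} with partial sums of this very series along the subsequence $\{k(j)\}_{j\ge0}$.

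Concretely, I would first observe that $\kappa=\infty$: by~\eqref{eq:Deltas} the sequence $\{\Delta_{1,k}\}_{k\ge0}$ has no two consecutive zeros, so it has infinitely many nonzero entries, $k(j)$ is defined for all $j\ge0$ and $k(j)\to\infty$ as $j\to\infty$; consequently $x_j=\sum_{i=0}^j l_i\to L$. Passing to the limit $j\to\infty$ in the identities~\eqref{eq:tr01} and~\eqref{eq:tr03} of Corollary~\ref{cor:StiExtended} (monotone convergence for the integral, and $\dip([0,x_j))\uparrow\dip([0,L))$) then yields
\[
L=\sum_{n\ge0}|P_n(0)|^2,\qquad \int_0^L\Wr(x)^2\,dx+\int_{[0,L)}\dip(dx)=\sum_{n\ge0}|Q_n(0)|^2,
\]
so that the left-hand side of~\eqref{eq:defSing} equals $\sum_{n\ge0}\bigl(|P_n(0)|^2+|Q_n(0)|^2\bigr)$. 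By definition the string $(L,\omega,\dip)$ is singular exactly when this quantity is infinite, and by Theorem~\ref{th:Akh} this happens exactly when the Hamburger moment problem is determinate; this establishes the equivalence $(i)\Leftrightarrow(ii)$.

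For $(ii)\Leftrightarrow(iii)$ I would instead rewrite the same left-hand side of~\eqref{eq:defSing} directly in terms of the string data. Since $\Wr$ is the piecewise constant anti-derivative of $\omega=\sum_j\omega_j\delta_{x_j}$, equal to $\Wr_{j-1}=\sum_{k=0}^{j-1}\omega_k$ on the interval $(x_{j-1},x_j)$ (with the convention $\Wr_{-1}=0$), one gets $L=\sum_{j\ge0}l_j$, $\int_0^L\Wr(x)^2\,dx=\sum_{j\ge0}l_j\Wr_{j-1}^2$ and $\int_{[0,L)}\dip(dx)=\sum_{j\ge0}\dip_j$, which are precisely the expansions already used in the proof of Corollary~\ref{cor:StiExtended}. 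Adding the three and shifting the index (the $j=0$ terms contribute only the finite constant $l_0=|P_0(0)|^2=1$) gives
\[
L+\int_0^L\Wr(x)^2\,dx+\int_{[0,L)}\dip(dx)=1+\sum_{n\ge0}\bigl(l_{n+1}+l_{n+1}\Wr_n^2+\dip_n\bigr),
\]
so singularity of $(L,\omega,\dip)$ is equivalent to divergence of the series in $(iii)$.

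I expect the argument to be essentially bookkeeping; the only points requiring a little care are (a) justifying $\kappa=\infty$ and hence that $k(j)\to\infty$ and $x_j\to L$, which is where~\eqref{eq:Deltas} enters, and (b) the piecewise-constant description of $\Wr$ together with the off-by-one index shift between the enumeration $\{l_j,\omega_j,\dip_j\}_{j\ge0}$ of the string and the series in $(iii)$. As an alternative route one could combine Kac's Theorem~\ref{th:kac2} (determinacy $\Leftrightarrow$ $\sum_n\ell_n=\infty$ for the Hamburger Hamiltonian) with the transformation between canonical systems and generalized indefinite strings and the relations~\eqref{eq:l_nk}--\eqref{eq:dip_nk}, but the direct argument above via Corollary~\ref{cor:StiExtended} is the shortest.
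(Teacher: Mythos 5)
Your proposal is correct and follows essentially the same route as the paper: both rest on Theorem~\ref{th:Akh} together with the identities \eqref{eq:sumP} and \eqref{eq:sumQ} from the proof of Corollary~\ref{cor:StiExtended}, plus the direct expansion of $L+\int_0^L\Wr(x)^2dx+\int_{[0,L)}\dip(dx)$ as $l_0+\sum_{n\ge0}\bigl(l_{n+1}+l_{n+1}\Wr_n^2+\dip_n\bigr)$. The only difference is bookkeeping order (you prove $(i)\Leftrightarrow(ii)$ and then $(ii)\Leftrightarrow(iii)$, while the paper proves $(ii)\Leftrightarrow(iii)$ first and then $(i)\Leftrightarrow(iii)$), which is inessential.
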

 
 \begin{proof}
The equivalence $(ii)\Leftrightarrow (iii)$ is immediate from the definition (see \eqref{eq:defSing}) and the following equality for Krein--Langer strings
\[
L+ \int_0^L \Wr(x)^2 dx + \int_{[0,L)} \dip(dx) = l_0 + \sum_{n\ge 0} l_{n+1} + l_{n+1} \Wr_n^2 + \dip_n.
\]
 Hence it suffices to show that $(i)\Leftrightarrow (iii)$. By Theorem \ref{th:Akh}, the Hamburger moment problem is determinate if and only if the series $\sum_{n\ge 0} |P_n(0)|^2 + |Q_n(0)|^2$ diverges. It remains to use \eqref{eq:sumP} and \eqref{eq:sumQ}.
 \end{proof}
 
Note that the last condition in the previous result reduces to a criterion by Hamburger \cite[Theorem~0.5]{Akh} when the moment sequence $\{s_k\}_{k\ge 0}$ is strictly double positive.

\appendix

\section{Hamburger Hamiltonians and Krein--Langer strings}\label{sec:appA}

It has been established in \cite[Section~6]{IndefiniteString} that Hamiltonians $H$ are in one-to-one correspondence  with generalized indefinite strings $(L,\omega,\dip)$. 
Our aim here is to show that this transformation also gives a correspondence between Hamburger Hamiltonians and Krein--Langer strings. 
In conjunction with the results of \cite{ka99a, ka99b}, these facts can be used for an alternative proof of Theorem~\ref{th:main}.

\subsection{From Krein--Langer strings to Hamburger Hamiltonians}
To begin with, let $(L_S,\omega,\dip)$ be a Krein--Langer string as in Section \ref{ss:5.2}. We want to find the corresponding Hamiltonian $H$. Following \cite[Section~6]{IndefiniteString}, we first define the two functions $\Wr\colon [0,L_S)\to \R$ and $\Vr\colon [0,L_S)\to \R_{\ge0}$ by 
 \begin{align}\label{eq:WrVr}
\Wr(x) & :=\omega([0,x)) = \sum_{x_k<x}\omega_k, &  \Vr(x) & :=\dip([0,x)) = \sum_{x_k<x}\dip_k,
\end{align}
for all $x\in [0,L_S)$. Observe that one has 
\begin{align}\label{eq:WrVr_k}
\Wr(x) & = \Wr_n= \sum_{k=0}^{n} \omega_k, & \Vr(x) & = \Vr_n: = \sum_{k=0}^n\dip_k,
\end{align}
for all $x\in(x_{n},x_{n+1}]$. Next, we introduce the function $\varsigma:[0,L_S]\rightarrow[0,\infty]$ by 
  \begin{align}\label{eqnIPsigmatrans}
   \varsigma(x) = x +\Vr(x) + \int_0^x \Wr(t)^2 dt, \quad x\in[0,L_S], 
  \end{align}
  as well as its generalized inverse $\xi$ on $[0,\infty)$ via 
  \begin{align}\label{eqnIPxitrans}
   \xi(s) = \sup\left\lbrace x\in[0,L_S)\,|\, \varsigma(x)\leq s\right\rbrace, \quad s\in[0,\infty). 
  \end{align}
 Let us point out explicitly that $\xi(s)=L_S$ for $s\in[\varsigma(L_S),\infty)$ provided that $\varsigma(L_S)$ is finite.
  On the other side, if $\varsigma(L_S)$ is not finite, then $\xi(s)<L_S$ for every $s\in[0,\infty)$ but $\xi(s)\rightarrow L_S$ as $s\rightarrow\infty$.
  
  Clearly, the function $\varsigma$ is positive and strictly increasing. Moreover, it is continuous on $[0,L_S]$ except for possibly the set of points $\{x_k\}_{k=0}^{\kappa-1}$, where we have
 \be
 \varsigma(x_k+) - \varsigma(x_k) = \dip_k, \quad k\in\{0,\ldots,\kappa-1\}.
 \ee
 Furthermore, we have $\varsigma(x)=x$ on $[0,x_0]$ and 
\be
\varsigma(x) = x + \Vr_n + \sum_{k=0}^{n-1} l_{k+1} \Wr_k^2 + \Wr_n^2 (x-x_n),\quad x\in (x_n,x_{n+1}],~n\in\{0,\ldots,\kappa-1\}.
\ee
This readily entails that 
\begin{align}
\varsigma(x_n+) & = x_n + \Vr_n + \sum_{k=0}^{n-1} l_{k+1} \Wr_k^2, &
\varsigma(x_{n+1}) & = x_{n+1} + \Vr_n + \sum_{k=0}^{n} l_{k+1} \Wr_k^2,
\end{align}
for all $n\in\{0,\ldots,\kappa-1\}$. 
We now compute that 
\be
\xi(s) = \begin{cases} s, & s\in (0,x_0),\\ x_k, & s\in  (\varsigma(x_k),\varsigma(x_k+)), \\ x_k + \frac{s-\varsigma(x_k+)}{1+\Wr_k^{2}}, & s\in (\varsigma(x_k+),\varsigma(x_{k+1})), \\ L_S, & s\in(\varsigma(L_S),\infty),\end{cases}
\ee
and as a consequence, we see that 
\be
\xi'(s) = \begin{cases} 1, & s\in (0,x_0),\\ 0, & s\in  (\varsigma(x_k),\varsigma(x_k+)), \\ (1+\Wr_k^{2})^{-1}, & s\in (\varsigma(x_k+),\varsigma(x_{k+1})), \\ 0, & s\in(\varsigma(L_S),\infty).\end{cases}
\ee
Then the Hamiltonian $H$ can be given in terms of $\xi$ and $\Wr$ by  (see \cite[(6.10)]{IndefiniteString})
  \begin{align}\label{eqnHamequ}
   H(s) := \begin{pmatrix}1 - \xi'(s) & \xi'(s)\Wr(\xi(s)) \\ \xi'(s)\Wr(\xi(s)) & \xi'(s) \end{pmatrix}, \quad s\in[0,\infty).
  \end{align}
 With the considerations above, it is now not difficult to conclude that
 \be
 H(s) = \begin{cases} H_{\pi/2}, & s\in (0,x_0), \\ H_0, & s\in (\varsigma(x_k),\varsigma(x_k+)), \\ H_{\theta_k}, & s\in (\varsigma(x_k+),\varsigma(x_{k+1})), \\ H_0, & s\in(\varsigma(L_S),\infty), \end{cases}
 \ee 
  where $H_\theta$ is given by \eqref{eq:Htheta} and the $\theta_k$ are such that 
  \be
  \cot(\theta_k) = \Wr_k = \sum_{j=0}^{k} \omega_j. 
  \ee
   In conclusion, we have seen that the Hamiltonian $H$ corresponding to the Krein--Langer string $(L_S,\omega,\dip)$ is a Hamburger Hamiltonian with length $\varsigma(L_S)$.

\subsection{From Hamburger Hamiltonians to Krein--Langer strings} 
For the converse direction, let $H_{\cL,\Theta}$ be a Hamburger Hamiltonian \eqref{eq:Hambhamil} with coefficients satisfying \eqref{eq:normal1} as well as \eqref{eq:xlHamil} and denote with $(L_S,\omega,\dip)$ the corresponding generalized indefinite string. 
We first note that 
\be
H_{22}(x) = \sum_{k= 0}^N \id_{[x_{k-1},x_{k})}(x) \sin^2(\theta_k),\quad x\in [0,L).
\ee
and introduce the locally absolutely continuous, non-decreasing function $\xi$ by
  \begin{align}\label{eq:xiHamb01}
  \begin{split}
   \xi(x) &=\sum_{k= 0}^N \sin^2(\theta_k) \int_0^x \id_{[x_{k-1},x_{k})}(s)ds \\
   &=\sum_{x_{k-1}<x} \sin^2(\theta_k) \int_0^x \id_{[x_{k-1},x_{k})}(s)ds, \quad x\in [0,\infty).
   \end{split}
  \end{align} 
In particular, this reduces to 
\be\label{eq:xiHamb}
 \xi(x) = \sin^2(\theta_k)(x-x_{k-1}) + \sum_{j=0}^{k-1} \ell_j\sin^2(\theta_j),\quad x\in [x_{k-1},x_{k}),
\ee  
for all $k\in\{0,\ldots,N\}$ and we observe that
 \be
 \xi(x_{k-1}) = \sum_{j=0}^{k-1} \ell_j\sin^2(\theta_j),\quad k\in\{0,\ldots,N\}.
 \ee 
 It follows that the length of the generalized indefinite string $(L_s,\omega,\dip)$ is given by 
 \be
  L_S = \lim_{x\rightarrow\infty} \xi(x) = \sum_{k=0}^N \ell_k\sin^2(\theta_k). 
 \ee
The left-continuous generalized inverse $\varsigma\colon [0,L_S]\to [0,\infty]$ of $\xi$ satisfies 
\be
\varsigma(x) =  x_{k-1} + \frac{x-\xi(x_{k-1})}{\sin^2(\theta_k)} \in (x_{k-1},x_k)
\ee
for all $x\in (\xi(x_{k-1}), \xi(x_{k}))$ if $\sin(\theta_k)\neq 0$, and 
\begin{align}
 x_k = \varsigma(\xi(x_k)+) = \varsigma(\xi(x_k)) + \ell_k  = x_{k-1} + \ell_k
\end{align}
if $\sin(\theta_k)=0$. From this it follows that 
\begin{align}
H_{22}(\varsigma(x)) & = \sum_{j= 0}^N \id_{[x_{j-1},x_{j})}(\varsigma(x)) \sin^2(\theta_j) = \sin^2(\theta_k), \\
H_{12}(\varsigma(x)) & = \sum_{j= 0}^N \id_{[x_{j-1},x_{j})}(\varsigma(x)) \cos(\theta_j)\sin(\theta_j) = \cos(\theta_k)\sin(\theta_k),
\end{align}
if $x\in (\xi(x_{k-1}), \xi(x_{k}))$ for some $k\in\{0,\ldots,N\}$ with $\sin(\theta_k)\neq 0$. 
Thus, the normalized anti-derivative $\Wr$ of the distribution $\omega$ is given by 
\be
\Wr(x) = \frac{H_{12}(\varsigma(x))}{H_{22}(\varsigma(x))}   = \cot(\theta_k) 
\ee
when $x\in (\xi(x_{k-1}), \xi(x_{k}))$ for some $k\in\{0,\ldots,N\}$ with $\sin(\theta_k)\neq 0$. 
As a consequence, the distribution $\omega$ turns out to be a real-valued Borel measure on $[0,L_S)$: 
\begin{align}
\omega & = \mathop{\sum_{k=0}^{N-1}}_{\sin(\theta_k)\not=0}  \omega_k \delta_{\xi(x_k)}, & \omega_k & = \begin{cases} \cot(\theta_{k+1})-\cot(\theta_k), & \theta_{k+1}\not\in\pi\Z, \\ \cot(\theta_{k+2})-\cot(\theta_{k}), & \theta_{k+1}\in\pi\Z. \end{cases}
\end{align}
The positive Borel measure $\dip$ on $[0,L_S)$ is given via its distribution function by 
  \begin{align}
   \int_{[0,x)} \dip(ds) = \varsigma(x) - x - \int_0^x \Wr(t)^2dt, \quad x\in[0,L_S). 
  \end{align}
  If $x\in (\xi(x_{k-1}), \xi(x_{k}))$ for some $k\in\{0,\ldots,N\}$ with $\sin(\theta_k)\neq 0$, then we compute
  \be
    \int_{[0,x)} \dip(ds) = x_{k-1} - \xi(x_{k-1})   - \int_0^{\xi(x_{k-1})} \Wr(t)^2dt,
  \ee 
  from which we finally conclude that 
  \be
    \dip = \mathop{\sum_{k=0}^{N-1}}_{\sin(\theta_k)=0}  \ell_k \delta_{\xi(x_k)}. 
  \ee
  Thus, the generalized indefinite string $(L_S,\omega,\dip)$ corresponding to the Hamburger Hamiltonian $H_{\cL,\Theta}$ is a Krein--Langer string.

\end{document}